\newtheorem{lemma}{Lemma}[section]
\newtheorem{corollary}[lemma]{Corollary}
\newtheorem{theorem}[lemma]{Theorem}
\theoremstyle{definition}
\newtheorem{definition}{Definition}
\newtheorem{example}{Example}
\theoremstyle{remark}
\newtheorem{remark}{Remark}[section]
\let\ALP \mathbf
\let\FLD \mathcal
\newcommand\IND{\mathbb{I}}
\newcommand{\ind}[1]{\delta_{\left\{#1\right\}}}
\newcommand{\beq}[1]{\begin{eqnarray} #1 \end{eqnarray}}
\newcommand{\beqq}[1]{\begin{eqnarray*} #1 \end{eqnarray*}}
\renewcommand{\Re}{\mathbb{R}}
\newcommand{\Na}{\mathbb{N}}
\newcommand{\sgn}{\text{sgn}}
\newcommand{\ex}[1]{\mathbb{E}\left[#1\right]}
\newcommand{\ws}{\overset{w^*}{\rightharpoonup}}
\newcommand{\infnorm}[1]{\|#1\|_{\infty}}
\newcommand{\lf}[1]{\lim_{#1\rightarrow \infty}}
\newcommand{\lif}[1]{\underset{#1\rightarrow \infty}{\lim\inf}\:}
\journal{Journal of Mathematical Economics}
\begin{document}

\begin{frontmatter}



\title{The Topology of Information on the Space of Probability Measures over Polish Spaces}

\author[MB]{Martin Barbie}
\author[AG]{Abhishek Gupta}
\address[MB]{University of Cologne, Center for Macroeconomic Research, WiSo-Hochhaus, 7. Stock, Zi. 735, Albertus-Magnus-Platz, K\"{o}ln, Germany. \\ Email: \texttt{barbie@wiso.uni-koeln.de}\\[0.2cm]}
\address[AG]{Department of Aerospace Engineering, University of Illinois at Urbana-Champaign, Illinois, USA.\\ Email: \texttt{gupta54@illinois.edu}}
\begin{abstract}
We study here the topology of information on the space of probability measures over Polish spaces that was defined in \citet{hellwig1996}. We show that under this topology, a convergent sequence of probability measures satisfying a conditional independence property converges to a measure that also satisfies the same conditional independence property. This also corrects the proof of a claim in \citet[Lemma 4]{hellwig1996}. Additionally, we determine sufficient conditions on the Polish spaces and the topology over measures spaces under which a convergent sequence of probability measures is also convergent in the topology of information.
\end{abstract}

\begin{keyword}
Convergence of measures, Topology of information, Conditional independence, Optimization under uncertainty, Games with incomplete information
\end{keyword}
\end{frontmatter}

\section{Introduction}
\label{sec:intro}
Consider Polish spaces $\ALP A$, $\ALP B$ and $\ALP C$, with generic elements in the spaces denoted by $a$, $b$ and $c$, respectively. Let $\wp(\cdot)$  denote the space of probability measures over the space $(\cdot)$. If $\mu\in\wp(\ALP A\times\ALP B)$, then $\mu(\cdot|a)$ denotes the conditional measure on the space $\ALP B$ given an element $a\in\ALP A$.

Suppose $\{\mu_n\}_{n\in\Na}\subset\wp(\ALP A\times\ALP B\times\ALP C)$ be a converging sequence of measures such that $\mu_n$ converges to $\mu_0$ in some topology as $n\rightarrow\infty$, and for all $n\in\Na$, the conditional measure on the space $\ALP B$ given elements $a$ and $c$ is independent of $c$, that is, $\mu_n(db|a,c) =\mu_n(db|a)$ for $a\in\ALP A,c\in\ALP C$ $\mu_n$-almost everywhere (see Subsection \ref{subsec:condI} for a formal definition of conditional independence). A natural question that arises is whether the limit $\mu_0$ also satisfy this conditional independence condition, that is, does $\mu_0(db|a,c) =\mu_0(db|a)$ $\mu_0$-almost every $a\in\ALP A,c\in\ALP C$  hold? As we will soon see (by an example) in Section \ref{sec:toi}, if we endow the space of measures with the usual weak-* topology, then this conditional independence condition may not be satisfied in the limit. Thus, we must endow the space of measures with a stronger topology such that  if we take a convergence sequence (or 
net) of measures in that topology, then the conditional independence condition is maintained in the limit.

To see why a stronger topology is essential in certain problems, consider an optimization, a team, or a game problem, in which the actions of decision makers depend on their information. If a sequence of measures induced by the strategies of a decision maker is taken, then in the weak-* limit, the actions of the decision maker may become independent of the information of the decision maker, or may become dependent on some other random variables that are not observed by the decision maker. This may be unacceptable in many circumstances\footnote{The action of a decision maker becoming independent of the information in the limit is not necessarily troublesome; the decision maker can decide not to use the information while making a decision.}, as it may violate information or causality constraints of the problem. Due to this issue, several authors studying game or optimization problems have assumed specific structures on sequences of measures or corresponding conditional measures in order to ensure that the 
causality or the information constraints are not violated by the limiting measure (see, for example, \cite{jordan1977, milgrom1985, engl1995, jackson2012}, among several others). 

The failure of preserving causality or information constraints in the limit under usual topologies on measure spaces led \citet{hellwig1996} to define the {\it topology of convergence in information}\footnote{We prefer to use ``topology of information'' instead of {\it topology of convergence in information} for brevity.} on measure spaces, which, generally speaking, is stronger than the usual weak-* topology on the measure spaces. Under this stronger topology, a convergent sequence of measures preserves the conditional independence property in the limit. The purpose of this paper is twofold: (i) to study the structure of this new topology over measure spaces, and (ii) to identify sufficient conditions when convergence of a sequence of measures under any of the other well-known topologies on measure spaces imply convergence in the topology of information.

\subsection{Previous Work}
One of the first papers to make assumptions on a sequence of measures over general Polish spaces in order to preserve informational constraints in the limit is \citet{jordan1977}. The author considered an infinite horizon one-person discrete-time optimization problem in which the state of the nature evolved as time progressed, and the decision maker, at any time step, observed the realizations of the state until that time step and actions taken until the previous time step. The author studied the continuity properties of the value functions as a function of the distribution of the states of nature. In order to maintain the information and causality constraint, that is, the action at a time step must be a function of the past actions, and realizations of the past and the current states of the world, the author assumed that the conditional measure of the future states given the past and the current states is a continuous function of the realizations of the past and the current states. 

The continuity assumption on the conditional measures is restrictive, as pointed out by \citet{hellwig1996}. This motivated \citet{hellwig1996} to define the topology of information on the measure spaces, under which a convergent sequence of measures maintain the informational constraints in the limit. Further, using this topology, he obtained the continuity properties of the value function as a function of the distribution of exogenous states of nature variables and proved the existence and continuity of optimal strategies in infinite horizon optimization problems. There is a mistake in the crucial steps in the proof of one of the main results, Lemma 4, in \citet{hellwig1996}, which we also address and correct in this paper.

Independently, \citet{milgrom1985} considered a game of incomplete information where the type spaces and action spaces of the decision makers, respectively, are Polish spaces and compact metric spaces. They assumed certain absolute continuity condition on the joint measures over the product space of the type spaces of the decision makers. This absolute continuity assumption was crucial in showing that the limit of a weak-* convergent sequence of distributional strategies\footnote{A distributional strategy of a decision maker is the joint measure over the action and type spaces of a decision maker induced by an equivalence class of behavioral strategies of the decision maker. For a precise definition and details, the reader is referred to \citet{milgrom1985}.} retain informational constraints in the limit. \citet{engl1995} studied the continuity properties of Nash equilibrium correspondence, as a function of the joint measures over the type spaces (also known as beliefs), in games with incomplete information. 
The author assumed that the type space of the decision makers is countable and the beliefs on the type space converge in the topology of setwise convergence. Similar setups have been studied in \citet{kajii1998} and \citet{jackson2012} later on.

\subsection{Outline of the Paper}
The paper is organized as follows. We discuss some preliminary results in Section \ref{sec:prelim}. In Section \ref{sec:toi}, we motivate the discussions on why the topology of information is important, and then define the topology of information on the space of measures over the product of two Polish spaces. Section \ref{sec:main} is the main section of this paper, where we prove that a convergent sequence of measures in the topology of information maintains the conditional independence property in the limit and discuss how this result fixes the mistake in the proof of \citet[Lemma 4]{hellwig1996}. We study some topological properties of the topology of information in Section \ref{sec:top} and present an example that applies the concept of topology of information to show existence of an optimal solution to an optimization problem described in that section. Thereafter, we study the relation between the topology of information and other well-known topologies like weak-* topology, topology of setwise 
convergence and convergence assumptions made in the literature in Section \ref{sec:relation}. In particular, we identify certain sufficient conditions for a sequence of measures under which convergence in either topology imply convergence of that sequence in the topology of information. Finally, we conclude our discussion in Section \ref{sec:conclusion}.

\subsection{Notation}
Throughout the paper, we use the following notation. Let $\ALP X$ be a set and $X\subset\ALP X$ be a subset. Then, $X^\complement$ denotes the complement of the set $X$. Now, let $\ALP X$ be a topological space. The vector space of all bounded continuous functions on the topological space $\ALP X$ endowed with supremum norm $\infnorm{\cdot}$ is denoted by $C_b(\ALP X)$, that is, $C_b(\ALP X):=\{f:\ALP X\rightarrow \Re: f \text{ is continuous and }\infnorm{f}<\infty\}$. For a metric space $\ALP X$, we let $U_b(\ALP X)$ denote the vector space of all bounded uniformly continuous functions with the supremum norm.

We use $\FLD B(\ALP X)$ and $\FLD P(\ALP X)$ to denote, respectively, the Borel $\sigma$-algebra and the power set of a topological space $\ALP X$. For two topological spaces $\ALP X_1$ and $\ALP X_2$, $\FLD B(\ALP X_1)\otimes\FLD B(\ALP X_2)$ denotes the Borel $\sigma$-algebra generated by the set of sets $\{X_1\times X_2:X_1\in\FLD B(\ALP X_1), X_2\in\FLD B(\ALP X_2)\}$. 

The space of probability measures over the measurable space $(\ALP X,\FLD B(\ALP X))$ is denoted by $\wp(\ALP X)$. We let $\ind{\cdot}$ denote the Dirac measure over a point $\{\cdot\}$. Henceforth, we use $\wp_w(\ALP X)$ to denote the space of probability measures over the space $\ALP X$ endowed with the weak-* topology, which is defined to be the weakest topology such that the map $\mu\mapsto \int_{\ALP X} f\:d\mu$ is a continuous map for every $f\in C_b(\ALP X)$. If $\{\mu_{\alpha}\}\subset\wp(\ALP X)$ is a convergent net of measures converging to $\mu_0$ in weak-* topology, then we denote it by $\mu_\alpha\ws\mu_0$. For a measure $\mu\in\wp(\ALP X\times\ALP Y)$, $\mu^{\ALP X}\in\wp(\ALP X)$ denotes the marginal of $\mu$ onto the space $\ALP X$.

A Polish space is defined as a separable topological space which is completely metrizable. It is well known that the space of measures over Polish spaces with weak-* topology is also a Polish space (see \citet[p. 505]{ali2006} or \citet[Theorem 8.9.4, p. 213]{bogachev2006b}). Thus, if $\ALP A$ and $\ALP B$ are Polish spaces, then $\ALP A\times\ALP B$, $\wp_w(\ALP A)$, $\wp_w(\wp_w(\ALP A))$, $\wp_w(\ALP A\times\wp_w(\ALP B))$ are all Polish spaces. 

\section{Preliminaries}\label{sec:prelim}
Before we discuss the topology of information, we recall disintegration theorem for measures \citet[Theorem 5.3.1, p. 121]{amb2008}. The main statement of the theorem is that if $\ALP A$ and $\ALP B$ are Polish spaces and $\mu\in\wp(\ALP A\times\ALP B)$, then there exists a conditional measure on the space $\ALP B$ given an element $a\in\ALP A$. Thus, one can disintegrate a joint measure into a product of a conditional measure and a marginal. We state the following lemma without proof, the proof of which relies on the disintegration theorem.
\begin{lemma}[\citet{amb2008}]\label{lem:glue}
Let $\ALP A,\ALP B$ and $\ALP C$ be Polish spaces. Consider $\mu_1\in\wp(\ALP A\times\ALP B)$ and $\mu_2\in\wp(\ALP B\times\ALP C)$ such that $\mu^{\ALP B}_1= \mu^{\ALP B}_2$. Then, there exists a measure $\mu_0\in\wp(\ALP A\times\ALP B\times\ALP C)$ such that
\beq{\label{eqn:glue}\mu_0^{\ALP A\times \ALP B} = \mu_1,\quad \mu_0^{\ALP B\times\ALP C} = \mu_2.}
Moreover, $\mu_0$ is unique if either there exists a Borel measurable function $h_1:\ALP B\rightarrow\ALP A$ such that $\mu_1(da,db) = \ind{h_1(b)}(da)\mu^{\ALP B}_1(db)$, that is, $\mu_1(A\times B)=\int_B \delta_{\lbrace h_1(b)\rbrace} (A) \mu_1^{\ALP B}(db)$ for any $A\in \FLD B(\ALP A)$ and $B\in \FLD B(\ALP B)$, or there exists a Borel measurable function $h_2:\ALP B\rightarrow\ALP C$ such that $\mu_2(db,dc) = \ind{h_2(b)}(dc)$ $\mu^{\ALP B}_2(db)$.
\end{lemma}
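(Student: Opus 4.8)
The plan is to construct $\mu_0$ by disintegration. First I would apply the disintegration theorem \citep[Theorem 5.3.1]{amb2008} to $\mu_1\in\wp(\ALP A\times\ALP B)$ with respect to its marginal $\mu_1^{\ALP B}$, obtaining a Borel family $\{\mu_1(da\mid b)\}_{b\in\ALP B}$ of probability measures on $\ALP A$ so that $\mu_1(da,db)=\mu_1(da\mid b)\,\mu_1^{\ALP B}(db)$; likewise disintegrate $\mu_2\in\wp(\ALP B\times\ALP C)$ with respect to $\mu_2^{\ALP B}$ to get $\{\mu_2(dc\mid b)\}_{b\in\ALP B}$ with $\mu_2(db,dc)=\mu_2(dc\mid b)\,\mu_2^{\ALP B}(db)$. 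Since by hypothesis $\mu_1^{\ALP B}=\mu_2^{\ALP B}=:\nu$, both conditionals are defined $\nu$-a.e., and I can define $\mu_0$ on the Borel sets of $\ALP A\times\ALP B\times\ALP C$ by
\beq{\mu_0(A\times B\times C) := \int_B \mu_1(A\mid b)\,\mu_2(C\mid b)\,\nu(db),}
extending to all of $\FLD B(\ALP A)\otimes\FLD B(\ALP B)\otimes\FLD B(\ALP C)$ (which coincides with $\FLD B(\ALP A\times\ALP B\times\ALP C)$ for Polish spaces) by a standard monotone-class/Carathéodory argument; measurability in $b$ of the integrand follows from the joint measurability built into the disintegration theorem. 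Intuitively this is the measure that makes the $\ALP A$- and $\ALP C$-coordinates conditionally independent given $\ALP B$.

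Next I would verify the marginal identities \eqref{eqn:glue}. Taking $C=\ALP C$ gives $\mu_0(A\times B\times\ALP C)=\int_B \mu_1(A\mid b)\,\nu(db)=\mu_1(A\times B)$, so $\mu_0^{\ALP A\times\ALP B}=\mu_1$; symmetrically, taking $A=\ALP A$ yields $\mu_0^{\ALP B\times\ALP C}=\mu_2$. This is the routine part.

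For the uniqueness claim, suppose first that $\mu_1(da,db)=\ind{h_1(b)}(da)\,\nu(db)$ for a Borel $h_1:\ALP B\to\ALP A$; then $\mu_1(\cdot\mid b)=\delta_{h_1(b)}$ is $\nu$-a.e. uniquely determined. Let $\mu_0'$ be any measure satisfying \eqref{eqn:glue}. Disintegrating $\mu_0'$ over its $\ALP B$-marginal (which is $\nu$) gives a family $\{\mu_0'(da,dc\mid b)\}$ of measures on $\ALP A\times\ALP C$; pushing forward to $\ALP A$ recovers $\mu_1(\cdot\mid b)=\delta_{h_1(b)}$ $\nu$-a.e., which forces $\mu_0'(da,dc\mid b)=\delta_{h_1(b)}(da)\otimes\lambda_b(dc)$ for some kernel $\lambda_b$ on $\ALP C$, since a joint law whose first marginal is a point mass is necessarily a product. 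Pushing forward to $\ALP C$ and using $\mu_0'^{\ALP B\times\ALP C}=\mu_2$ identifies $\lambda_b=\mu_2(\cdot\mid b)$ $\nu$-a.e., so $\mu_0'=\mu_0$. The case where $\mu_2(db,dc)=\ind{h_2(b)}(dc)\,\nu(db)$ is symmetric.

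The main obstacle I expect is the measure-theoretic bookkeeping in the uniqueness step — specifically, justifying rigorously that a probability measure on a product of Polish spaces whose marginal on one factor is a Dirac mass must split as a product of that Dirac mass with the other marginal (this is where the Polish/standard-Borel hypothesis and the measurable-selection content of the disintegration theorem are genuinely used), and ensuring the "$\nu$-a.e." exceptional sets from the two disintegrations are handled consistently. The existence half, by contrast, is essentially a direct application of disintegration plus a monotone-class argument.
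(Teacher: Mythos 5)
Your proposal is correct, and it is essentially the argument of the reference the paper cites: the paper itself gives no proof of this lemma, deferring entirely to \citet[Lemma 5.3.2]{amb2008} (and \citet{dudley1999} for existence), and your construction $\mu_0(A\times B\times C)=\int_B\mu_1(A\mid b)\,\mu_2(C\mid b)\,\nu(db)$ is exactly the standard gluing construction used there, which also reappears in the paper as the map $\chi_1$ of Lemma \ref{lem:aux1}. The step you flag as the main obstacle is in fact elementary: a measure $\pi$ on $\ALP A\times\ALP C$ with $\pi^{\ALP A}=\delta_{a_0}$ is concentrated on $\{a_0\}\times\ALP C$, hence $\pi(A\times C)=\delta_{a_0}(A)\,\pi^{\ALP C}(C)$ directly, so no measurable-selection machinery is needed beyond the $\nu$-a.e.\ uniqueness of disintegrations.
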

\begin{proof}
For proof, the reader is referred to \citet[Lemma 5.3.2, pp. 122]{amb2008}. The first part is also proved in \citet[Theorem 1.1.10, p. 7]{dudley1999}.
\end{proof}

We use the following result on the convergence of marginals of measures.

\begin{lemma}[Convergence of Marginals]\label{lem:marg}
Let $\ALP X$ and $\ALP Y$ be Polish spaces. Suppose $\{\nu_n\}_{n\in\Na}\subset\wp_w(\ALP X\times\ALP Y)$ is a converging sequence of measures that converges to $\nu_0$ as $n\rightarrow\infty$ in weak-* topology. Define $\zeta_n(B) := \nu^{\ALP Y}_n(B) = \nu_n(\ALP X\times B)$ for all Borel sets $B\subset\ALP Y$ and $n\in\Na\cup\{0\}$. Then, $\lf{n}\zeta_n\ws\zeta_0$, that is, $\nu^{\ALP Y}_n$ converges to $\nu^{\ALP Y}_0$ in the weak-* topology.
\end{lemma}
\begin{proof}
This is a simple consequence of Lemma 5.2.1 in \citet[p. 118]{amb2008}.
\end{proof}

We now formally define conditional independence property of a measure in the next subsection.
\subsection{Conditional Independence}\label{subsec:condI}
We recall here the formal definition of conditional independence. Let $\mu\in \wp(\ALP A\times\ALP B\times\ALP C)$ be a probability measure. Let $\mu(\cdot\vert a)$ denote the regular conditional distribution of $\mu$ on $\ALP B\times \ALP C$ given $a\in \ALP A$. Then, the distribution $\mu$ is said to be conditionally independent given a point $a\in \ALP A$ if
\begin{align}\label{eqn:condI1}
   \mu(B\times C\vert a)=\mu(B\vert a) \mu(C\vert a)
\end{align}
for all $B\in \mathcal{B}(\ALP B)$, $C\in \mathcal{B}(\ALP C)$ and $\mu$-almost every $a\in \ALP A$. If $\mu(\cdot\vert a,c)$ denotes the regular conditional distribution of $\mu$ on $\ALP B$ given $a\in \ALP A$ and $c\in \ALP C$. Then, conditional independence of $\mu$ given $a$ is equivalent to
\begin{align}\label{eqn:condI2}
   \mu(B\vert a,c)=\mu(B\vert a)
\end{align}
for all $B\in \mathcal{B}(\ALP B)$, $\mu$-almost every $c\in \ALP C$ and $a\in \ALP A$ (see Lemma 2.7 in \citet{jordan1977} or any other probability textbook)\footnote{Jordan (1977) states things in terms of conditional independence from $\sigma$-algebras generated by random variables. These properties translate straightforwardly into the properties fro regular conditional distributions}. We will use the characterization of conditional given by (\ref{eqn:condI2}) in this paper. For condition (\ref{eqn:condI2}), we use the short-hand notation $\mu(db\vert a,c)=\mu(db\vert a)$.
\section{The Topology of Information}\label{sec:toi}
In this section, we present the definition of the topology of information on the space of measures over a product of two Polish spaces. This topology has been defined in \citet{hellwig1996}. For this topology, we answer the following question: If $\{\mu_n\}_{n\in\Na}\subset\wp(\ALP A\times\ALP B\times\ALP C)$ is a sequence of measures that converges to $\mu_0$ in some topology and each $\mu_n$ satisfies (\ref{eqn:condI1}) (or equivalently (\ref{eqn:condI2})), does (\ref{eqn:condI1}) also hold for $\mu_0$?

First, an example is presented that demonstrates that the usual weak-* topology does not retain conditional independence property in the limit if we consider a weak-* convergent sequence of measures.
\subsection{Motivation}
We first take a look at the following example.
\begin{example}\label{exm:ex2}
Let $\ALP A = \Re$, $\ALP B = \{-1,1\}$, $\ALP C = \Re$. In this example, $c$ is a bijective function of $b$, whereas $a$ is a noise corrupted version of $c$. Define $h_n:\ALP B\rightarrow\ALP C$ as $h_n(b) = b\left( 1+\frac{1}{n} \right)$. Let us define $\{\mu_n\}_{n\in\Na\cup\{0\}}\subset\wp(\ALP A\times\ALP B\times\ALP C)$ as
\beqq{\mu_n(da|b) &=& \frac{1}{2}\ind{h_n(b)+1}(da)+\frac{1}{2}\ind{h_n(b)-1}(da)\qquad \mu_n(dc|b) =\ind{h_n(b)}(dc), \\
\mu_0(da|b) &=& \frac{1}{2}\ind{b+1}(da)+\frac{1}{2}\ind{b-1}(da)\qquad\qquad\quad \mu_0(dc|b) =\ind{b}(dc),\\
\mu_n^{\ALP B} &=& \mu_0^{\ALP B} =  \frac{1}{2}\ind{-1}+\frac{1}{2}\ind{1}.}
Since $h_n(b)\rightarrow b$ as $n\rightarrow\infty$, we conclude that $\mu_n\ws \mu_0$. For every $n\in\Na$, we have
\beqq{\mu_n(db|a) &=& \ind{\text{sgn}(a)}(db) = \left\{\begin{array}{ll}
\ind{-1}(db) &\text{if } a\in\{-\frac{1}{n},-2-\frac{1}{n}\}\\
\ind{1}(db) &\text{if } a\in\{\frac{1}{n},2+\frac{1}{n}\}\\\end{array}\right.,\\
\mu_n(dc|a) &=& \ind{(1+\frac{1}{n})\text{sgn}(a)}(dc).}
This implies, given $a$, conditional independence holds. On the other hand, $\mu_0(db|a,c) = \ind{c}(db)\neq \mu_0(db|a)$ for $a=0$ as $\mu_0(db\vert a)=\frac{1}{2}\delta_{\lbrace -1\rbrace}+\frac{1}{2}\delta_{\lbrace 1\rbrace}$. In other words, given $a$, $b$ and $c$ are completely determined if the three-tuple $(a,b,c)$ are distributed according to measure $\mu_n,\: n\in\Na$, but the same does not hold if the they are distributed according to the measure $\mu_0$. Thus, the conditional independence property is lost in the limit. {\hfill $\Box$}  
\end{example}


The conditional independence property is crucial to show the existence of optimal strategies in the problems of optimization under uncertainty, where the decision makers have informational or causality constraints, such as the ones considered in \citet{jordan1977, hellwig1996, milgrom1985} and others. In such problems, if we take a convergent sequence of measures, then in the limit, we have to avoid situations where the control actions (lying in the space $\ALP C$) of the decision makers become independent of their information (lying in the space $\ALP A$) or dependent on some other random variables (lying in the space $\ALP B$) that they do not observe.

In the next subsection, we define the topology of information on probability measures over a product of two Polish spaces. The definition of this topology over measure spaces comes from \citet[p. 448]{hellwig1996}.  

\subsection{Topology of Information on Probability Measure Space}
We let $\wp_I(\ALP A\times\ALP B)$ denote the space of probability measures over the space $\ALP A\times\ALP B$ endowed with the topology of information. This is defined as follows: Let $\ALP N\subset \wp_w(\ALP A\times\wp_w(\ALP B))$ be the set of measures\footnote{We reserve this notation for the rest of this paper.} that are induced by measurable functions, that is, for every $\nu\in\ALP N$, there exists a measurable function $h_{\nu}:\ALP A\rightarrow\wp(\ALP B)$ such that $\nu(da,d\zeta) = \ind{h_{\nu}(a)}(d\zeta)\nu^{\ALP A}(da)$. Assume that $\ALP N$ is endowed with the subspace topology of $\wp_w(\ALP A\times\wp_w(\ALP B))$. There is a bijection, say $\psi:\wp(\ALP A\times\ALP B)\rightarrow\ALP N$, between the space of measures $\wp(\ALP A\times \ALP B)$ and $\ALP N$ which can be seen as follows: If $\mu\in\wp_I(\ALP A\times\ALP B)$, then there exists a unique measure $\nu\in\ALP N$ such that $\nu(da,d\zeta) = \ind{\mu(\cdot|a)}(d\zeta)\mu^{\ALP A}(da)$. Conversely, if $\nu\in\ALP N$, then there 
exists a unique $\mu\in\wp_I(\ALP A\times\ALP B)$ defined by $\mu(A\times B) =  \int_A \int_{\wp(\ALP B)} \zeta(B)\nu(da,d\zeta)$. Also recall that by the definition of conditional measure, the function that maps $a\mapsto\mu(\cdot|a)$ is a Borel measurable function from $\ALP A$ to $\wp_w(\ALP B)$. We now define the topology of information.
\begin{definition}[Topology of Information (\citet{hellwig1996})]\label{def:topinfo}
The topology of information is defined to be the coarsest topology on $\wp(\ALP A\times\ALP B)$, denoted by $\wp_I(\ALP A\times\ALP B)$, that makes the function $\psi:\wp_I(\ALP A\times\ALP B)\rightarrow\ALP N$ continuous. Thus, if a set $U\subset\wp_I(\ALP A\times\ALP B)$ is open, then there exists an open set $V\subset\ALP N$ such that $U=\psi^{-1}(V)$.
\end{definition}

The topology of information is stronger than the weak-* topology on the space of measures over the space $\ALP A\times\ALP B$, and we prove this later in Corollary \ref{cor:toiweak}. Since $\psi$ is one-to-one mapping, the space $\wp_I(\ALP A\times\ALP B)$ is homeomorphic to $\ALP N\subset\wp_w(\ALP A\times\wp_w(\ALP B))$ with $\psi$ and $\psi^{-1}$ being the homeomorphism between the spaces. This fact is a consequence of the following result.

\begin{lemma}
$\psi^{-1}:\ALP N\rightarrow\wp_I(\ALP A\times\ALP B)$ is continuous.
\end{lemma}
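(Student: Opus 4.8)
The plan is to show that $\psi^{-1}$ is continuous by exhibiting an explicit construction of $\psi^{-1}$ (equivalently $\psi$) in terms of weak-* continuous operations, so that continuity follows from standard facts about measurability of disintegrations and weak-* convergence. Concretely, given $\nu \in \ALP N$ with associated measurable map $h_\nu : \ALP A \to \wp(\ALP B)$, the measure $\mu = \psi^{-1}(\nu)$ is the ``barycenter in the second coordinate'', namely $\mu(A \times B) = \int_A \zeta_a(B)\,\nu^{\ALP A}(da)$ where $\zeta_a = h_\nu(a)$. Abstractly, $\mu$ is the pushforward under the evaluation-type map; more usefully, $\mu$ is characterized by $\int_{\ALP A \times \ALP B} f(a,b)\,\mu(da,db) = \int_{\ALP A \times \wp_w(\ALP B)} \left( \int_{\ALP B} f(a,b)\,\zeta(db) \right) \nu(da,d\zeta)$ for every $f \in C_b(\ALP A \times \ALP B)$.

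The key steps, in order, are as follows. First, I would fix $f \in C_b(\ALP A \times \ALP B)$ and define $F_f : \ALP A \times \wp_w(\ALP B) \to \Re$ by $F_f(a,\zeta) := \int_{\ALP B} f(a,b)\,\zeta(db)$, and argue that $F_f \in C_b(\ALP A \times \wp_w(\ALP B))$: boundedness is immediate from $\infnorm{f} < \infty$, and (sequential, hence, in the Polish setting, topological) continuity follows because if $(a_k,\zeta_k) \to (a,\zeta)$ then $f(a_k,\cdot) \to f(a,\cdot)$ uniformly on the relevant compacta while $\zeta_k \ws \zeta$, a routine $\varepsilon/3$ argument. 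Second, I would note that the identity displayed above shows $\int f\,d\mu = \int F_f\,d\nu$, i.e. writing $T : \ALP N \to \wp(\ALP A \times \ALP B)$ for the map $\nu \mapsto \mu$, we have $\int f\,d(T\nu) = \int F_f\,d\nu$ for all $f \in C_b(\ALP A \times \ALP B)$. Third, if $\nu_\alpha \ws \nu_0$ in $\ALP N$ (subspace topology of $\wp_w(\ALP A \times \wp_w(\ALP B))$), then since $F_f$ is bounded continuous on $\ALP A \times \wp_w(\ALP B)$ we get $\int F_f\,d\nu_\alpha \to \int F_f\,d\nu_0$, hence $\int f\,d(T\nu_\alpha) \to \int f\,d(T\nu_0)$ for every $f \in C_b(\ALP A \times \ALP B)$, which is exactly $T\nu_\alpha \ws T\nu_0$. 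Fourth, I would identify $T$ with $\psi^{-1}$ by checking that $T\nu$ indeed has the property that its disintegration at $a$ is $h_\nu(a)$ for $\nu^{\ALP A}$-a.e.\ $a$ (this is precisely the uniqueness statement baked into the definition via the bijection $\psi$), so that $T\nu \in \wp_I(\ALP A \times \ALP B)$ and $\psi(T\nu) = \nu$; together with the fact that by Definition \ref{def:topinfo} the topology on $\wp_I(\ALP A \times \ALP B)$ is initial with respect to $\psi$, continuity of $\psi^{-1}$ as a map into $\wp_I(\ALP A \times \ALP B)$ is equivalent to continuity of $\psi^{-1}$ followed by $\psi$ being trivial and $T = \psi^{-1}$ being weak-* continuous, which is what the previous steps established. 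Since all spaces in sight are Polish, it suffices to verify everything along sequences.

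The main obstacle I anticipate is the bookkeeping in the fourth step: one must be careful that the definition of $\wp_I(\ALP A \times \ALP B)$ as the coarsest topology making $\psi$ continuous does not automatically make $\psi^{-1}$ continuous (that would require $\psi$ to be a quotient map or open map), so the content here is genuinely that $\psi^{-1}$, viewed as a map into $\wp_w(\ALP A \times \ALP B)$, is continuous \emph{and} lands in the subspace $\wp_I$ in a way compatible with the initial topology — and the clean way to see this is that a map into a space with the initial topology induced by $\psi$ is continuous iff its composition with $\psi$ is continuous, reducing the claim to: $\psi \circ \psi^{-1} = \mathrm{id}_{\ALP N}$ is continuous, which is trivial. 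Wait — that reasoning shows $\psi^{-1}$ is \emph{always} continuous by abstract nonsense, so the real mathematical content, and hence the real obstacle, must be showing that the map $T$ I constructed genuinely equals $\psi^{-1}$, i.e.\ that $\psi$ is a bijection with the stated inverse formula and that $T$ takes values in $\wp(\ALP A \times \ALP B)$ with the correct conditional measures; this rests on the uniqueness clause of Lemma \ref{lem:glue} (applied with the $\ALP B$-marginal glued to a Dirac-type kernel) and on measurability of $a \mapsto \mu(\cdot \mid a)$, both of which are available. The secondary technical point — that $F_f$ is jointly continuous on $\ALP A \times \wp_w(\ALP B)$ — is standard but should be written out, since it is the analytic heart of the weak-* continuity of $T$.
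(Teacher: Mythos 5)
Your proposal is correct, but almost all of it is machinery the lemma does not need, and you in fact notice this yourself mid-argument. The paper's entire proof is the ``abstract nonsense'' you arrive at in your fourth step: since the topology on $\wp_I(\ALP A\times\ALP B)$ is by Definition \ref{def:topinfo} the initial topology induced by the bijection $\psi$, every open $U\subset\wp_I(\ALP A\times\ALP B)$ has the form $U=\psi^{-1}(V)$ with $V$ open in $\ALP N$, so $(\psi^{-1})^{-1}(U)=\psi(U)=V$ is open and $\psi^{-1}$ is continuous --- two lines, no analysis. Your worry that the ``real content'' is identifying your barycenter map $T$ with $\psi^{-1}$ is about the well-definedness of $\psi$ as a bijection, which the paper establishes in the paragraph preceding Definition \ref{def:topinfo} (via disintegration and the uniqueness clause of Lemma \ref{lem:glue}) and treats as given in this lemma. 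What your analytic construction actually proves is the genuinely stronger statement that $\psi^{-1}$ is continuous \emph{into the weak-* topology} on $\wp(\ALP A\times\ALP B)$: the map $\nu\mapsto T\nu$ together with the joint continuity of $F_f(a,\zeta)=\int f(a,b)\,\zeta(db)$ is precisely the content of Lemma \ref{lem:aux2} (the map $\chi_2$ and the lift $g\mapsto\bar g$), and the conclusion that weak-* convergence of $\psi(\mu_\alpha)$ forces $\mu_\alpha\ws\mu_0$ is Corollary \ref{cor:toiweak} (the topology of information is finer than weak-*). So your route buys a later result of the paper, at the cost of the $\varepsilon/3$ argument (which, as the paper's proof of Lemma \ref{lem:aux2} shows, also needs Prohorov tightness of $\{\zeta_k\}$ to control the integral off a compact set); for the lemma as stated, the initial-topology argument alone suffices and is what the paper uses.
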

\begin{proof}
Let $U\subset\wp_I(\ALP A\times\ALP B)$ be open. Then, $\psi(U)$ is open in $\ALP N$ by Definition \ref{def:topinfo}. Thus, $\psi^{-1}$ is continuous.
\end{proof}
The above result is also stated in \citet[p. 449]{hellwig1996}. In the next section, we show that the a convergent sequence of measures in the topology of information retains the conditional independence property in the limit. 

\section{Limit of Convergent Sequences in Topology of Information}\label{sec:main}
This section is devoted to prove the main result of this paper, that is, the topology of information retains the conditional independence condition in the limit. In order to show this, we first prove a few auxiliary results, that are used in the main result of this section, Theorem \ref{thm:main}. 

\subsection{Auxiliary Results}

We need the following definition to prove a few results later.

\begin{definition}\label{def:consistent}
Let $\ALP X$, $\ALP Y$ and $\ALP Z$ be Polish spaces and let $\mu_1\in\wp(\ALP X\times\ALP Y)$ and $\mu_2\in\wp(\ALP Y\times\ALP Z)$. We say that the measures $\mu_1$ and $\mu_2$ are consistent if and only if $\mu^{\ALP Y}_1 = \mu^{\ALP Y}_2$. {\hfill$\Box$}
\end{definition}

The next lemma discusses some properties of the function that glues two consistent probability measures in a specific manner.
\begin{lemma}\label{lem:aux1}
Let $\ALP X$, $\ALP Y$ and $\ALP Z$ be Polish spaces. Let $\ALP M\subset\wp_w(\ALP X\times\ALP Y)\times\wp_w(\ALP Y\times\ALP Z)$ be the set of all consistent measure pairs, and let $\tilde{\ALP M}\subset\ALP M$ be a tight set of consistent measure pairs. Define $\chi_1:\ALP M\rightarrow \wp_w(\ALP X\times\ALP Y\times\ALP Z)$ as
\beq{\label{eqn:chi1}\chi_1(\mu,\nu)(X\times Y\times Z) = \int_{Y} \mu(X|b)\nu(Z|b)\mu^{\ALP Y}(db) = \int_{Y\times Z} \mu(X|b)\nu(db,dc).}
for all Borel sets $X\subset\ALP X,Y\subset\ALP Y,Z\subset\ALP Z$. Then, 
\begin{enumerate}
\item $(\chi_1(\mu,\nu))^{\ALP X\times\ALP Y} = \mu $ and $(\chi_1(\mu,\nu))^{\ALP Y\times\ALP Z} = \nu$.
\item $\chi_1(\tilde{\ALP M})$ is a tight set of measures.
\item Let there exists a Borel measurable function $h_0:\ALP Y\rightarrow\ALP X$ such that $\mu_0(dx,dy) = \ind{h_0(y)}(dx)\mu^{\ALP Y}_0(dy)$. If $\{(\mu_n,\nu_n)\}_{n\in\Na}\subset\ALP M$ is a convergent sequence with the limit $(\mu_0,\nu_0)\in\ALP M$, then $\lf{n}\chi_1(\mu_{n},\nu_{n}) = \chi_1(\mu_{0},\nu_{0})$.
\end{enumerate}
\end{lemma}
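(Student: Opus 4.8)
The plan is to verify the three claims in order, with the bulk of the work concentrated in parts (2) and (3). For part (1), I would simply chase the defining identity \eqref{eqn:chi1}: taking $Y = \ALP Y$ and $Z = \ALP Z$ in the first integral expression gives $\chi_1(\mu,\nu)(X\times\ALP Y\times\ALP Z) = \int_{\ALP Y} \mu(X|b)\,\mu^{\ALP Y}(db) = \mu(X\times\ALP Y)$, and more carefully, integrating against a product set $X\times Y$ yields $\int_Y \mu(X|b)\mu^{\ALP Y}(db) = \mu(X\times Y)$ by the disintegration theorem; since product sets generate $\FLD B(\ALP X)\otimes\FLD B(\ALP Y)$ and both sides are measures, $(\chi_1(\mu,\nu))^{\ALP X\times\ALP Y} = \mu$. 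The identity $(\chi_1(\mu,\nu))^{\ALP Y\times\ALP Z} = \nu$ follows the same way from the second integral expression in \eqref{eqn:chi1}, using $\mu^{\ALP Y} = \nu^{\ALP Y}$ (consistency) so that $\int_{Y\times Z}\mu(\ALP X|b)\,\nu(db,dc) = \nu(Y\times Z)$.

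For part (2), I would use the standard tightness criterion (Prokhorov): given $\varepsilon > 0$, tightness of $\tilde{\ALP M}$ yields a single compact $K_X \times K_Y \subset \ALP X\times\ALP Y$ and $K_Y' \times K_Z \subset \ALP Y\times\ALP Z$ with $\mu((K_X\times K_Y)^\complement) < \varepsilon$ and $\nu((K_Y'\times K_Z)^\complement) < \varepsilon$ uniformly over $(\mu,\nu)\in\tilde{\ALP M}$. Then I would show $\chi_1(\mu,\nu)\big((K_X\times (K_Y\cap K_Y') \times K_Z)^\complement\big)$ is small uniformly: the complement is contained in the union of three "bad" cylinder sets, and using part (1), the $\ALP X\times\ALP Y$-marginal and $\ALP Y\times\ALP Z$-marginal bounds control all three. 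Since $K_X \times (K_Y\cap K_Y') \times K_Z$ is compact in $\ALP X\times\ALP Y\times\ALP Z$, this establishes tightness of $\chi_1(\tilde{\ALP M})$.

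For part (3), which I expect to be the main obstacle, the strategy is: by part (2) applied to a tight set containing $\{(\mu_n,\nu_n)\}$ (convergent sequences are tight), $\{\chi_1(\mu_n,\nu_n)\}$ is tight, hence relatively compact in $\wp_w(\ALP X\times\ALP Y\times\ALP Z)$; so it suffices to show every weak-* convergent subsequence has limit $\chi_1(\mu_0,\nu_0)$. Passing to such a subsequence with limit $\lambda$, parts (1) and Lemma \ref{lem:marg} force $\lambda^{\ALP X\times\ALP Y} = \mu_0$ and $\lambda^{\ALP Y\times\ALP Z} = \nu_0$. Now the hypothesis that $\mu_0$ is supported on the graph of a Borel map $h_0:\ALP Y\to\ALP X$ is exactly the uniqueness condition in Lemma \ref{lem:glue} (with the roles $\ALP A = \ALP X$, $\ALP B = \ALP Y$, $\ALP C = \ALP Z$, $h_1 = h_0$): there is a \emph{unique} measure on $\ALP X\times\ALP Y\times\ALP Z$ with marginals $\mu_0$ and $\nu_0$ on the respective coordinate pairs. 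Since $\chi_1(\mu_0,\nu_0)$ is such a measure by part (1), we get $\lambda = \chi_1(\mu_0,\nu_0)$. As the limit is the same along every convergent subsequence, the whole sequence $\chi_1(\mu_n,\nu_n)$ converges to $\chi_1(\mu_0,\nu_0)$. The delicate point to get right is verifying that $\chi_1(\mu_0,\nu_0)$ genuinely falls under the uniqueness clause of Lemma \ref{lem:glue} — i.e., that the graph-support hypothesis on $\mu_0$ is preserved in the form needed, which is immediate here since $\mu_0$ itself (not some perturbation) is assumed to have this structure — and ensuring the subsequence-extraction argument is airtight, i.e. that relative compactness plus a unique subsequential limit implies convergence.
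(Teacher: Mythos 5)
Your proposal is correct and follows essentially the same route as the paper's proof: part (1) by definition-chasing, part (2) by combining the marginal identities from part (1) with compact product sets furnished by tightness, and part (3) by tightness plus extraction of a convergent subsequence whose limit is pinned down via Lemma \ref{lem:marg} and the uniqueness clause of Lemma \ref{lem:glue} (the paper phrases this as a proof by contradiction, but the ``relatively compact with a unique subsequential limit'' argument you give is the same reasoning). No gaps.
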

\begin{proof}
See \ref{app:aux1}.
\end{proof}

This leads us to the following result, which is a corollary of the Lemma \ref{lem:aux1}.
\begin{corollary}\label{cor:cor1}
Let $\ALP M \subset \ALP N\times\wp_w(\ALP A\times\ALP C)$ be a set of consistent measure pairs. Then, $\varphi_1:\ALP M\rightarrow\wp_w(\ALP A\times\wp_w(\ALP B)\times\ALP C)$, defined in an identical fashion as $\chi_1$ in \eqref{eqn:chi1}, is a continuous function.
\end{corollary}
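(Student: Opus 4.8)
The plan is to deduce Corollary~\ref{cor:cor1} directly from Lemma~\ref{lem:aux1} by specializing the Polish spaces appropriately. Concretely, I would set $\ALP X = \ALP A$, $\ALP Y = \wp_w(\ALP B)$, and $\ALP Z = \ALP C$ in Lemma~\ref{lem:aux1}; this is legitimate because $\wp_w(\ALP B)$ is a Polish space whenever $\ALP B$ is, as recalled in the Notation subsection. With this identification, $\wp_w(\ALP X\times\ALP Y) = \wp_w(\ALP A\times\wp_w(\ALP B))$, and $\ALP N$ is by definition a subset of this space, while $\wp_w(\ALP Y\times\ALP Z) = \wp_w(\wp_w(\ALP B)\times\ALP C)$, which contains the relevant second coordinates. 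The map $\chi_1$ from Lemma~\ref{lem:aux1} then becomes exactly the map $\varphi_1$ described in the statement, landing in $\wp_w(\ALP A\times\wp_w(\ALP B)\times\ALP C)$.

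First I would observe that continuity of $\varphi_1$ is a \emph{sequential} continuity statement, and since all the spaces involved are Polish (hence metrizable), sequential continuity is equivalent to continuity; so it suffices to show that if $(\mu_n,\nu_n)\to(\mu_0,\nu_0)$ in $\ALP M$ then $\varphi_1(\mu_n,\nu_n)\to\varphi_1(\mu_0,\nu_0)$. This is precisely the content of part~(3) of Lemma~\ref{lem:aux1}, \emph{provided} the extra hypothesis of part~(3) is met: namely that $\mu_0$ (the first-coordinate limit, here an element of $\ALP N$) is of the form $\mu_0(dx,dy) = \delta_{\{h_0(y)\}}(dx)\,\mu_0^{\ALP Y}(dy)$ for some Borel measurable $h_0:\ALP Y\to\ALP X$. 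Here is where the specialization pays off: since $\ALP M\subset\ALP N\times\wp_w(\ALP A\times\ALP C)$ and $\ALP N$ is \emph{closed under the limit in question} — or at least the relevant limit $\mu_0$ lies in $\ALP N$ because $\ALP M$ is assumed to be a set of consistent pairs with first coordinate in $\ALP N$ — every $\mu\in\ALP N$ is by the very definition of $\ALP N$ induced by a measurable function $h_\nu:\ALP A\to\wp(\ALP B)$, i.e. $\mu(da,d\zeta) = \delta_{\{h_\nu(a)\}}(d\zeta)\,\mu^{\ALP A}(da)$. But caution: Lemma~\ref{lem:aux1}(3) requires the function to go from $\ALP Y=\wp_w(\ALP B)$ to $\ALP X=\ALP A$, i.e. a representation $\mu_0(da,d\zeta) = \delta_{\{h_0(\zeta)\}}(da)\,\mu_0^{\wp(\ALP B)}(d\zeta)$, which is the ``other side'' of the disintegration. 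The membership $\mu_0\in\ALP N$ gives the representation with the roles of $\ALP A$ and $\wp(\ALP B)$ reversed relative to what part~(3) literally asks.

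Consequently, I anticipate the main obstacle to be reconciling this directionality: I would either (a) check that Lemma~\ref{lem:aux1}(3) is in fact symmetric in the two ``graph'' representations — rereading its proof in \ref{app:aux1} to confirm that a Dirac representation on \emph{either} factor of $\mu_0$ suffices for the continuity conclusion (which is plausible given that Lemma~\ref{lem:glue} already phrases its uniqueness clause symmetrically in $h_1$ and $h_2$) — or (b) apply Lemma~\ref{lem:aux1} with $\ALP X=\wp_w(\ALP B)$, $\ALP Y=\ALP A$, $\ALP Z=\ALP C$ instead, so that the first coordinate $\mu_0$, as an element of $\ALP N$ written as $\delta_{\{h\}}(d\zeta)\mu^{\ALP A}(da)$, literally has the Dirac-graph form $\mu_0(d\zeta,da)=\delta_{\{h(a)\}}(d\zeta)\mu_0^{\ALP A}(da)$ demanded by part~(3), at the cost of composing with the obvious homeomorphism that swaps the first two coordinates of the product space. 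Either route reduces $\varphi_1$'s continuity to Lemma~\ref{lem:aux1}(3), and I would also invoke parts~(1)–(2) only insofar as needed to verify that $\varphi_1$ indeed maps into $\wp_w(\ALP A\times\wp_w(\ALP B)\times\ALP C)$ with the correct marginals. The remaining verifications — that the defining integral formula for $\varphi_1$ coincides with $\chi_1$ under the identification, and that consistency in $\ALP M$ translates to the consistency hypothesis $\mu_1^{\ALP Y}=\mu_2^{\ALP Y}$ of Lemma~\ref{lem:aux1} — are routine bookkeeping.
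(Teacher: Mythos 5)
Your route (b) is the correct one and is exactly what the paper does: its proof of this corollary is a one-line appeal to part~(3) of Lemma~\ref{lem:aux1}, with the implicit identification $\ALP X=\wp_w(\ALP B)$, $\ALP Y=\ALP A$, $\ALP Z=\ALP C$, under which the defining property of $\ALP N$ (every $\mu\in\ALP N$ is induced by a measurable $h_\mu:\ALP A\to\wp(\ALP B)$) supplies precisely the Dirac-graph hypothesis of part~(3), and sequential continuity suffices since all spaces are metrizable. One correction to your narrative: your initial identification $\ALP X=\ALP A$, $\ALP Y=\wp_w(\ALP B)$, $\ALP Z=\ALP C$ fails for a more basic reason than the directionality of the Dirac representation — the gluing variable $\ALP Y$ in Lemma~\ref{lem:aux1} must be the factor common to both measure spaces, namely $\ALP A$, and with $\ALP Y=\wp_w(\ALP B)$ the consistency condition $\mu_1^{\ALP Y}=\mu_2^{\ALP Y}$ is not even meaningful because the second coordinate lives in $\wp_w(\ALP A\times\ALP C)$ and has no $\wp_w(\ALP B)$-marginal; your claim that $\wp_w(\ALP Y\times\ALP Z)=\wp_w(\wp_w(\ALP B)\times\ALP C)$ ``contains the relevant second coordinates'' is therefore false. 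Since option~(b) repairs all of this at once, the alternative~(a) is unnecessary, and the rest of your bookkeeping (limits of sequences in $\ALP M$ have first coordinate in $\ALP N$, $\wp_w(\ALP B)$ is Polish) is exactly what is needed.
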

\begin{proof}
The proof follows from the third part of the result in Lemma \ref{lem:aux1}.
\end{proof}
The next lemma is also an important result. 
\begin{lemma}\label{lem:aux2}
Let $\ALP X$ and $\ALP Y$ be Polish spaces, and $\chi_2:\wp_w(\ALP X\times\wp_w(\ALP Y))\rightarrow\wp_w(\ALP X\times\ALP Y)$ be defined as
\beq{\label{eqn:chi2}\chi_2(\nu)(X\times Y) = \int_{X}\int_{\wp_w(\ALP Y)} \zeta(Y)\nu(dx,d\zeta)}
for all Borel sets $X\subset\ALP X,Y\subset\ALP Y$. Then, the following holds:
\begin{enumerate}
 \item For any bounded measurable function $g:\ALP X\times\ALP Y\rightarrow\Re$, define $\bar{g}:\ALP X\times\wp_w(\ALP Y)\rightarrow\Re$ as $\bar{g}(x,\zeta) = \int_{\ALP Y} g(x,y)\zeta(dy)$. We have
\beq{\label{eqn:bargxy}\int_{\ALP X\times\ALP Y}g(x,y)\chi_2(\nu)(dx,dy) = \int_{\ALP X\times\wp_w(\ALP Y)} \bar{g}(x,\zeta)\nu(dx,d\zeta).}
\item If $g$ is a bounded continuous function on its domain, then $\bar{g}$ is a bounded continuous function on its domain.
\item $\chi_2$ is a continuous function.
\end{enumerate}
\end{lemma}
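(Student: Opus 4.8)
The plan is to handle the three parts in order, using part (1) and part (2) to bootstrap part (3). For part (1), the natural approach is the standard measure-theoretic ladder: first verify \eqref{eqn:bargxy} for indicator functions $g = \IND_{X\times Y}$ with $X \subset \ALP X$, $Y \subset \ALP Y$ Borel, where it reduces exactly to the defining formula \eqref{eqn:chi2} for $\chi_2(\nu)$ since $\bar g(x,\zeta) = \zeta(Y)\IND_X(x)$. By linearity it then holds for simple functions, by monotone convergence for nonnegative bounded measurable $g$ (note $\bar g$ is well-defined and measurable in $(x,\zeta)$ because $\zeta \mapsto \int g(x,\cdot)\,d\zeta$ is measurable for fixed $x$, and one checks joint measurability via the same ladder), and finally for general bounded measurable $g$ by splitting into positive and negative parts. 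One subtlety to flag: I should first argue joint measurability of $(x,\zeta) \mapsto \bar g(x,\zeta)$, which again follows by the monotone class / functional monotone class theorem starting from products $g(x,y) = u(x)v(y)$.

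For part (2), suppose $g \in C_b(\ALP X \times \ALP Y)$. Boundedness of $\bar g$ is immediate since $|\bar g(x,\zeta)| \le \infnorm{g}$. For continuity, take a convergent net (or sequence, since all spaces are Polish) $(x_\alpha, \zeta_\alpha) \to (x,\zeta)$ in $\ALP X \times \wp_w(\ALP Y)$. I want $\int g(x_\alpha, y)\,\zeta_\alpha(dy) \to \int g(x,y)\,\zeta(dy)$. Split: $|\bar g(x_\alpha,\zeta_\alpha) - \bar g(x,\zeta)| \le \int |g(x_\alpha,y) - g(x,y)|\,\zeta_\alpha(dy) + |\int g(x,y)\,\zeta_\alpha(dy) - \int g(x,y)\,\zeta(dy)|$. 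The second term vanishes by weak-* convergence $\zeta_\alpha \ws \zeta$ since $y \mapsto g(x,y)$ is in $C_b(\ALP Y)$. For the first term, the honest way is to use that a weak-* convergent sequence of probability measures on a Polish space is tight (Prokhorov), so given $\varepsilon$ there is a compact $K \subset \ALP Y$ with $\zeta_\alpha(K^\complement) < \varepsilon$ for all $\alpha$; on the compact set $\{x_\alpha\} \cup \{x\} \times K$ (or rather using uniform continuity of $g$ restricted to a compact neighborhood of $x$ times $K$) one gets $\sup_{y \in K}|g(x_\alpha,y)-g(x,y)| \to 0$, and the contribution from $K^\complement$ is bounded by $2\infnorm{g}\varepsilon$.

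For part (3), I want to show $\chi_2$ is continuous as a map $\wp_w(\ALP X \times \wp_w(\ALP Y)) \to \wp_w(\ALP X \times \ALP Y)$. By the portmanteau characterization of weak-* convergence it suffices to show that for every $g \in C_b(\ALP X \times \ALP Y)$ the map $\nu \mapsto \int g\,d\chi_2(\nu)$ is continuous. But by part (1), $\int_{\ALP X \times \ALP Y} g\,d\chi_2(\nu) = \int_{\ALP X \times \wp_w(\ALP Y)} \bar g\,d\nu$, and by part (2), $\bar g \in C_b(\ALP X \times \wp_w(\ALP Y))$, so $\nu \mapsto \int \bar g\,d\nu$ is continuous by the very definition of the weak-* topology on $\wp_w(\ALP X \times \wp_w(\ALP Y))$. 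Hence $\chi_2$ is continuous. I expect part (2) --- specifically the first-term estimate requiring a tightness/uniform-continuity argument rather than a one-line appeal --- to be the main technical obstacle; parts (1) and (3) are then essentially bookkeeping once (2) is in hand.
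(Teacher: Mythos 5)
Your proposal is correct and follows essentially the same route as the paper's proof: the measurability/monotone-class ladder for part (1), the tightness-plus-uniform-continuity splitting for part (2), and the reduction of part (3) to the identity \eqref{eqn:bargxy} combined with $\bar g\in C_b(\ALP X\times\wp_w(\ALP Y))$. The only cosmetic difference is that the paper runs the monotone-class argument on sets (a Dynkin system of $D$ with $(x,\zeta)\mapsto\zeta(D_x)$ measurable, using the Portmanteau theorem for open rectangles) rather than on product functions $u(x)v(y)$, which changes nothing of substance.
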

\begin{proof}
See \ref{app:aux2}.
\end{proof}

\begin{corollary}\label{cor:cor2}
$\varphi_2:\wp_w(\ALP A\times\wp_w(\ALP B)\times\ALP C)\rightarrow\wp_w(\ALP A\times\ALP B\times\ALP C)$, defined in an identical fashion as $\chi_2$ in \eqref{eqn:chi2}, is a continuous function.
\end{corollary}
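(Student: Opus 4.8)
The plan is to reduce $\varphi_2$ directly to the map $\chi_2$ of Lemma \ref{lem:aux2} by grouping the spaces $\ALP A$ and $\ALP C$ together. Set $\ALP X := \ALP A\times\ALP C$ and $\ALP Y:=\ALP B$; since a finite product of Polish spaces is Polish, Lemma \ref{lem:aux2} applies verbatim to this choice and yields a continuous map $\chi_2:\wp_w(\ALP X\times\wp_w(\ALP Y))\to\wp_w(\ALP X\times\ALP Y)$. The only discrepancy between this $\chi_2$ and $\varphi_2$ is the ordering of coordinates: $\varphi_2$ acts on $\wp_w(\ALP A\times\wp_w(\ALP B)\times\ALP C)$ and lands in $\wp_w(\ALP A\times\ALP B\times\ALP C)$, whereas $\chi_2$ uses the grouped orderings $\ALP X\times\wp_w(\ALP Y)$ and $\ALP X\times\ALP Y$.

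I would handle this reordering with the canonical coordinate-permutation homeomorphisms $r:\ALP A\times\wp_w(\ALP B)\times\ALP C\to\ALP X\times\wp_w(\ALP Y)$, $r(a,\zeta,c)=((a,c),\zeta)$, and $s:\ALP A\times\ALP B\times\ALP C\to\ALP X\times\ALP Y$, $s(a,b,c)=((a,c),b)$. Pushforward of measures along a homeomorphism is a homeomorphism of the corresponding weak-* spaces: the pushforward is a bijection, it is weak-* continuous because $\int f\,d(g_\#\mu)=\int (f\circ g)\,d\mu$ sends $C_b$ test functions to $C_b$ test functions, and its inverse is the pushforward along the inverse homeomorphism. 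Denote by $r_\#$ and $s_\#$ the induced homeomorphisms on measure spaces.

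Next I would establish the identity $\varphi_2 = s_\#^{-1}\circ\chi_2\circ r_\#$. It suffices to check that both sides assign the same value to every measurable rectangle $A\times B\times C$ with $A\in\FLD B(\ALP A)$, $B\in\FLD B(\ALP B)$, $C\in\FLD B(\ALP C)$, since such rectangles form a $\pi$-system generating $\FLD B(\ALP A)\otimes\FLD B(\ALP B)\otimes\FLD B(\ALP C)$ and two probability measures agreeing on a generating $\pi$-system coincide. Unwinding the definitions of $r_\#$, $s_\#$, $\chi_2$, and the \emph{identical fashion} formula defining $\varphi_2$, both sides evaluate to $\int_{A\times C}\int_{\wp_w(\ALP B)}\zeta(B)\,\nu(da,d\zeta,dc)$; this is a routine change-of-variables computation. (Equivalently, one may verify the identity on test functions $f\in C_b(\ALP A\times\ALP B\times\ALP C)$ via part 1 of Lemma \ref{lem:aux2}.)

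Finally, $\varphi_2 = s_\#^{-1}\circ\chi_2\circ r_\#$ is a composition of continuous maps — $r_\#$ and $s_\#^{-1}$ are homeomorphisms, and $\chi_2$ is continuous by part 3 of Lemma \ref{lem:aux2} — hence continuous. I do not anticipate a genuine obstacle; the only points requiring a little care are the bookkeeping of coordinate orderings and the (standard) fact that pushforward along a homeomorphism is weak-* continuous with weak-* continuous inverse.
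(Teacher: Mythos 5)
Your proposal is correct and takes essentially the same route as the paper, which simply states that the corollary is a direct application of Lemma \ref{lem:aux2} with the roles $\ALP X=\ALP A\times\ALP C$ and $\ALP Y=\ALP B$; you have merely made explicit the coordinate-permutation bookkeeping that the paper leaves implicit.
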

\begin{proof}
This is a direct application of Lemma \ref{lem:aux2}.
\end{proof}

We have now proved all the auxiliary results to prove the main result of this paper in the next subsection.

\subsection{Main Result}
We show that when we take a convergent sequence of measures in the topology of information, then conditional independence property holds. 
\begin{theorem}
\label{thm:main}
Let $\ALP P\subset \wp_I(\ALP A\times\ALP B)\times\wp_w(\ALP A\times\ALP C)$ be a set of consistent measure pairs. Let $\varphi:\ALP P\rightarrow\wp_w(\ALP A\times\ALP B\times\ALP C)$ be defined as 
\beqq{\varphi(\mu,\nu)(A\times B\times C) = \int_{A\times C} \mu(db|a)\nu(da,dc).}
Then, $\varphi$ is a continuous function on $\ALP P$ and $\varphi(\mu,\nu)(db|a,c) = \varphi(\mu,\nu)(db|a)=\mu(db|a)$. 
\end{theorem}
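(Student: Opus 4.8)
The plan is to factor $\varphi$ through the intermediate space $\wp_w(\ALP A\times\wp_w(\ALP B)\times\ALP C)$ and realize it as the composition of the two maps $\varphi_1$ and $\varphi_2$ already shown to be continuous in Corollary~\ref{cor:cor1} and Corollary~\ref{cor:cor2}. Precisely, write $\mu\in\wp_I(\ALP A\times\ALP B)$ via its image $\psi(\mu)=:\tilde\mu\in\ALP N$, so that $\tilde\mu(da,d\zeta)=\ind{\mu(\cdot|a)}(d\zeta)\mu^{\ALP A}(da)$. The pair $(\tilde\mu,\nu)$ lies in the set of consistent pairs $\ALP M\subset\ALP N\times\wp_w(\ALP A\times\ALP C)$ because $\tilde\mu^{\ALP A}=\mu^{\ALP A}=\nu^{\ALP A}$ (the last equality is the consistency of $(\mu,\nu)\in\ALP P$ together with Lemma~\ref{lem:marg}). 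Then I would check the identity
\[
\varphi = \varphi_2 \circ \varphi_1 \circ (\psi\times\mathrm{id}),
\]
i.e. that $\varphi_2\bigl(\varphi_1(\psi(\mu),\nu)\bigr)=\varphi(\mu,\nu)$ as measures on $\ALP A\times\ALP B\times\ALP C$. This is a routine Fubini-type computation: $\varphi_1(\tilde\mu,\nu)$ is the measure on $\ALP A\times\wp_w(\ALP B)\times\ALP C$ given by $\int \ind{\mu(\cdot|a)}(d\zeta)\,\nu(da,dc)$ (using that $h_{\tilde\mu}(a)=\mu(\cdot|a)$ is the inducing function), and applying $\varphi_2$ replaces the $\zeta$-coordinate by integration $\zeta(B)$, yielding $\int \mu(B|a)\,\nu(da,dc)$, which is exactly $\varphi(\mu,\nu)$. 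Continuity of $\varphi$ on $\ALP P$ then follows because $\psi$ is continuous by Definition~\ref{def:topinfo}, $\mathrm{id}$ is continuous, $\varphi_1$ is continuous by Corollary~\ref{cor:cor1}, and $\varphi_2$ is continuous by Corollary~\ref{cor:cor2}; one only needs to note that $(\psi\times\mathrm{id})$ maps $\ALP P$ into the domain $\ALP M$ of $\varphi_1$, which is the consistency observation above.

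For the conditional-independence claim, I would argue directly from the formula $\varphi(\mu,\nu)(A\times B\times C)=\int_{A\times C}\mu(db|a)\,\nu(da,dc)$. First, taking $C=\ALP C$ gives $\varphi(\mu,\nu)^{\ALP A\times\ALP B}(A\times B)=\int_A \mu(B|a)\,\nu^{\ALP A}(da)=\int_A\mu(B|a)\,\mu^{\ALP A}(da)$, so the regular conditional distribution of $\varphi(\mu,\nu)$ on $\ALP B$ given $a$ is $\mu(\cdot|a)$ for $\mu^{\ALP A}$-a.e.\ $a$ (by uniqueness of disintegration). Next, I want to identify the regular conditional distribution of $\varphi(\mu,\nu)$ on $\ALP B$ given $(a,c)$. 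The candidate is again $\mu(\cdot|a)$, which does not depend on $c$; to verify it, it suffices to check that for all Borel $A,B,C$,
\[
\varphi(\mu,\nu)(A\times B\times C)=\int_{A\times C}\mu(B|a)\,\varphi(\mu,\nu)^{\ALP A\times\ALP C}(da,dc),
\]
and since $\varphi(\mu,\nu)^{\ALP A\times\ALP C}=\nu$ (obtained by taking $B=\ALP B$ in the defining formula, using $\mu(\ALP B|a)=1$), the right-hand side equals $\int_{A\times C}\mu(B|a)\,\nu(da,dc)$, which is precisely the defining formula. By the characterization of conditional independence in equation~\eqref{eqn:condI2} (uniqueness of regular conditional distributions), this shows $\varphi(\mu,\nu)(db|a,c)=\mu(db|a)=\varphi(\mu,\nu)(db|a)$, $\varphi(\mu,\nu)$-almost everywhere.

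The main obstacle, and the step that warrants the most care, is the verification of the composition identity $\varphi=\varphi_2\circ\varphi_1\circ(\psi\times\mathrm{id})$, since it requires matching the measure-on-measures bookkeeping: one must be careful that $\varphi_1$ (built as a $\chi_1$-type gluing over the $\ALP A$-coordinate of the pair $(\tilde\mu,\nu)$) does indeed reproduce, in the $\wp(\ALP B)$-slot, the Dirac mass at $\mu(\cdot|a)$ rather than some averaged object, which hinges on $\tilde\mu$ being induced by the measurable function $a\mapsto\mu(\cdot|a)$. A secondary point to handle cleanly is ensuring that the a.e.\ statements are with respect to the correct marginals throughout and that the disintegration uniqueness (Lemma~\ref{lem:glue}, or the standard disintegration theorem) is invoked legitimately — in particular that $\varphi(\mu,\nu)^{\ALP A\times\ALP C}=\nu$ so that conditioning on $(a,c)$ is well posed. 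Everything else is continuity of compositions and elementary integral manipulations.
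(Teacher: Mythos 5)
Your proposal is correct and follows essentially the same route as the paper: continuity via the factorization $\varphi=\varphi_2\circ\varphi_1\circ(\psi\times\mathrm{id})$ using Corollaries \ref{cor:cor1} and \ref{cor:cor2}, and conditional independence via the identity $\varphi(\mu,\nu)^{\ALP A\times\ALP C}=\nu$ together with the defining formula. The only cosmetic difference is that the paper verifies the disintegration identity by integrating against all $g\in C_b(\ALP A\times\ALP B\times\ALP C)$ (via the lift $\bar g$ of Lemma \ref{lem:aux2}), whereas you check it on measurable rectangles and invoke uniqueness of regular conditional distributions; both are standard and equivalent.
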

\begin{proof}
Note that $\varphi(\mu,\nu) = \varphi_2(\varphi_1(\psi(\mu),\nu))$, where $\varphi_1$ and $\varphi_2$ are defined in Corollaries \ref{cor:cor1} and \ref{cor:cor2}, respectively. It is clear that $\varphi$ is continuous since $\varphi_1,\varphi_2$ and $\psi$ are continuous functions on their domain. We now show that conditional independence property of $\varphi$. 

Let $g\in C_b(\ALP A\times\ALP B\times\ALP C)$ and $\bar{g}(a,\zeta,c):=\int_{\ALP B} g(a,b,c)\zeta(db)$. Then, $\bar{g}\in C_b(\ALP A\times\wp_w(\ALP B)\times\ALP C)$ by Lemma \ref{lem:aux2} Part 2, which further implies
\beqq{\int_{\ALP A\times\ALP B\times\ALP C}g(a,b,c)\varphi(\mu,\nu)(da,db,dc) &=& \int_{\ALP A\times\wp_w(\ALP B)\times\ALP C}\bar{g}(a,\zeta,c)\:\varphi_1(\psi(\mu),\nu)(da,d\zeta,dc),\\
&=& \int_{\ALP A\times\wp_w(\ALP B)\times\ALP C}\bar{g}(a,\zeta,c)\ind{\mu(\cdot|a)}(d\zeta)\nu(da,dc),\\
&=& \int_{\ALP A\times\ALP B\times\ALP C}g(a,b,c)\mu(db|a)\nu(da,dc).}
In the expressions above, the first equality follows from the definition of $\varphi_2$, the second equality follows from the definition of $\varphi_1(\psi(\cdot),\cdot)$ and the third equality follows from the definition of $\bar{g}$. On the other hand
\beqq{\int_{\ALP A\times\ALP B\times\ALP C}g(a,b,c)\varphi(\mu,\nu)(da,db,dc) &=& \int_{\ALP A\times\ALP B\times\ALP C}g(a,b,c)\varphi(\mu,\nu)(db|a,c)\nu(da,dc),}
which follows from the fact that $(\varphi(\mu,\nu))^{\ALP A\times\ALP C}=\nu$ by the definition of $\varphi_1$ and $\varphi_2$ in Corollaries \ref{cor:cor1} and \ref{cor:cor2}. Since the above equality holds for all continuous bounded functions on space $\ALP A\times\ALP B\times\ALP C$, we conclude that $\varphi(\mu,\nu)(db|a,c) = \varphi(\mu,\nu)(db|a)=\mu(db|a)$, and the proof of the theorem is complete.
\end{proof}

Now, if we take a sequence of measures $\{(\mu_n,\nu_n)\}_{n\in\Na}\subset \ALP P$ which converges to $(\mu_0,\nu_0)\in\wp_I(\ALP A\times\ALP B)\times\wp_w(\ALP A\times\ALP C)$, then $\varphi(\mu_0) = \lf{n}\varphi(\mu_n)$. Moreover, we also conclude that \[\varphi(\mu_0)(db|a,c) = \varphi(\mu_0)(db|a)=\mu_0(db|a).\] 
Thus, the conditional independence is retained. We also have the following corollary.
\begin{corollary}\label{cor:toiweak}
The topology of information is a stronger topology than the usual weak-* topology on the space of measures. Thus, the space of probability measure endowed with the topology of information is a Hausdorff space.
\end{corollary}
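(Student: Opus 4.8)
The plan is to prove Corollary \ref{cor:toiweak} by exhibiting the weak-* topology as a continuous image of the topology of information, i.e.\ by showing that the identity-like map from $\wp_I(\ALP A\times\ALP B)$ to $\wp_w(\ALP A\times\ALP B)$ is continuous, and that no coarser topology on the underlying set can be Hausdorff while still doing this. First I would observe that the canonical set-theoretic identity $\iota:\wp(\ALP A\times\ALP B)\to\wp(\ALP A\times\ALP B)$, when viewed as a map $\wp_I(\ALP A\times\ALP B)\to\wp_w(\ALP A\times\ALP B)$, factors as $\iota = \chi_2\circ\psi$, where $\psi:\wp_I(\ALP A\times\ALP B)\to\ALP N\subset\wp_w(\ALP A\times\wp_w(\ALP B))$ is the homeomorphism of Definition \ref{def:topinfo} and $\chi_2$ is the gluing map of Lemma \ref{lem:aux2} (with $\ALP X=\ALP A$, $\ALP Y=\ALP B$). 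Indeed, for $\mu\in\wp(\ALP A\times\ALP B)$ we have $\psi(\mu)(da,d\zeta)=\ind{\mu(\cdot|a)}(d\zeta)\mu^{\ALP A}(da)$, and plugging this into the definition \eqref{eqn:chi2} of $\chi_2$ gives $\chi_2(\psi(\mu))(A\times B)=\int_A \mu(B|a)\mu^{\ALP A}(da)=\mu(A\times B)$, so $\chi_2\circ\psi=\iota$ on the nose.

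Next I would invoke Lemma \ref{lem:aux2} Part 3, which asserts $\chi_2$ is continuous, together with the fact that $\psi$ is continuous by the very definition of the topology of information. The composition of continuous maps is continuous, so $\iota:\wp_I(\ALP A\times\ALP B)\to\wp_w(\ALP A\times\ALP B)$ is continuous; equivalently, every weak-* open subset of $\wp(\ALP A\times\ALP B)$ is open in the topology of information, which is exactly the statement that the topology of information is finer than (``stronger than'') the weak-* topology. For the Hausdorff claim, I would recall that $\wp_w(\ALP A\times\ALP B)$ is a Polish space (as noted in the Notation subsection for products of Polish spaces), hence Hausdorff; a topology that refines a Hausdorff topology is itself Hausdorff, since the separating open sets from the coarser topology remain open in the finer one. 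This completes the argument.

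The main subtlety — really the only place any care is needed — is verifying the factorization identity $\chi_2\circ\psi=\iota$, i.e.\ that the map $\chi_2$ recovers the joint measure from its disintegration-encoded image under $\psi$; but this is immediate from unwinding the two definitions as above, and in fact the inverse direction is already recorded in the discussion preceding Definition \ref{def:topinfo}, where $\mu(A\times B)=\int_A\int_{\wp(\ALP B)}\zeta(B)\,\nu(da,d\zeta)$ is exactly $\chi_2(\nu)(A\times B)$ for $\nu=\psi(\mu)$. I do not anticipate any genuine obstacle here: the corollary is a clean packaging of Lemma \ref{lem:aux2} Part 3 plus the elementary fact that refinements of Hausdorff topologies are Hausdorff, and the only thing to get right is making the composition $\varphi=\varphi_2\circ\varphi_1\circ\psi$ of Theorem \ref{thm:main} specialize correctly when $\ALP C$ is taken to be a one-point space, which collapses $\varphi$ to $\chi_2\circ\psi$ and reproduces the factorization above.
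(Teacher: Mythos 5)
Your proof is correct and follows essentially the same route as the paper, whose own argument simply specializes Theorem \ref{thm:main} to the case where $\ALP C$ is a one-point space --- precisely the collapse $\varphi=\chi_2\circ\psi$ that you identify at the end. Your direct factorization $\iota=\chi_2\circ\psi$ (using Lemma \ref{lem:aux2} Part 3 and the continuity of $\psi$), together with the observation that a topology refining a Hausdorff topology is Hausdorff, is exactly the content of the paper's two-line proof made explicit.
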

\begin{proof}
The statement follows from taking $\ALP C$ to be a one-point space in the result of Theorem \ref{thm:main}. Second statement follows immediately from the first statement of the corollary.
\end{proof}

\begin{corollary}\label{cor:main2}
Using the same notation as in Theorem \ref{thm:main}, consider $\mu\in\wp_I(\ALP A\times\ALP B)$ and $\{\nu_n\}_{n\in\Na}\subset\wp_w(\ALP A\times\ALP C)$ such that $\mu^{\ALP A} = \nu^{\ALP A}_n$ for all $n\in\Na$. If $\nu_n\ws\nu$ for some $\nu\in\wp_w(\ALP A\times\ALP C)$, then $\varphi(\mu,\nu_n)\ws\varphi(\mu,\nu)$.
\end{corollary}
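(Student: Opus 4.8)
The plan is to deduce Corollary \ref{cor:main2} directly from the continuity of $\varphi$ established in Theorem \ref{thm:main}, by checking that the hypotheses on the sequence $\{(\mu,\nu_n)\}$ place it inside the domain $\ALP P$ and that it converges there in the product topology. First I would observe that since $\mu^{\ALP A} = \nu_n^{\ALP A}$ for every $n$, each pair $(\mu,\nu_n)$ is a consistent measure pair in the sense of Definition \ref{def:consistent}, so $(\mu,\nu_n)\in\ALP P$ for all $n$. Next, I would identify the limit pair: by Lemma \ref{lem:marg} applied to $\nu_n\ws\nu$, the marginals converge, $\nu_n^{\ALP A}\ws\nu^{\ALP A}$; since $\nu_n^{\ALP A}=\mu^{\ALP A}$ is the constant sequence, its weak-* limit is $\mu^{\ALP A}$, and by uniqueness of weak-* limits (the underlying space is Polish, hence $\wp_w$ is Hausdorff) we get $\nu^{\ALP A}=\mu^{\ALP A}$. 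Therefore $(\mu,\nu)$ is also a consistent pair and lies in $\ALP P$.

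Having placed everything in $\ALP P$, the next step is to verify that $(\mu,\nu_n)\to(\mu,\nu)$ in the subspace product topology on $\ALP P\subset\wp_I(\ALP A\times\ALP B)\times\wp_w(\ALP A\times\ALP C)$. The first coordinate is the constant sequence $\mu$, which trivially converges to $\mu$ in $\wp_I(\ALP A\times\ALP B)$; the second coordinate converges to $\nu$ in $\wp_w(\ALP A\times\ALP C)$ by hypothesis. Hence the pair converges in the product topology, and restricting to the subspace $\ALP P$ does not change convergence of a net/sequence all of whose terms and whose limit lie in $\ALP P$. Now I would invoke the continuity of $\varphi$ on $\ALP P$ from Theorem \ref{thm:main}: continuity of a map between topological spaces sends convergent sequences to convergent sequences (sequential continuity follows from continuity), so $\varphi(\mu,\nu_n)\to\varphi(\mu,\nu)$ in $\wp_w(\ALP A\times\ALP B\times\ALP C)$, which is precisely $\varphi(\mu,\nu_n)\ws\varphi(\mu,\nu)$.

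I do not expect a serious obstacle here, since the corollary is essentially a specialization of Theorem \ref{thm:main} to a constant first coordinate. The one point that warrants a sentence of care is the identification $\nu^{\ALP A}=\mu^{\ALP A}$, which is needed to ensure $(\mu,\nu)\in\ALP P$ so that the statement $\varphi(\mu,\nu)$ even makes sense and Theorem \ref{thm:main} applies at the limit point; this is handled by Lemma \ref{lem:marg} together with Hausdorffness of $\wp_w(\ALP A)$ as noted above (or alternatively by Corollary \ref{cor:toiweak} if one prefers to cite the Hausdorff property of the information topology). A second minor point is that Theorem \ref{thm:main} is phrased in terms of continuity of $\varphi$ on $\ALP P$; since $\wp_w$ of a Polish space and its finite products are metrizable, ordinary continuity and sequential continuity coincide, so passing from "continuous" to "maps convergent sequences to convergent sequences" is immediate and needs no extra argument.
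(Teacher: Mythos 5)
Your proposal is correct and follows essentially the same route as the paper: both use Lemma \ref{lem:marg} to conclude $\nu^{\ALP A}=\mu^{\ALP A}$ so that the limit pair $(\mu,\nu)$ is consistent, and then apply the continuity of $\varphi$ from Theorem \ref{thm:main} to the pair sequence $(\mu,\nu_n)\to(\mu,\nu)$, with the Hausdorff/metrizability remarks playing the same role of pinning down the limits. Your write-up simply spells out the convergence in the product topology more explicitly than the paper's terse version.
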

\begin{proof}
First note that by Lemma \ref{lem:marg}, we know that $\nu^{\ALP A}= \mu^{\ALP A}$. Consequently, the tuple $(\mu,\nu)$ is a consistent pair of measures. Since $\wp_I(\ALP A\times\ALP B)$ is a Hausdorff space by Corollary \ref{cor:toiweak}, the statement follows. 
\end{proof}

Next example illustrates that if we use $\wp_w(\ALP A\times\ALP B)$ instead of $\wp_I(\ALP A\times\ALP B)$ in the statement of Theorem \ref{thm:main}, then the function $\varphi$ may not be continuous.

\begin{example}
Let $\ALP A=\left\lbrace 0,1,\frac{1}{2},\frac{1}{3},...\right\rbrace , \ALP B=\left\lbrace \underline{b},\overline{b}\right\rbrace, \ALP C=\left\lbrace \underline{c},\overline{c}\right\rbrace $, where $\ALP A$ is endowed with the subspace topology of the real line, and $\ALP B$ and $\ALP C$ is endowed with discrete topology. Consider two sequences $\{\mu_n\}_{n\in\Na} \subset \wp(\ALP A\times \ALP B)$ and $\{\nu_n\}_{n\in\Na}\subset \wp(\ALP A\times \ALP C)$ given by 
\beqq{\mu_n:=\frac{1}{2}\ind{(\frac{1}{n},\underline{b})}+\frac{1}{2}\ind{(0,\overline{b})}\qquad \nu_n = \frac{1}{2}\ind{(\frac{1}{n},\underline{c})}+\frac{1}{2}\ind{(0,\overline{c})}\qquad n\in\Na.}
Clearly, $\nu_n\ws\nu$ and $\mu_n\ws\mu$ in the weak-* topology, where $\mu:=\frac{1}{2}\ind{(0,\underline{b})}+\frac{1}{2}\ind{(0,\overline{b})}$ and $\nu := \frac{1}{2}\ind{(0,\underline{c})}+\frac{1}{2}\ind{(0,\overline{c})}$. Also, note that $\mu_n$ and $\nu_n$ have identical marginal distributions on $\ALP A$ for all $n\in\Na$. Let $f\in C_b(\ALP A\times \ALP B\times \ALP C)$ with $f(0,\overline{b},\underline{c})<f(0,\overline{b},\overline{c})$ and $f(0,\underline{b},\overline{c})
<f(0,\underline{b},\underline{c})$. Then we have
\beqq{\int f(a,b,c)d\varphi(\mu_n,\nu_n)=\int_{\ALP A\times \ALP C} \int_\ALP B f(a,b,c)\mu_n\left(db\vert a\right)\nu_n(da,dc) =\frac{1}{2} f\left(\frac{1}{n},\underline{b},\underline{c}\right)+\frac{1}{2} f(0,\overline{b},\overline{c})}
so that  this converges to $\frac{1}{2} f(0,\underline{b},\underline{c})+\frac{1}{2} f(0,\overline{b},\overline{c})$ as $n\rightarrow\infty$. On the other hand, we have
\beqq{ \int f(a,b,c)d\varphi(\mu,\nu) &=&\int_{\ALP A\times \ALP C} \int_\ALP B f(a,b,c)\mu\left(db\vert a\right) \nu(da,dc) \\
&=&\frac{1}{2} \left( \frac{1}{2}f(0,\underline{b},\underline{c})+\frac{1}{2}f(0,\overline{b},\underline{c})\right) +\frac{1}{2}\left( \frac{1}{2}f(0,\overline{b},\overline{c})+\frac{1}{2}f(0,\underline{b},\overline{c})\right).}
Given the assumptions on $f$, we have no equality. So, $\varphi(\nu_n,\mu_n)$ does not converge to $\varphi(\nu,\mu)$ as $n\rightarrow\infty$ in the weak-* topology.{\hfill$\Box$}
\end{example}

\subsection{Revisiting Example \ref{exm:ex2}}
Recall Example \ref{exm:ex2}. We show that the sequence of measures $\{\mu_n^{\ALP A\times\ALP B}\}_{n\in\Na}$ does not converge in the topology of information. We use here the same notation as in the example. First, note that
\beqq{\mu_n(db|a) = \ind{\sgn(a)}(db),\quad 
\mu_n^{\ALP A} = \sum_{i\in\{-1,1\}} \frac{1}{4} \Big(\ind{h_n(i)+1}+\ind{h_n(i)-1}\Big).}
We now show that the sequence $\{\mu_n^{\ALP A\times\ALP B}\}_{n\in\Na}$ does not converge to $\mu_0^{\ALP A\times\ALP B}$ in the topology of information. For any continuous function $f\in C_b(\ALP A\times\wp_w(\ALP B))$ such that $f(a,\ind{1})\neq f(a,\ind{-1})$, we get
\beqq{\int f(a,\zeta) \ind{\ind{\sgn(a)}}\mu_n^{\ALP A}(da)
= \sum_{i\in\{-1,1\}} \frac{1}{4} \Big(f(h_n(i)+1,\ind{h_n(i)+1}) +f(h_n(i)-1,\ind{h_n(i)-1})\Big).}
The right side of the equation above does not converge as $n\rightarrow\infty$, because $\sgn(h_n(1)-1) = \sgn(\frac{1}{n})$ and $\sgn(h_n(-1)+1) = -\sgn(\frac{1}{n})$ do not converge as $n\rightarrow\infty$. Furthermore, since $\ALP B = \{-1,1\}$, $f(a,\ind{0})$ is not well-defined. Using a similar approach as above, one can show that the sequence $\{\mu_n^{\ALP A\times\ALP C}\}_{n\in\Na}$ does not converge in the topology of information to $\mu_0^{\ALP A\times\ALP C}$. 

\begin{remark}
In the above setting, if either sequence $\{\mu_n^{\ALP A\times\ALP B}\}_{n\in\Na}$ or sequence $\{\mu_n^{\ALP A\times\ALP C}\}_{n\in\Na}$ converges in the topology of information, then the conditional independence property ($b$ and $c$ are conditionally independent given $a$) would hold. However, for the example we constructed, both sequences fail to converge in the topology of information, which implied that the conditional independence property failed to hold for the limit, $\mu_0$.{\hfill $\Box$}
\end{remark}

\subsection{Relation to \texorpdfstring{\citet{hellwig1996}}{TEXT}}
Our main result Theorem \ref{thm:main} can be used to give a correct proof of Lemma 4 in \citet{hellwig1996}. Hellwig considers a infinite-horizon sequential optimization problem. We present here a simpler version of the problem of Hellwig, and we refer the reader to \citet{hellwig1996} for a detailed description of the original model.

The basic structure of Hellwig's problem can be set up as follows: Let $\ALP A$ denote the set of exogenous states today, $\ALP B$ is the set of states tomorrow and $\ALP C$ is the set of actions chosen today. There is an exogenous probability distribution on the states of the world today and tomorrow given by $\nu\in \wp(\ALP A\times \ALP B)$. Any choice of the decision maker is represented by a probability measure on action and both the states, $\mu\in \wp(\ALP A\times \ALP B \times \ALP C)$. Note that $\mu$ is consistent with $\nu$ in the sense that $\mu^{\ALP A\times \ALP B}=\nu$. The choice of actions that the decision maker can take is restricted by  a correspondence $\beta(\nu)$, which assigns the possible choices of joint distributions of states and action for each exogenous measure $\nu$. Besides the technological constraints in $\beta$, it captures an informational constraint that the action chosen today cannot depend on the information (the state of the world) revealed tomorrow. Formally, this 
requires that conditioned on the state of the world today $a$, the action today $c$ and the state of the world 
tomorrow $b$ have to be conditionally independent. This means that for each $\mu\in \beta(\nu)$, we must have $\varphi(\nu,\mu^{\ALP A\times \ALP C})=\mu$, where $\varphi$ is defined in Theorem \ref{thm:main}. Lemma 4 in \citet{hellwig1996} aims to show that if the set of exogenous states is endowed with the topology of information, then the correspondence $\beta$ is upper-hemicontinuous. Since this topology is metrizable (see Lemma \ref{lem:met} below), the proof relies on the sequential characterization of upper-hemicontinuous correspondences.

If $\nu_n$ converges to $\nu$ in $\wp_I(\ALP A\times \ALP B)$, $\mu_n\in \beta(\nu_n)$, and $\mu_n\ws \mu$ to some $\mu\in\wp_w(\ALP A\times \ALP B\times \ALP C)$, it must be true that $\mu\in\beta(\nu)$. It is easy to verify the technological constraints imposed by $\beta$. However, in order to belong to $\beta(\nu)$,$\mu$ has to satisfy the conditional independence property. This follows now immediately from Theorem \ref{thm:main}, since $\mu_n=\varphi(\nu_n,\mu_n^{\ALP A\times \ALP C})\ws\varphi(\nu,\mu^{\ALP A\times \ALP C})$, so by the uniqueness of limits in the weak-* topology, $\mu=\varphi(\nu,\mu^{\ALP A\times \ALP C})$.

\citet{hellwig1996} instead tries to prove the preservation of conditional independence in the limit as follows: He considers $\psi(\nu)\in \ALP N\subseteq \wp_w(\ALP A\times \wp_w(\ALP B))$ and $\mu^{\ALP A\times \ALP C}$ on page 452 of his paper, applies the mapping $\varphi_1$ to these measures. He claims that (on p. 452) that 
\beqq{\varphi_1(\psi(\nu),\mu^{\ALP A\times \ALP C})(A\times B\times C) = \frac{\psi(\nu)(A\times B)\mu^{\ALP A\times \ALP C}(A\times C)}{\mu^{\ALP A}(A)}.}
for any $A\in \FLD B(\ALP A)$ with $\mu^{\ALP A}(A)>0$ and $B\in \FLD B(\wp_w(\ALP B))$ and $C\in \FLD B(\ALP C)$, and applies this equality to prove the conditional independence. However, the equality in the equation above is not true in general. We provide a counterexample to this claim now.

Consider $\ALP A = \{a_1,a_2\}, \:\ALP B =\{b_1,b_2\}$ and $\ALP C = \{c_1,c_2\}$. Now, define $h_1:\ALP A\rightarrow\ALP B$ and $h_2:\ALP A\rightarrow\ALP C$ as
\beqq{h_1(a_i) = b_i,\qquad h_2(a_i) = c_i,\quad i=1,2.}
Let $\nu\in\wp(\ALP A\times\ALP B)$ and $\mu\in\wp(\ALP A\times\ALP C)$ be probability measures, respectively, induced by functions $h_1$ and $h_2$, with the marginal on $\ALP A$ as the uniform distribution:
\beqq{\nu^{\ALP A}(da) = \mu^{\ALP A}(da) = \frac{1}{2}\ind{a_1}(da)+\frac{1}{2}\ind{a_2}(da).}
Consider $\psi(\nu)$, which assigns probabilities $\frac{1}{2}$ to $(a_1,\ind{b_1})$ and $(a_2,\ind{b_2})$. Now $\varphi_1(\psi(\nu),\mu)\in\wp_w(\ALP A\times\wp_w(\ALP B)\times\ALP C)$ is given by
\beqq{ \varphi_1(\psi(\nu),\mu)= \frac{1}{2} \ind{(a_1,\ind{b_1},c_1)}+\frac{1}{2} \ind{(a_2,\ind{b_2},c_2)}.}
Now, let $A = \ALP A,\: B = \lbrace\ind{b_1}\rbrace$ and $C = \{c_2\}$, we get
\beqq{\frac{\psi(\nu)(A\times B)\mu(A\times C)}{\nu^{\ALP A}(A)}=\frac{1}{2}\times\frac{1}{2} = \frac{1}{4},\\
\text{but}\qquad\varphi_1(\psi(\nu),\mu)(A\times B\times C) &=& 0,}
which is what we wanted to show. This completes the counterexample.
\section{Topological Properties of \texorpdfstring{$\wp_I(\ALP A\times\ALP B)$}{TEXT}}\label{sec:top}
In this section, we study a few topological properties of the space of measures endowed with topology of information. Since $\wp_I(\ALP A\times\ALP B)$ and $\ALP N$ are homeomorphic, we can show that $\wp_I(\ALP A\times\ALP B)$ is a metrizable separable space, which is also proved in \citet[Lemma 2, p. 449]{hellwig1996}.
\begin{lemma}\label{lem:met}
$\wp_I(\ALP A\times\ALP B)$ is a metrizable and separable space.   
\end{lemma}
\begin{proof}
Let $\rho_{(\cdot)}$ denote the metric on a metric space $(\cdot)$. For any $\mu_1,\mu_2\in \wp_I(\ALP A\times\ALP B)$, one can just the take metric 
\beqq{\rho_{\wp_I(\ALP A\times\ALP B)}(\mu_1,\mu_2) = \rho_{\ALP N}(\psi(\mu_1),\psi(\mu_2)).}
It is easy to verify that the above definition is indeed a metric on space $\wp_I(\ALP A\times\ALP B)$. Since $\ALP N$ is a subset of a separable space, we conclude that $\wp_I(\ALP A\times\ALP B)$ is also separable under this metric. This completes the proof of this lemma.
\end{proof}

However, it is easy to show that $\ALP N$ is not a closed subset of $\wp_w(\ALP A\times\wp_w(\ALP B))$\footnote{One can use a variation of Example \ref{exm:set} to show this fact.}. Thus, we cannot conclude that $\wp_I(\ALP A\times\ALP B)$ is complete under the metric defined in the proof above.


Corollary \ref{cor:toiweak} allows us to state the following result.

\begin{lemma}\label{lem:funcweak}
Let $\ALP T$ be a topological space. We use the same notation as in Theorem \ref{thm:main}.
\begin{enumerate}
\item Let $f:\wp_w(\ALP A\times\ALP B\times\ALP C)\rightarrow \ALP T$ be any function, and define $\bar{f}:\wp_I(\ALP A\times\ALP B)\times\wp_w(\ALP A\times\ALP C)\rightarrow \ALP T$ as $\bar{f}(\mu,\nu) = f(\varphi(\mu,\nu))$ for any $\mu\in\wp(\ALP A\times\ALP B)$ and $\nu\in\wp(\ALP A\times\ALP C)$. If $f$ is lower (resp. upper) semi-continuous, then $\bar{f}$ is lower (resp. upper) semi-continuous. Thus, if $f$ is continuous, then so is $\bar{f}$.
\item Let $c:\ALP A\times\ALP B\times\ALP C\rightarrow\Re\cup\{-\infty,\infty\}$ be a measurable function. Define $\bar{f}_c:\wp_I(\ALP A\times\ALP B)\times\wp_w(\ALP A\times\ALP C)\rightarrow\Re$ as $\bar{f}_c(\mu,\nu) = \int_{\ALP A\times\ALP B\times\ALP C} c\:d\varphi(\mu,\nu)$ for $\mu\in\wp(\ALP A\times\ALP B)$ and $\nu\in\wp(\ALP A\times\ALP C)$. If $c$ is lower semi-continuous and bounded from below, then $\bar{f}_c$ is a lower semi-continuous function. If $c$ is upper semi-continuous and bounded from above, then  $\bar{f}_c$ is an upper semi-continuous function. Thus, if $c$ is continuous and bounded, then so is $\bar{f}_c$.
\end{enumerate}
\end{lemma}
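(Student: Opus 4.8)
The plan is to reduce both parts to the continuity of the map $\varphi$ established in Theorem~\ref{thm:main}, composed with the hypotheses on $f$ (resp.\ $c$). For Part~1 this is essentially immediate: $\bar f = f\circ\varphi$, and $\varphi:\ALP P\to\wp_w(\ALP A\times\ALP B\times\ALP C)$ is continuous by Theorem~\ref{thm:main}. Since a composition of a continuous map followed by a lower semi-continuous map is lower semi-continuous (the preimage under $\varphi$ of the open set $\{f>t\}$ is open), and symmetrically for upper semi-continuity, Part~1 follows. I would simply spell out this one-line topological fact, noting that $\wp_I(\ALP A\times\ALP B)\times\wp_w(\ALP A\times\ALP C)$ carries the product topology and that $\ALP P$ is given the subspace topology, so continuity of $\varphi$ on $\ALP P$ is exactly what Theorem~\ref{thm:main} provides.

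For Part~2 the idea is to write $\bar f_c(\mu,\nu) = I_c(\varphi(\mu,\nu))$, where $I_c:\wp_w(\ALP A\times\ALP B\times\ALP C)\to\Re$ is the integration functional $I_c(\lambda) = \int c\,d\lambda$, and then invoke Part~1 with $f = I_c$. So it suffices to show that $I_c$ is lower semi-continuous on $\wp_w(\ALP A\times\ALP B\times\ALP C)$ when $c$ is lower semi-continuous and bounded below (the upper semi-continuous case is obtained by replacing $c$ with $-c$, and the continuous bounded case by combining the two). This is the standard portmanteau-type fact: a bounded-below lower semi-continuous function $c$ on a metric space is the pointwise supremum of an increasing sequence of bounded continuous (indeed Lipschitz) functions $c_k$ (e.g.\ the Moreau--Yosida/inf-convolution approximations $c_k(x)=\inf_y\{c(y)+k\,d(x,y)\}$), so $I_c(\lambda)=\sup_k I_{c_k}(\lambda)=\sup_k\int c_k\,d\lambda$ by monotone convergence, and each $I_{c_k}$ is continuous on $\wp_w$ by definition of the weak-* topology; a supremum of continuous functions is lower semi-continuous. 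I would cite a standard reference (e.g.\ \citet{ali2006} or \citet{bogachev2006b}) for this rather than reproving it.

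The one genuine subtlety to flag is well-definedness and finiteness: if $c$ is only bounded below (and possibly $+\infty$-valued), then $\int c\,d\varphi(\mu,\nu)$ is a well-defined element of $(-\infty,+\infty]$, so strictly speaking $\bar f_c$ takes values in $\Re\cup\{+\infty\}$ rather than $\Re$; the statement as written should be read with that convention (lower semi-continuity is still the right notion there). Similarly in the upper semi-continuous case the integral lives in $[-\infty,+\infty)$. In the continuous bounded case there is no issue and $\bar f_c$ is genuinely real-valued and continuous. I do not expect any real obstacle here — the whole lemma is a routine consequence of Theorem~\ref{thm:main} plus the classical semicontinuity of weak-* integration functionals — so the write-up can afford to be brief, with the approximation argument for $I_c$ either sketched in two lines or outsourced to a citation.
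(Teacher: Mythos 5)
Your proposal is correct and follows essentially the same route as the paper: Part 1 is the composition of the continuous map $\varphi$ from Theorem \ref{thm:main} with the semi-continuous $f$, and Part 2 reduces to Part 1 via the lower semi-continuity of the integration functional $\lambda\mapsto\int c\,d\lambda$ on $\wp_w(\ALP A\times\ALP B\times\ALP C)$, which the paper simply cites as Lemma 4.3 of \citet{villani2009} and you sketch via inf-convolution approximations. Your remark on the extended-real-valued convention when $c$ is unbounded above is a reasonable clarification but does not alter the argument.
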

\begin{proof}
The proof of Part 1 follows from Theorem \ref{thm:main} and Corollary \ref{cor:toiweak}. Part 2 follows immediately from Part 1 of the result along with Lemma 4.3 of \citet[p. 43]{villani2009}.  
\end{proof}
We now present an example below that applies the result of Lemma \ref{lem:funcweak} to prove the existence of an optimal solution to an optimization problem. The proof technique adopted in the following example can be extended to a game of incomplete information or any sequential optimization with perfect recall.
\begin{example}\label{exm:opt}
Let $\ALP B$ be the state space of the world, $\ALP A$ be the observation space of a decision maker and $\ALP C$ be the decision space of the decision maker. Assume that $\ALP C$ is a compact space and $c:\ALP A\times\ALP B\times\ALP C\rightarrow[0,\infty)$ is a continuous function. Further, assume that observation $a$ and state $b$ are correlated with each other and let $\mu\in\wp(\ALP A\times\ALP B)$ denote the joint probability measure over the observation space and the state space. Let $\Gamma$ denote the space of all measurable functions $\gamma:\ALP A\rightarrow\ALP C$. The question now is that under what conditions, there exists a measurable function $\gamma^\star\in\Gamma$ such that the following holds
\beq{\label{eqn:cabc} \ex{c(a,b,\gamma^\star(a))} = \inf_{\gamma\in\Gamma} \ex{c(a,b,\gamma(a))} :=  \inf_{\gamma\in\Gamma}\int_{\ALP A\times\ALP B} c(a,b,\gamma(a))\mu(da,db).}
We now show that the aforementioned optimization problem admits an optimal solution, thereby showing the existence of an optimal $\gamma^\star$. We show the existence result in four steps.


{\it Step 1:} Let us first expand the strategy space of the decision maker to include all randomized strategies as well. Thus, each decision maker decides on a conditional measure $\nu(dc|a)$ such that the measure on $\ALP A\times\ALP C$ is given by $\nu(da,dc):=\nu(dc|a)\mu^{\ALP A}(da)$, and let $\ALP P\subset\wp_w(\ALP A\times\ALP C)$ denote the set of all such $\nu$. It is immediate that $\ALP P$ is a tight and weak-* closed set of measures, thus weak-* compact. Since the space $\ALP P$ subsumes the measures induced by strategies in $\Gamma$, we have
\beq{\label{eqn:infNG}\inf_{\tilde \nu\in\ALP P} \int c(a,b,c)\tilde \nu(dc|a)\mu(da,db)\leq\inf_{\gamma\in\Gamma}\int_{\ALP A\times\ALP B} c(a,b,\gamma(a))\mu(da,db).}

{\it Step 2:} Let us endow the space of measures over $\ALP A\times\ALP B$ with the topology of information. Consider the sequence $\{\mu_n\}_{n\in\Na}$, defined by $\mu_n=\mu$ for all $n\in\Na$. Since $\wp_I(\ALP A\times\ALP B)$ is a metric space, the sequence $\{\mu_n\}_{n\in\Na}$ converges to $\mu$ in the topology of information. 


{\it Step 3:} Now, consider a sequence $\{\nu_n\}_{n\in\Na}\subset\ALP P$ satisfying
\beqq{\int c(a,b,c)\nu_n(dc|a)\mu(da,db)< \inf_{\tilde \nu\in\ALP P} \int c(a,b,c)\tilde \nu(dc|a)\mu(da,db)+\frac{1}{n}.}
Since $\ALP P$ is weak-* compact, there exists a convergent subsequence, say $\{\nu_{n_k}\}_{k\in\Na}\subset \{\nu_n\}_{n\in\Na}$, such that it converges to some $\nu^\star\in\ALP P$. A consequence of this result is that $\varphi(\mu_{n_k},\nu_{n_k})\ws \varphi(\mu,\nu^\star)$ as $k\rightarrow\infty$. Since $c$ is continuous and bounded from below, applying the result of Lemma \ref{lem:funcweak} Part 2, we conclude that 
\beqq{\int c(a,b,c)\nu^\star(dc|a)\mu(da,db)\leq  \lif{n}\int c(a,b,c) \nu_n(dc|a)\mu(da,db),}
which further implies
\beqq{\int c(a,b,c)\nu^\star(dc|a)\mu(da,db)= \inf_{\tilde \nu\in\ALP P} \int c(a,b,c)\tilde \nu(dc|a)\mu(da,db).}
Hence, we know that there exists an optimal randomized strategy of the decision maker.

{\it Step 4:} Now, we can apply {\it Blackwell's principle of irrelevant information} (see \citet{blackwell1964}, \citet[p. 457]{yukselbook} for details) to conclude that there exists a measurable function, say $\gamma^\star:\ALP A\rightarrow\ALP C$, such that 
\beqq{\int c(a,b,\gamma^\star(a))\mu(da,db)=\int c(a,b,c)\nu^\star(dc|a)\mu(da,db),}
which, together with \eqref{eqn:infNG}, completes the proof of existence of an optimal solution to the optimization problem posed in \eqref{eqn:cabc}.
{\hfill$\Box$}
\end{example}

\begin{remark}
Instead of formulating the optimization problem over the space $\wp_I(\ALP A\times\ALP B)\times\wp_w(\ALP A\times\ALP C)$ in the example above, if we had formulated it over the space $\wp_w(\ALP A\times\ALP B\times\ALP C)$, then we could show that there exists a $\lambda^\star\in \wp_w(\ALP A\times\ALP B\times\ALP C)$ such that
\beqq{\int c\: d\lambda^\star= \inf_{\tilde \lambda\in\wp_w(\ALP A\times\ALP B\times\ALP C)} \int c\:d\tilde \lambda}
using similar arguments as above, but to show the conditional independence property (state and action are independent given the observation) of the limiting measure, we will have to use a similar approach as used in Lemmas \ref{lem:aux1} and \ref{lem:aux2}. Thus, formulating the optimization problem over a product space $\wp_I(\ALP A\times\ALP B)\times\wp_w(\ALP A\times\ALP C)$ that uses topology of information makes it easier to show the conditional independence property  of the limiting decision strategy.{\hfill$\Box$}
\end{remark}

The example stated above also shows how to apply topology of information to solve optimization problems. A similar approach, with certain modifications, can be used to analyze game problems. In the next section, we study the relation between other well-known topologies over measure spaces and the topology of information.

\section{Relation to Other Topologies on Measure Spaces}\label{sec:relation}
In this section, we show that under some conditions, convergence of a sequence of measures in some well-known topologies on the space of measures - weak-* topology, topology of setwise convergence, and the norm topology (the topology induced by total variation norm), implies convergence of that sequence in the topology of information. First, we state definitions of setwise convergence and convergence in total variation of a sequence of measures for a Borel space $(\ALP X,\FLD B(\ALP X))$.
\begin{definition}[Setwise Convergence of measures]
A sequence of measures $\{\nu_n\}_{n\in\Na}$ over the space $\ALP X$ is said to converge setwise to a measure $\nu_0$ if
\beqq{\lf{n}\nu_n(X) = \nu_0(X)}
for every measurable set $X\subset\ALP X$.{\hfill$\Box$}
\end{definition}
\begin{definition}[Convergence of measures in total variation norm]
A sequence of measures $\{\nu_n\}_{n\in\Na}$ over the space $\ALP X$ is said to converge in total variation to a measure $\nu_0$ if
\beqq{\lf{n}\|\nu_n-\nu_0\|_{TV} = 0,}
where the total variation norm of any countably additive signed measure $\nu$ is defined to be
\beqq{\|\nu\|_{TV} := \sup_{f:\ALP X\rightarrow[-1,1]} \int_{\ALP X} f\:d\nu,}
where the supremum is taken over all functions $f$ that are Borel measurable.{\hfill$\Box$}
\end{definition}

In the next few subsections, we identify certain sufficient conditions on the sequences of measures, such that if the sequence converges under some topology, then it implies that the sequence also converges in the topology of information. 
 
\subsection{Relation to the Weak-* Topology}
Example \ref{exm:ex2} is an example of the case where weak-* convergence of a sequence of measures does not imply convergence of that sequence in the topology of information. Thus, it is clear that the notion of weak-* convergence is not sufficient to guarantee conditional independence property of limiting measures. However, under some restrictive assumptions, weak-* convergence implies convergence in the topology of information. Our next two results identify two such sets of conditions.

\begin{theorem}\label{thm:wstoi}
Let $\ALP X$ be a Polish space and $\ALP Y$ be a locally compact Polish space. Consider a sequence $\lbrace \mu_n\rbrace_{n\in\Na}$ of probability measures on $\ALP X\times \ALP Y$ such that each $\mu_n$ has a (measurable) density $f_n$ with respect to $\mu^{\ALP X}_n\otimes\mu^{\ALP Y}_n$. Further, assume that (i) $\{\mu_n\}_{n\in\Na}$ converges to $\mu$ in the weak-* topology, (ii) $\mu$ has a continuous density $f$ with respect to $\mu^{\ALP X}\otimes\mu^{\ALP Y}$, and (iii) $f_n$ converges uniformly to $f$ on each compact subset of $\ALP X\times \ALP Y$ as $n\rightarrow\infty$. Then, $\lbrace \mu_n\rbrace_{n\in\Na}$ converges to $\mu$ as $n\rightarrow\infty$ in $\wp_I(\ALP X\times\ALP Y)$. 
\end{theorem}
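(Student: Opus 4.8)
The plan is to reduce everything to a statement about weak-$*$ convergence of the images $\psi(\mu_n)$ in $\wp_w(\ALP X\times\wp_w(\ALP Y))$, and then to establish that convergence via an almost-sure representation. By Definition \ref{def:topinfo} and the fact that $\psi$ is a homeomorphism onto $\ALP N$, the conclusion $\mu_n\to\mu$ in $\wp_I(\ALP X\times\ALP Y)$ is equivalent to $\psi(\mu_n)\ws\psi(\mu)$. Two preliminary observations set the stage. First, by Lemma \ref{lem:marg}, $\mu_n^{\ALP X}\ws\mu^{\ALP X}$ and $\mu_n^{\ALP Y}\ws\mu^{\ALP Y}$. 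Second, integrating the density identity $\mu_n=f_n\cdot(\mu_n^{\ALP X}\otimes\mu_n^{\ALP Y})$ over $\ALP Y$ shows $\int_{\ALP Y}f_n(x,y)\,\mu_n^{\ALP Y}(dy)=1$ for $\mu_n^{\ALP X}$-a.e.\ $x$, so by essential uniqueness in the disintegration theorem $\mu_n(dy\vert x)=f_n(x,y)\,\mu_n^{\ALP Y}(dy)$ for $\mu_n^{\ALP X}$-a.e.\ $x$; likewise $\mu(dy\vert x)=f(x,y)\,\mu^{\ALP Y}(dy)$ for $\mu^{\ALP X}$-a.e.\ $x$.

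Next I would apply the Skorokhod representation theorem to $\mu_n^{\ALP X}\ws\mu^{\ALP X}$ on the Polish space $\ALP X$: there is a probability space carrying $\ALP X$-valued random elements $\xi_n,\xi$ with laws $\mu_n^{\ALP X},\mu^{\ALP X}$ and $\xi_n\to\xi$ almost surely. Put $\Theta_n:=\mu_n(\cdot\vert\xi_n)$ and $\Theta:=\mu(\cdot\vert\xi)$, which are $\wp_w(\ALP Y)$-valued random elements because $x\mapsto\mu_n(\cdot\vert x)$ is Borel. By construction the pair $(\xi_n,\Theta_n)$ has law $\psi(\mu_n)$ and $(\xi,\Theta)$ has law $\psi(\mu)$, so it suffices to prove $\Theta_n\to\Theta$ almost surely in $\wp_w(\ALP Y)$; testing against $F\in C_b(\ALP X\times\wp_w(\ALP Y))$ and using bounded convergence then gives $\psi(\mu_n)\ws\psi(\mu)$. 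Removing a null set, I fix $\omega$ for which $x_n:=\xi_n(\omega)\to x:=\xi(\omega)$, the identity $\mu_n(\cdot\vert x_n)=f_n(x_n,\cdot)\,\mu_n^{\ALP Y}$ holds for every $n$, and $\mu(\cdot\vert x)=f(x,\cdot)\,\mu^{\ALP Y}$ holds (each of these is an almost sure event, a countable intersection of full-measure events).

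The core step is: if $x_n\to x$ in $\ALP X$, then $f_n(x_n,\cdot)\,\mu_n^{\ALP Y}\ws f(x,\cdot)\,\mu^{\ALP Y}$ in $\wp_w(\ALP Y)$. Since $L:=\{x_n:n\in\Na\}\cup\{x\}$ is compact in $\ALP X$, hypothesis (iii) gives $\sup_{L\times K}\lvert f_n-f\rvert\to0$ for every compact $K\subseteq\ALP Y$, and continuity of $f$ gives uniform continuity of $f$ on the compact set $L\times K$; combining these yields $f_n(x_n,\cdot)\to f(x,\cdot)$ uniformly on $K$, and in particular the $f_n(x_n,\cdot)$ are uniformly bounded on $K$. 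Now take $g\in C_c(\ALP Y)$ with support $K$: the functions $g\,f_n(x_n,\cdot)$ are supported in $K$, uniformly bounded, and converge uniformly to $g\,f(x,\cdot)\in C_c(\ALP Y)\subseteq C_b(\ALP Y)$, so using $\mu_n^{\ALP Y}\ws\mu^{\ALP Y}$ and the triangle inequality, $\int g\,f_n(x_n,\cdot)\,d\mu_n^{\ALP Y}\to\int g\,f(x,\cdot)\,d\mu^{\ALP Y}$. Hence $f_n(x_n,\cdot)\,\mu_n^{\ALP Y}\to f(x,\cdot)\,\mu^{\ALP Y}$ vaguely; since all of these are probability measures (total mass one, by the normalization above) and $\ALP Y$ is locally compact and second countable (being Polish), vague convergence together with convergence of total masses upgrades to weak-$*$ convergence. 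This is exactly where local compactness of $\ALP Y$ is used.

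The step I expect to be the main obstacle is precisely this last one. The densities $f_n$ are a priori only normalized in the integral sense, not uniformly bounded, so one cannot pass to the limit in $\int g\,f_n(x_n,\cdot)\,d\mu_n^{\ALP Y}$ for a general $g\in C_b(\ALP Y)$, because $g\,f_n(x_n,\cdot)$ need not be uniformly bounded. Local compactness is what permits restricting to compactly supported test functions, on which the locally uniform convergence of the $f_n$ does control the integrand; the absence of escaping mass (all measures being probabilities) is then what promotes the resulting vague convergence back to weak-$*$ convergence. The remaining bookkeeping — Borel measurability of $x\mapsto\mu_n(\cdot\vert x)$, the identifications $(\xi_n,\Theta_n)\sim\psi(\mu_n)$ and $(\xi,\Theta)\sim\psi(\mu)$, and assembling the countable intersection of full-measure events — is routine.
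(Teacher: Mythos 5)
Your proposal is correct, and its analytic core coincides with the paper's proof: the paper also first identifies $\mu_n(\cdot\vert x)=f_n(x,\cdot)\,\mu_n^{\ALP Y}$ a.e., then proves exactly your key step --- for any $x_n\to x$, locally uniform convergence of the densities plus uniform continuity of $f$ on $\left(\{x_n\}\cup\{x\}\right)\times K$ gives vague convergence of $f_n(x_n,\cdot)\,\mu_n^{\ALP Y}$ to $f(x,\cdot)\,\mu^{\ALP Y}$, upgraded to weak-$*$ convergence via local compactness of $\ALP Y$ (Bauer, Cor.~30.9). The only genuine divergence is the final assembly: the paper sets $k_n(x)=g(x,\nu_n^x)$ and invokes the generalized continuous-mapping theorem (Billingsley, Thm.~5.5; Bogachev, Thm.~8.4.1(iii)) to pass from ``$h_n(x_n)\to h(x)$ for all convergent sequences'' directly to $\int k_n\,d\mu_n^{\ALP X}\to\int k\,d\mu^{\ALP X}$, whereas you unpack that theorem by hand via Skorokhod representation of $\mu_n^{\ALP X}\ws\mu^{\ALP X}$ plus bounded convergence. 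The two routes are essentially equivalent (Skorokhod coupling is the standard proof of the mapping theorem you are replacing), so neither buys extra generality; your version is more self-contained, and it is in fact slightly more careful on one point the paper glosses over, namely that $f_n(x,\cdot)\,\mu_n^{\ALP Y}$ is only guaranteed to have total mass one for $\mu_n^{\ALP X}$-a.e.\ $x$ --- your restriction to the full-measure event where $\Theta_n=\mu_n(\cdot\vert\xi_n)=f_n(\xi_n,\cdot)\,\mu_n^{\ALP Y}$ ensures the measures appearing in the vague-to-weak upgrade really are probability measures. The price is that you must (and do) note that the exceptional sets form a countable family of null events for the respective laws of the $\xi_n$.
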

\begin{proof}
See \ref{app:wstoi}.
\end{proof}
   
The conditions above is motivated from the convergence condition in \citet{milgrom1985}, which considers convergence results for Bayesian games\footnote{In fact, they require that the limit density be a.s.-continuous with respect to the product measure. They also do not require the spaces to be locally compact. We impose a slightly stronger assumption in order to obtain a general result.}. We have the following corollary of the theorem above.

\begin{corollary}\label{cor:wstoi2}
Let $\ALP X,\ALP Y_1, \ALP Y_2$ and $\ALP Z$ be locally compact Polish spaces. Consider weak-* sequences of measures $\{\mu_n\}_{n\in\Na}\subset\wp_w(\ALP X\times\ALP Y_1\times\ALP Y_2)$ and $\{\nu_n\}_{n\in\Na}\subset\wp_w(\ALP Y_1\times\ALP Z)$ such that $\mu_n^{\ALP Y_1} = \nu_n^{\ALP Y_1}$, $\mu_n\ws \mu$ and $\nu_n\ws\nu$ for some $\mu\in\wp_w(\ALP X\times\ALP Y_1\times\ALP Y_2)$ and $\nu\in\wp_w(\ALP Y_1\times\ALP Z)$.  Assume that (i) $\mu_n$ has a (measurable) density $f_n$ with respect to $\mu^{\ALP X}_n\otimes\mu^{\ALP Y_1}_n\otimes\mu^{\ALP Y_2}_n$ for every $n\in\Na$, (ii) $\mu$ has a continuous density $f$ with respect to $\mu^{\ALP X}\otimes\mu^{\ALP Y_1}\otimes\mu^{\ALP Y_2}$, and (iii) $f_n$ converges uniformly to $f$ on each compact subset of $\ALP X\times \ALP Y_1\times\ALP Y_2$ as $n\rightarrow\infty$. Define a sequence of measures $\{\lambda_n\}_{n\in\Na}\subset\wp(\ALP X\times\ALP Y_1\times\ALP Y_2\times\ALP Z)$ and $\lambda\in\wp(\ALP X\times\ALP Y_1\times\ALP Y_2\times\ALP Z)$ as 
\beq{\label{eqn:lambdan}\lambda_n(dx,dy_1,dy_2,dz) = \nu_n(dz|y_1)\mu_n(dx,dy_1,dy_2)\;\; n\in\Na,\quad \lambda(dx,dy_1,dy_2,dz) = \nu(dz|y_1)\mu(dx,dy_1,dy_2).}
Then, the following holds:
\begin{enumerate}
\item The sequence $\lbrace \mu_n\rbrace_{n\in\Na}$ converges to $\mu$ as $n\rightarrow\infty$ in three topological spaces: $\wp_I(\ALP X\times(\ALP Y_1\times\ALP Y_2))$ and $\wp_I(\ALP Y_i\times(\ALP X\times\ALP Y_j))$, where $i,j\in\{1,2\}, i\neq j$.
\item The sequence $\{\lambda_n\}_{n\in\Na}$ converges to $\lambda$ as $n\rightarrow\infty$ in the weak-* topology.
\item The sequence $\{\lambda_n\}_{n\in\Na}$ converges to $\lambda$ as $n\rightarrow\infty$ in the space $\wp_I(\ALP Y_2\times(\ALP X\times\ALP Y_1\times\ALP Z))$.  
\end{enumerate}
\end{corollary}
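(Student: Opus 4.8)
The plan is to prove the three assertions in the stated order, each resting on the preceding one. Part~1 follows by applying Theorem~\ref{thm:wstoi} to each of the three ways of splitting $\ALP X\times\ALP Y_1\times\ALP Y_2$ into a ``conditioning'' factor and its complement: for $\wp_I(\ALP X\times(\ALP Y_1\times\ALP Y_2))$ take $\ALP X$ and $\ALP Y_1\times\ALP Y_2$ (the latter again locally compact Polish, being a finite product of such), and for $\wp_I(\ALP Y_i\times(\ALP X\times\ALP Y_j))$ take $\ALP Y_i$ and $\ALP X\times\ALP Y_j$. In each case the density $f_n$ of $\mu_n$ with respect to $\mu_n^{\ALP X}\otimes\mu_n^{\ALP Y_1}\otimes\mu_n^{\ALP Y_2}$ serves, after reordering coordinates, as a density of $\mu_n$ with respect to the product of the conditioning marginal and the product of the remaining individual marginals; the limit density is $f$, continuous by hypothesis~(ii); and $f_n\to f$ uniformly on compacta by hypothesis~(iii), since the coordinate permutation carries a compact subset of the reordered product onto a compact subset of $\ALP X\times\ALP Y_1\times\ALP Y_2$. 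The only point worth a remark is that the reference measure in the ``complementary'' slot is a product of individual marginals (e.g.\ $\mu_n^{\ALP Y_1}\otimes\mu_n^{\ALP Y_2}$) rather than the joint marginal of those factors; this is harmless, because the argument of Theorem~\ref{thm:wstoi} uses only that each conditional measure has a density with respect to a weak-$*$ convergent sequence of reference probability measures whose densities integrate to one, both of which hold here.

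For Part~2 the key observation is the disintegration identity
\[
\nu_n(dz\vert y_1)\,\mu_n(dx,dy_1,dy_2)\;=\;\mu_n(dx,dy_2\vert y_1)\,\nu_n(dy_1,dz),
\]
which holds because $\mu_n^{\ALP Y_1}=\nu_n^{\ALP Y_1}$, so $\nu_n(dz\vert y_1)\mu_n^{\ALP Y_1}(dy_1)=\nu_n(dy_1,dz)$. Identifying $\ALP X\times\ALP Y_1\times\ALP Y_2\times\ALP Z$ with $\ALP Y_1\times(\ALP X\times\ALP Y_2)\times\ALP Z$, this says exactly that $\lambda_n=\varphi(\mu_n,\nu_n)$, where $\varphi$ is the gluing map of Theorem~\ref{thm:main} with $\ALP A=\ALP Y_1$, $\ALP B=\ALP X\times\ALP Y_2$, $\ALP C=\ALP Z$, and with $\mu_n$ --- not $\nu_n$ --- occupying the first slot, the one required to converge in the topology of information. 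The pair $(\mu_n,\nu_n)$ is consistent by hypothesis, and $(\mu,\nu)$ is consistent as well since $\mu^{\ALP Y_1}=\nu^{\ALP Y_1}$ by Lemma~\ref{lem:marg} and uniqueness of weak-$*$ limits. By Part~1, $\mu_n\to\mu$ in $\wp_I(\ALP Y_1\times(\ALP X\times\ALP Y_2))$, and $\nu_n\ws\nu$ by assumption; hence $(\mu_n,\nu_n)\to(\mu,\nu)$ in the domain of $\varphi$, and the continuity of $\varphi$ (Theorem~\ref{thm:main}) gives $\lambda_n=\varphi(\mu_n,\nu_n)\ws\varphi(\mu,\nu)$. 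Finally $\varphi(\mu,\nu)(dx,dy_1,dy_2,dz)=\mu(dx,dy_2\vert y_1)\nu(dy_1,dz)=\nu(dz\vert y_1)\mu(dx,dy_1,dy_2)=\lambda$, again by the displayed identity, now applied to $\mu,\nu$.

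For Part~3 one applies Theorem~\ref{thm:wstoi} once more, to the sequence $\{\lambda_n\}$ itself, with $\ALP Y_2$ as the conditioning factor and $\ALP X\times\ALP Y_1\times\ALP Z$ (locally compact Polish) as its complement. Substituting the density of $\mu_n$ and again using $\nu_n(dz\vert y_1)\mu_n^{\ALP Y_1}(dy_1)=\nu_n(dy_1,dz)$,
\[
\lambda_n(dx,dy_1,dy_2,dz)\;=\;f_n(x,y_1,y_2)\,\bigl(\mu_n^{\ALP X}\otimes\nu_n\bigr)(dx,dy_1,dz)\,\mu_n^{\ALP Y_2}(dy_2),
\]
and integrating out $x,y_1,z$ shows $\lambda_n^{\ALP Y_2}=\mu_n^{\ALP Y_2}$, the relevant inner integral being $\int f_n(x,y_1,y_2)\mu_n^{\ALP X}(dx)\mu_n^{\ALP Y_1}(dy_1)=1$ for $\mu_n^{\ALP Y_2}$-a.e.\ $y_2$. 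Thus $\lambda_n$ has density $(x,y_1,z,y_2)\mapsto f_n(x,y_1,y_2)$ with respect to $\bigl(\mu_n^{\ALP X}\otimes\nu_n\bigr)\otimes\lambda_n^{\ALP Y_2}$; the reference measures $\mu_n^{\ALP X}\otimes\nu_n$ converge weak-$*$ to $\mu^{\ALP X}\otimes\nu$ (by Lemma~\ref{lem:marg} and weak-$*$ continuity of products); the same computation gives $\lambda$ the density $(x,y_1,z,y_2)\mapsto f(x,y_1,y_2)$, continuous since $f$ is; $\lambda_n\ws\lambda$ by Part~2; and $f_n\to f$ uniformly on compacta carries over to these densities, since a compact subset of $\ALP X\times\ALP Y_1\times\ALP Z\times\ALP Y_2$ projects to one of $\ALP X\times\ALP Y_1\times\ALP Y_2$. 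Theorem~\ref{thm:wstoi}, with the same harmless substitution of products of marginals for joint marginals as in Part~1, then yields $\lambda_n\to\lambda$ in $\wp_I(\ALP Y_2\times(\ALP X\times\ALP Y_1\times\ALP Z))$.

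I expect the main obstacle to be Part~2: recognizing that $\lambda_n$ is the image of $(\mu_n,\nu_n)$ under the gluing map of Theorem~\ref{thm:main}, and that it is $\mu_n$ --- which carries a density and hence, via Part~1, converges in the topology of information --- that must go in the first slot, while $\nu_n$, for which only weak-$*$ convergence is known, goes in the second. The disintegration identity is what legitimizes this rearrangement; a head-on attempt to prove $\lambda_n\ws\lambda$ would fail, the gluing map not being weak-$*$ continuous --- precisely the obstruction the topology of information is built to overcome. The residual bookkeeping (verifying $\lambda_n^{\ALP Y_2}=\mu_n^{\ALP Y_2}$ and the factorization of densities, and confirming that in Theorem~\ref{thm:wstoi} a product of the individual marginals may replace the joint marginal of the complementary factors) is routine but should be done with care.
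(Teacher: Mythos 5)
Your proposal is correct and follows essentially the same route as the paper: Part~1 via Theorem~\ref{thm:wstoi} applied to the three coordinate splittings, Part~2 by recognizing $\lambda_n=\varphi(\mu_n,\nu_n)$ with $\ALP A=\ALP Y_1$, $\ALP B=\ALP X\times\ALP Y_2$, $\ALP C=\ALP Z$ and invoking Theorem~\ref{thm:main}, and Part~3 by regrouping $\ALP Y_1\times\ALP Z$ into a single factor carrying the marginal $\nu_n$ and reapplying Theorem~\ref{thm:wstoi}. Your explicit remark that the reference measure in the complementary slot is a product of individual marginals rather than the joint marginal, and that the proof of Theorem~\ref{thm:wstoi} tolerates this, addresses a point the paper's own proof passes over silently.
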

\begin{proof}
Theorem \ref{thm:wstoi} implies the first part of the corollary. The second part then follows from Theorem \ref{thm:main}. We now prove Part 3 of the corollary. 

Let $\tilde{\ALP Y}_1 = \ALP Y_1\times\ALP Z$. Also note that, by the definition of $\lambda_n$ and $\lambda$ in \eqref{eqn:lambdan}, $\lambda^{\tilde{\ALP Y}_1}_n = \nu_n$ and $\lambda^{\tilde{\ALP Y}_1} = \nu$. Define $\tilde f_n,\tilde f:\ALP X\times\ALP Y_2\times\tilde{\ALP Y}_1\rightarrow\Re$ as $\tilde f_n(x,y_2,(y_1,z)) = f_n(x,y_1,y_2)$ for $n\in\Na$ and $\tilde f(x,y_2,(y_1,z)) = f(x,y_1,y_2)$. Then, for any $n\in\Na$, we have $\mu_n(dx,dy_1,dy_2) = f(x,y_1,y_2) \lambda^{\ALP X}_n(dx)\lambda^{\ALP Y_1}_n(dy_1)\lambda^{\ALP Y_2}_n(dy_2) $, which further implies
\beqq{\lambda_n(dx,dy_1,d_2,dz) &=&  \nu_n(dz|y_1)\mu_n(dx,dy_1,dy_2) = f_n(x,y_1,y_2) \lambda^{\ALP X}_n(dx)\lambda^{\ALP Y_2}_n(dy_2) \Big(\nu_n(dz|y_1)\lambda^{\ALP Y_1}_n(dy_1)\Big),\\
 & =& \tilde f_n(x,y_2,(y_1,z)) \lambda^{\ALP X}_n(dx)\lambda^{\ALP Y_2}_n(dy_2)\lambda^{\tilde{\ALP Y}_1}_n(d y_1,dz).}
A similar result holds for $\lambda$. Thus, the following statements follow immediately from the definitions above, hypotheses of the corollary, and the above equations:
\begin{enumerate}
\item[(i)] $\lambda_n$ is absolutely continuous with respect to the measure $\lambda^{\ALP X}_n\otimes\lambda^{\ALP Y_2}_n\otimes\lambda^{\tilde{\ALP Y}_1}_n$ for every $n\in\Na$ with the Radon-Nikodym derivative as $\tilde f_n$.
\item[(ii)] $\lambda$ has a continuous density $\tilde f$ with respect to $\lambda^{\ALP X}\otimes\lambda^{\ALP Y_2}\otimes\lambda^{\tilde{\ALP Y}_1}$. \item[(iii)] $\tilde f_n$ converges uniformly to $\tilde f$ on each compact subset of $\ALP X\times \ALP Y_2\times\tilde{\ALP Y}_1$ as $n\rightarrow\infty$.
\end{enumerate}
The above statements, Part 2 of the corollary, together with the result of Theorem \ref{thm:wstoi}, imply that the sequence $\{\lambda_n\}_{n\in\Na}$ converges to $\lambda$ as $n\rightarrow\infty$ in the space $\wp_I(\ALP Y_2\times(\ALP X\times\tilde{\ALP Y}_1))=\wp_I(\ALP Y_2\times(\ALP X\times\ALP Y_1\times\ALP Z))$.   This completes the proof of the corollary.
\end{proof}

The above corollary is useful in optimization or game problems in which multiple decision makers act simultaneously based on their observations. To see this, consider a game or an optimization problem with $N\in\Na$ decision makers. Let $\ALP X$, $\ALP Y_i$ and $\ALP Z_i$ denote the state space of the nature, observation space of decision maker $i$ and the decision space of decision maker $i$, respectively, for $i\in\{1,\ldots,N\}$. Assume that all the spaces are locally compact Polish spaces, and the joint distribution of the state and the observations of the decision makers admits a continuous density function with respect to the product measure of their marginals. Then, the result of Corollary \ref{cor:wstoi2} can be used iteratively to conclude that a weak-* convergent sequence of joint measures over state, observation and action spaces of the decision makers, induced by appropriate strategies of the decision makers, maintains conditional independence properties\footnote{In this setup, the number of 
conditional independence properties to check are the same as the number of decision makers.} in the limit.    

Our proof of Theorem \ref{thm:wstoi} relies on a property of measure spaces over a locally compact Polish space with weak-* topology. Therefore, it is not clear as of now if the restriction of locally compact Polish spaces can be weakened in the hypotheses of Theorem \ref{thm:wstoi} and its corollary above.

A somewhat different condition was considered in \cite{jordan1977}, in which the conditional measure on $\ALP Y$ given $x$ is assumed to be continuous in $x$. In our next theorem, we show that under such an assumption with another condition, weak-* convergence of a sequence of measures imply convergence in the topology of information.  

\begin{theorem}\label{thm:wstoi3}
Let $\ALP X$ and $\ALP Y$ be Polish spaces. Let $\{\mu_n\}_{n\in\Na} \subset \wp_w(\ALP X\times\ALP Y)$ be a weak-* convergent sequence of measures, converging to $\mu$ as $n\rightarrow\infty$. For each $n\in\Na$, let $f_n:\ALP X\rightarrow\wp_w(\ALP Y)$ be the measurable function defined as $f_n(x)(\cdot) = \mu_n(\cdot|x)$. Similarly, define measurable function $f:\ALP X\rightarrow\wp_w(\ALP Y)$ as $f(x)(\cdot) = \mu(\cdot|x)$. Let $E\subset\ALP X$ be the set of all $x\in\ALP X$ such that there exists a sequence $\{x_n\}_{n\in\Na}\subset\ALP X$ such that $x_n\rightarrow x$, but the sequence $\{f_n(x_n)\}_{n\in\Na}$ does not converge to $f(x)$. If $\mu^{\ALP X}(E) = 0$, then $\lbrace \mu_n\rbrace_{n\in\Na}$ converges to $\mu$ as $n\rightarrow\infty$ in $\wp_I(\ALP X\times\ALP Y)$. 
\end{theorem}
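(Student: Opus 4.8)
The plan is to pass to the associated measures in $\ALP N$ and to recognize the statement as an instance of the extended continuous mapping theorem. By the homeomorphism $\psi$ (Definition \ref{def:topinfo}), $\mu_n\to\mu$ in $\wp_I(\ALP X\times\ALP Y)$ if and only if $\psi(\mu_n)\ws\psi(\mu)$ in $\wp_w(\ALP X\times\wp_w(\ALP Y))$ (the limit $\psi(\mu)$ always lies in $\ALP N$, which carries the subspace topology). Now $\psi(\mu_n)$ is the image of the marginal $\mu^{\ALP X}_n$ under the map $h_n\colon x\mapsto(x,f_n(x))$, and $\psi(\mu)$ is the image of $\mu^{\ALP X}$ under $h\colon x\mapsto(x,f(x))$; these maps are Borel because, by the disintegration theorem, $x\mapsto\mu_n(\cdot\mid x)$ and $x\mapsto\mu(\cdot\mid x)$ are Borel from $\ALP X$ into $\wp_w(\ALP Y)$. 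Since $\mu^{\ALP X}_n\ws\mu^{\ALP X}$ by Lemma \ref{lem:marg}, the goal reduces to showing that the image of $\mu^{\ALP X}_n$ under $h_n$ converges weak-* to the image of $\mu^{\ALP X}$ under $h$.

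The exceptional set for this mapping-theorem problem is exactly $E$: if $x_k\to x$ in $\ALP X$, then the first coordinate of $h_k(x_k)=(x_k,f_k(x_k))$ converges automatically to $x$, so $h_k(x_k)\to h(x)$ can fail only when $f_k(x_k)\not\to f(x)$, i.e.\ only for $x\in E$. I would then run the standard argument through the Skorokhod representation theorem on the Polish space $\ALP X$: choose $\ALP X$-valued random variables $\xi_n,\xi$ on one probability space with $\xi_n$ distributed as $\mu^{\ALP X}_n$, $\xi$ distributed as $\mu^{\ALP X}$, and $\xi_n\to\xi$ almost surely. On the almost-sure event $\{\xi_n\to\xi\}\cap\{\xi\notin E\}$, which has probability one because $\mu^{\ALP X}(E)=0$, the definition of $E$ forces $f_n(\xi_n)\to f(\xi)$; hence for every $\Phi\in C_b(\ALP X\times\wp_w(\ALP Y))$ one has $\Phi(\xi_n,f_n(\xi_n))\to\Phi(\xi,f(\xi))$ almost surely, and since $\Phi$ is bounded, dominated convergence yields
\[
\int\Phi\,d\psi(\mu_n)=\ex{\Phi(\xi_n,f_n(\xi_n))}\longrightarrow\ex{\Phi(\xi,f(\xi))}=\int\Phi\,d\psi(\mu).
\]
As $\Phi$ is arbitrary, $\psi(\mu_n)\ws\psi(\mu)$, hence $\mu_n\to\mu$ in $\wp_I(\ALP X\times\ALP Y)$.

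The one delicate point — the step I expect to need the most care — is measurability: for the hypothesis $\mu^{\ALP X}(E)=0$ and the event $\{\xi\in E\}$ to make sense one needs $E$ to be at least universally measurable. I would verify this by noting that, with metrics $d$ on $\ALP X$ and $\rho$ on $\wp_w(\ALP Y)$, one has $x\in E$ if and only if there is a rational $\varepsilon>0$ such that for every $m\in\Na$ there exist $k\ge m$ and $x'\in\ALP X$ with $d(x',x)<1/m$ and $\rho(f_k(x'),f(x))\ge\varepsilon$; for fixed $\varepsilon,m,k$ the set of such $x$ is the projection onto the first coordinate of a Borel subset of $\ALP X\times\ALP X$, hence analytic, and $E$ is a countable union of countable intersections of such sets, hence analytic and so universally measurable (one may then simply enclose $E$ in a Borel $\mu^{\ALP X}$-null set). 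Everything else — the reduction through $\psi$, the convergence of marginals via Lemma \ref{lem:marg}, and the Skorokhod/bounded-convergence computation — is then routine.
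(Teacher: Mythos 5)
Your proposal is correct and follows essentially the same route as the paper: both identify $\psi(\mu_n)$ as the pushforward of $\mu_n^{\ALP X}$ under $h_n(x)=(x,f_n(x))$, invoke Lemma \ref{lem:marg} for $\mu_n^{\ALP X}\ws\mu^{\ALP X}$, and reduce the claim to the extended continuous mapping theorem --- the paper simply cites Billingsley's Theorem 5.5 at this point, whereas you reprove that theorem via Skorokhod representation and bounded convergence. Your additional care about the (universal) measurability of the exceptional set $E$ is a point the paper glosses over, but it does not change the substance of the argument.
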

\begin{proof}
Define $h_n:\ALP X\rightarrow\ALP X\times\wp_w(\ALP Y)$ to be the function as $h_n(x) = (x,f_n(x))$, and similarly define $h(x) = (x,f(x))$. Then, for any $x\in E^\complement$, if $\{x_n\}_{n\in\Na}\subset\ALP X$ is a sequence converging to $x$, then the sequence $\{h_n(x_n)\}_{n\in\Na}$ converges to $h(x)$. Since $\mu^{\ALP X}_n\ws\mu^{\ALP X}$, we know from \citet[Theorem 5.5, p. 34]{billing1968} or \citet[Theorem 8.4.1 (iii), p. 195]{bogachev2006b} that for any continuous function $g\in C_b(\ALP X\times\wp_w(\ALP Y))$, we have
\beqq{\lf{n}\int_{\ALP X\times\wp_w(\ALP Y)} g(h_n(x))d\mu^{\ALP X}_n = \int_{\ALP X\times\wp_w(\ALP Y)}g(h(x))d\mu^{\ALP X}.}
This implies that $\mu_n$ converges to $\mu_0$ as $n\rightarrow\infty$ in $\wp_I(\ALP X\times\ALP Y)$, which completes the proof of the theorem.
\end{proof}

\citet{jackson2012} also make similar assumptions as mentioned in Theorems \ref{thm:wstoi} and \ref{thm:wstoi3} above. For one part, their assumption on page 207 of ther paper is similar to the one in \citet{milgrom1985}, which we discuss in Theorem \ref{thm:wstoi}. For other parts, on page 206 of their paper, they assume a sort of uniform continuity version of the original continuity assumption of \citet{jordan1977}. Theorem \ref{thm:wstoi3} can be used to show that both of these assumptions imply convergence in the topology of information. We refer the reader to \ref{app:jordan} to see how the result of Theorem \ref{thm:wstoi3} can be applied to the setting considered in \citet{jordan1977}. 

To see how the assumptions of page 206 in \citet{jackson2012} are a special case of Theorem \ref{thm:wstoi3}, recall that their assumption requires that $\mu_n$ converges weakly to $\mu_\infty$. Further, if $d_{\ALP X}$ denotes some fixed metric generating the topology on $\ALP X$, they require that for each $\varepsilon>0$ and each continuous function $f:\ALP X\times \ALP Y\rightarrow \left[ 0,1\right]$, there exists $N\in \mathbb{N}$ and $\delta>0$ such that for all $m,n>N$ (including $n=\infty$) and for all $x,x'\in \ALP X$ with $d_{\ALP X}(x,x')<\delta$
\begin{align}\label{eqn:Jackson}
     \left\vert \int_{\ALP Y} f(x,y)d\mu_n(y\vert x)-\int_{\ALP Y} f(x',y)d\mu_m(y\vert x') \right\vert<\varepsilon.
\end{align}
Now, let $g:\ALP Y\rightarrow \left[ 0,1\right]$ be a continuous function. Fix some $x_\infty\in \ALP X$ and consider a sequence $\{x_m\}_{m\in\Na}$ such that $x_m\rightarrow x_\infty$ as $m\rightarrow\infty$. In \eqref{eqn:Jackson} above, take $f=g$, $n=\infty$, $x=x_\infty$ and $x'=x_m$. Then, (\ref{eqn:Jackson}) implies that $\mu_m(\cdot\vert x_m)$ converges to $\mu_\infty(\cdot\vert x_\infty)$ as $m\rightarrow\infty$ in weak-* topology\footnote{Weak-* convergence for conditional distributions in Theorem \ref{thm:wstoi3} requires one to consider convergence for all bounded continuous functions, not just those mapping to $\left[ 0,1\right]$. However, by adding a constant and rescaling, it suffices to show convergence for functions mapping to $\left[ 0,1\right]$.}. From Theorem \ref{thm:wstoi3}, it follows that $\mu_n$ converges to $\mu_\infty$ as $n\rightarrow\infty$ in $\wp_I(\ALP X\times\ALP Y)$.

\subsection{Relation to the Topology of Setwise Convergence}
In this subsection, we state sufficient conditions when setwise convergence of measures imply convergence in topology of information. Before we state the conditions, let us first consider an example where setwise convergence of a sequence of measures {\it does not} imply convergence in the topology of information.

\begin{example}\label{exm:set}
This example uses Rademacher functions, which were used in \citet[p. 445]{hellwig1996} to construct an example for discontinuous behavior of conditional distributions under weak convergence. Let $\left[0,1\right[$ with Borel $\sigma$-algebra $\FLD B_{\left[ 0,1\right[}$ be given. Let $\lambda$ denote the Lebesgue measure restricted to $\left[ 0,1 \right[$. Recall that the $n$-th Rademacher function is defined as $F_n(\omega)=\sum_{k=0}^{2^{n-1}-1} \mathbb{I}_{\left[ \frac{2k}{2^n},\frac{2k+1}{2^n}\right[ }(\omega)$ for any $\omega\in \left[ 0,1\right[$. We have the following result.

\begin{lemma}\label{lem:cl1}
Define a sequence of measures $\{\lambda_n\}_{n\in\Na}$ such that $\lambda_n(A) := \lambda\left( A\cap F_n^{-1}(1)\right)$ for any $A\in\FLD B_{\left[ 0,1\right[}$ and $n\in\Na$. For any $A\in \FLD B_{\left[ 0,1\right[}$,
\begin{align}\label{rademacher}
\lambda_n(A)=\lambda\left( A\cap F_n^{-1}(1)\right) \longrightarrow \frac{1}{2}\lambda(A) \quad\text{ as }\quad n\rightarrow\infty.
\end{align}
In other words, $\lambda_n\rightarrow\frac{1}{2}\lambda$ as $n\rightarrow\infty$ in the setwise topology over the space of measures over $[0,1[$.
\end{lemma}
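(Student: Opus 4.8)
The plan is to reduce the statement to an exact computation on dyadic intervals and then pass to an arbitrary Borel set by an approximation argument. First I would observe that $F_n^{-1}(1)=\bigcup_{k=0}^{2^{n-1}-1}\left[\tfrac{2k}{2^n},\tfrac{2k+1}{2^n}\right[$ is exactly the union of the left halves of the $2^{n-1}$ dyadic intervals of generation $n-1$ (the intervals $\left[j2^{-(n-1)},(j+1)2^{-(n-1)}\right[$); in particular $\lambda\bigl(F_n^{-1}(1)\bigr)=\tfrac12$, and $\lambda_n$ is a nonnegative measure with $\lambda_n\le\lambda$. The elementary key fact is: if $I$ is a dyadic interval of generation $m$ with $m\le n-1$, then $\lambda_n(I)=\lambda\bigl(I\cap F_n^{-1}(1)\bigr)=\tfrac12\lambda(I)$ \emph{exactly}. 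Indeed, $I$ is the disjoint union of $2^{n-m-1}$ dyadic intervals $J$ of generation $n-1$, and for each such $J$ the set $J\cap F_n^{-1}(1)$ is precisely the left half of $J$, of length $\tfrac12\lambda(J)$; summing over the pieces gives the identity. By finite additivity of $\lambda_n$ and of $\lambda$ this identity extends to the algebra $\FLD D$ of finite unions of dyadic intervals, so that for each fixed $D\in\FLD D$ there is an $N$ (the successor of the largest generation appearing in some fixed dyadic decomposition of $D$) with $\lambda_n(D)=\tfrac12\lambda(D)$ for all $n\ge N$.

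Next I would treat a general $A\in\FLD B_{\left[0,1\right[}$ by approximation. Since $\FLD D$ is an algebra generating $\FLD B_{\left[0,1\right[}$, a standard approximation result for a finite measure (the generating algebra is dense in the $\sigma$-algebra it generates for the pseudometric $(A,B)\mapsto\lambda(A\triangle B)$) gives, for each $\varepsilon>0$, a set $D\in\FLD D$ with $\lambda(A\triangle D)<\varepsilon$. Because $0\le\lambda_n\le\lambda$, one has $\bigl|\lambda_n(A)-\lambda_n(D)\bigr|\le\lambda_n(A\triangle D)\le\lambda(A\triangle D)<\varepsilon$ and $\bigl|\tfrac12\lambda(A)-\tfrac12\lambda(D)\bigr|\le\tfrac12\lambda(A\triangle D)<\varepsilon$; choosing $N$ as above so that $\lambda_n(D)=\tfrac12\lambda(D)$ for $n\ge N$ and invoking the triangle inequality yields $\bigl|\lambda_n(A)-\tfrac12\lambda(A)\bigr|<2\varepsilon$ for every $n\ge N$. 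Since $\varepsilon>0$ is arbitrary, $\lambda_n(A)\to\tfrac12\lambda(A)$, which is precisely (\ref{rademacher}); the closing sentence of the lemma is then just the definition of setwise convergence.

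The bookkeeping above is routine; the one step carrying genuine content is the density of the dyadic algebra in the Borel $\sigma$-algebra for the symmetric-difference pseudometric, together with the inequality $\lambda_n\le\lambda$ that keeps the approximation under control after passing through $\lambda_n$. That is the step I expect to be the main (although entirely standard) obstacle. As an alternative that avoids the approximation altogether, one may write $\lambda_n(A)=\tfrac12\lambda(A)+\tfrac12\int_{\left[0,1\right[}\IND_A\,(2F_n-1)\,d\lambda$ and use that the functions $2F_n-1$ are the classical Rademacher functions, which form an orthonormal system in $L^2(\left[0,1\right[,\lambda)$; since $\IND_A\in L^2$, Bessel's inequality forces $\int_{\left[0,1\right[}\IND_A\,(2F_n-1)\,d\lambda\to0$, yielding (\ref{rademacher}) at once.
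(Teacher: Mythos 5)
Your proof is correct, but it passes from dyadic intervals to general Borel sets by a different mechanism than the paper. The paper defines $\FLD D$ to be the collection of Borel sets for which \eqref{rademacher} holds, verifies that $\FLD D$ is a Dynkin system (the closure under countable disjoint unions is the delicate step there, handled by a Fatou's-lemma squeeze on $\liminf$ and $\limsup$ using the complementary relation between $F_n^{-1}(1)$ and $F_n^{-1}(0)$), checks that $\FLD D$ contains the intersection-stable family of dyadic intervals, and invokes Dynkin's lemma. You instead prove the \emph{exact} identity $\lambda_n(D)=\tfrac12\lambda(D)$ for $n$ large on the algebra of finite unions of dyadic intervals and then approximate an arbitrary Borel set $A$ in the pseudometric $\lambda(A\triangle D)$, using the domination $\lambda_n\le\lambda$ to transfer the approximation error through $\lambda_n$ uniformly in $n$. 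The shared core is the same elementary computation on dyadic intervals (the paper asserts it with less detail than you give); what your route buys is that the uniform bound $\lambda_n(A\triangle D)\le\lambda(A\triangle D)$ makes the limit interchange trivial, avoiding the Fatou argument entirely, at the cost of invoking the standard density of a generating algebra in the Borel $\sigma$-algebra for a finite measure. Your closing alternative via the orthonormality of the Rademacher system and Bessel's inequality is also correct and is arguably the shortest complete argument, though it is the furthest from the paper's measure-theoretic presentation. All three routes are sound.
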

\begin{proof}
See \ref{app:cl1}.
\end{proof}
Note that then also $\lambda\left( A\cap F_n^{-1}(0)\right) \longrightarrow \frac{1}{2}\lambda(A)$ as $n\rightarrow\infty$. Let $\ALP X := [0,1[$ and $\ALP Y:=\{1,2\}$. We consider now the set $\ALP X\times\ALP Y$ with $\sigma$-algebra $\FLD B(\ALP X)\otimes\mathcal{P}(\lbrace 1,2\rbrace)$. Let $\mu$ be the measure given by
\begin{align*}
   \mu(A)=\int_{\left[ 0,1\right[} \mu(A_x\vert x) d\lambda
\end{align*}
where $A_x$ is the $x-$section of the set $A\in \FLD B(\ALP X)\otimes\FLD P(\ALP Y)$ and for each $x\in \left[ 0,1\right[$ we set $\mu(\lbrace 1\rbrace\vert x)=\mu(\lbrace 2\rbrace\vert x)= \frac{1}{2}$. For each $n$, we set $\mu_n(\lbrace 1\rbrace\vert x)=1$ if $F_n(x)=0$ and $\mu_n(\lbrace 1\rbrace\vert x)=0$ if $F_n(x)=1$. The measure $\mu_n\in\wp(\ALP X\times\ALP Y)$ is defined for each $A\in \FLD B(\ALP X)\otimes\FLD P(\ALP Y)$ as
\beqq{\mu_n(A)=\int_{\left[ 0,1\right[} \mu_n(A_x\vert x) d\lambda}

For each $A\in \FLD B(\ALP X)\otimes\FLD P(\ALP Y)$, let
$A_1:=\left\lbrace x\in \ALP X \vert A_x=\lbrace 1\rbrace  \right\rbrace$, $A_2:=\left\lbrace x\in \ALP X \vert A_x=\lbrace 2\rbrace  \right\rbrace$ and $A_{12}:=\left\lbrace x\in \ALP X \vert A_x=\lbrace 1,2\rbrace  \right\rbrace$. Note that $A_1,A_2$ and $A_{12}$ are all measurable sets. We have
\beqq{\mu_n(A)&=&\int_{A_1} (1-F_n) d\lambda+\int_{A_2} F_n d\lambda+\int_{A_{12}} \mu_n(\lbrace 1,2\rbrace\vert x) d\lambda\\
&=& \lambda(A_1\cap F_n^{-1}(0))+\lambda(A_2\cap F_n^{-1}(1))+\lambda(A_{12}),\\
\mu(A)&=&\int_{A_1} \mu(\lbrace 1\rbrace\vert x) d\lambda+\int_{A_2} \mu(\lbrace 2\rbrace\vert x) d\lambda+\lambda(A_{12})=\frac{1}{2}\lambda(A_1)+\frac{1}{2}\lambda(A_2)+\lambda(A_{12}).}
Lemma \ref{lem:cl1} implies that $\mu_n(A)\rightarrow \mu(A)$ for each $A\in \FLD B(\ALP X)\otimes\FLD P(\ALP Y)$, that is, $\mu_n$ converges to $\mu$ setwise. On the other hand, $\mu_n$ does not converge to $\mu$ in the topology of information, as $\mu(\cdot|x)=\frac{1}{2}\ind{1}+\frac{1}{2}\ind{2}$ for all $x\in\ALP X$ and $\mu_n(\cdot|x)$ is either equal to $\ind{1}$ or $\ind{2}$.{\hfill$\Box$}
\end{example}

It turns out that if $\ALP X$ is countable with discrete metric and $\ALP Y$ is a Polish space, then setwise convergence of a sequence of measures over $\ALP X\times\ALP Y$ implies convergence of that sequence of measures in the topology of information. This assumption corresponds to the convergence assumption in \citet{engl1995}.
\begin{theorem}\label{thm:settoi}
Let $\ALP X$ be a countable space with discrete metric and $\ALP Y$ be a Polish space. If a sequence $\{\mu_n\}_{n\in\Na}$ of probability measures on $\ALP X\times \ALP Y$ converges setwise to a probability measure $\mu$, then the sequence $\lbrace \mu_n\rbrace_{n\in\Na}$ converges to $\mu$ as $n\rightarrow\infty$ in $\wp_I(\ALP X\times\ALP Y)$. 
\end{theorem}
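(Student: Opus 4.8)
The plan is to work through the homeomorphism $\psi$ of Definition \ref{def:topinfo}. Since $\wp_I(\ALP X\times\ALP Y)$ is metrizable (Lemma \ref{lem:met}) and its topology is, by construction, the one making $\psi$ a homeomorphism onto $\ALP N\subset\wp_w(\ALP X\times\wp_w(\ALP Y))$, it suffices to prove $\psi(\mu_n)\ws\psi(\mu)$ in $\wp_w(\ALP X\times\wp_w(\ALP Y))$. Writing $p_n(x):=\mu_n^{\ALP X}(\{x\})$ and $p(x):=\mu^{\ALP X}(\{x\})$, one has $\psi(\mu_n)=\sum_{x:\,p_n(x)>0}p_n(x)\,\ind{(x,\mu_n(\cdot|x))}$ and similarly for $\mu$, so for a fixed $g\in C_b(\ALP X\times\wp_w(\ALP Y))$ with $M:=\infnorm{g}$ the goal reduces to showing $A_n:=\sum_x p_n(x)\,g(x,\mu_n(\cdot|x))$ converges to $A:=\sum_x p(x)\,g(x,\mu(\cdot|x))$, where summands with zero mass are read as $0$.

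First I would extract from setwise convergence the two facts needed. Applying it to the sets $\{x\}\times\ALP Y$ gives $p_n(x)\to p(x)$ for every $x$; since $\ALP X$ is countable and $\sum_x p_n(x)=\sum_x p(x)=1$, the standard Scheffé argument (fix a finite $S$ with $\sum_{x\notin S}p(x)$ small, control $\sum_{x\in S}|p_n(x)-p(x)|$ by finitely many limits, and bound the tail via $\sum_{x\notin S}p_n(x)=1-\sum_{x\in S}p_n(x)$) upgrades this to $\lf{n}\sum_x|p_n(x)-p(x)|=0$, i.e.\ $\mu_n^{\ALP X}\to\mu^{\ALP X}$ in total variation. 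Next, for each $x$ with $p(x)>0$ we have $p_n(x)>0$ eventually, and applying setwise convergence to $\{x\}\times B$ gives $\mu_n(B|x)=\mu_n(\{x\}\times B)/p_n(x)\to\mu(\{x\}\times B)/p(x)=\mu(B|x)$ for every Borel $B\subset\ALP Y$; thus $\mu_n(\cdot|x)\to\mu(\cdot|x)$ setwise on $\ALP Y$, hence in $\wp_w(\ALP Y)$, and since $\ALP X$ is discrete the slice $g(x,\cdot)$ is continuous, so $g(x,\mu_n(\cdot|x))\to g(x,\mu(\cdot|x))$.

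Then I would combine these through a termwise estimate. For each $x$ one checks $|p_n(x)g(x,\mu_n(\cdot|x))-p(x)g(x,\mu(\cdot|x))|\le M|p_n(x)-p(x)|+r_n(x)$, where $r_n(x):=p(x)|g(x,\mu_n(\cdot|x))-g(x,\mu(\cdot|x))|$ if $p(x)>0$ and $p_n(x)>0$, and $r_n(x):=0$ otherwise; in the excluded cases $p(x)=0$ or $p_n(x)=0$, so the term $M|p_n(x)-p(x)|$ already dominates the left side. Summing over $x$ gives $|A_n-A|\le M\sum_x|p_n(x)-p(x)|+\sum_x r_n(x)$. The first term tends to $0$ by the Scheffé step; in the second, $r_n(x)\le 2Mp(x)$, which is summable and independent of $n$, while $r_n(x)\to0$ for every $x$ (trivially if $p(x)=0$, by the previous paragraph if $p(x)>0$), so dominated convergence over the counting measure on $\ALP X$ yields $\sum_x r_n(x)\to0$. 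Hence $A_n\to A$ for every $g\in C_b(\ALP X\times\wp_w(\ALP Y))$, so $\psi(\mu_n)\ws\psi(\mu)$ and therefore $\mu_n\to\mu$ in $\wp_I(\ALP X\times\ALP Y)$.

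The only point that is not pure bookkeeping is controlling the infinite sum uniformly in $n$: the pointwise convergences $p_n(x)\to p(x)$ alone do not tame the tail of the series, and it is precisely the Scheffé-type upgrade to total-variation convergence of the $\ALP X$-marginals (together with a fixed summable dominating sequence for the continuity term) that makes both error contributions negligible. Everything else is routine.
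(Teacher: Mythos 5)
Your proof is correct, and its overall strategy coincides with the paper's: both reduce the claim to weak-\mbox{*} convergence of $\psi(\mu_n)$ in $\wp_w(\ALP X\times\wp_w(\ALP Y))$, both extract from setwise convergence the pointwise convergence $p_n(x)\to p(x)$ of the atomic marginals and the setwise (hence weak-\mbox{*}) convergence $\mu_n(\cdot|x)\to\mu(\cdot|x)$ at each atom of $\mu^{\ALP X}$. Where you diverge is in how these two facts are recombined. The paper first reduces to bounded \emph{uniformly} continuous test functions $g$ (citing an exercise in Bauer), splits the difference by changing the marginal measure first and the integrand second, handles the first piece by a lemma on integrating bounded measurable functions against setwise-convergent measures (Stokey, Exercise 11.2), and handles the second by truncating to a finite set $F_\varepsilon$ carrying most of the mass of $\mu^{\ALP X}$ and estimating the tail by $4\|g\|_\infty\varepsilon$. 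You instead do a termwise triangle inequality, upgrade $p_n\to p$ to $\ell^1$ convergence by Scheff\'e, and dispose of the remaining sum $\sum_x r_n(x)$ by dominated convergence with the $n$-independent dominant $2Mp(x)$ --- the key being that you weight the continuity error by $p(x)$ rather than by $p_n(x)$, which is exactly what makes DCT applicable. Your route is somewhat more self-contained (no reduction to uniformly continuous test functions, no external integration lemma) and works directly for arbitrary $g\in C_b(\ALP X\times\wp_w(\ALP Y))$; the paper's finite-truncation argument is essentially an unpacked version of the same tail control. Your handling of the degenerate cases $p(x)=0$ or $p_n(x)=0$ in the termwise bound is careful and correct, and your closing remark correctly identifies the tail of the series as the only genuine issue.
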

\begin{proof}
See \ref{app:settoi}.
\end{proof}
We also have an immediate corollary to the above theorem.
\begin{corollary}\label{cor:settoi}
Under the same assumptions of Theorem \ref{thm:settoi}, the result of Theorem \ref{thm:settoi} holds if $\mu_n\rightarrow\mu$ as $n\rightarrow\infty$ in the metric induced by the total variation norm.
\end{corollary}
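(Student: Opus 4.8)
The plan is to reduce the claim about total-variation convergence to Theorem~\ref{thm:settoi} by showing that convergence in total variation norm implies setwise convergence. Indeed, if $\mu_n\to\mu$ in total variation norm, then for every measurable set $A\subset\ALP X\times\ALP Y$ we may take $f=\IND_A-\IND_{A^\complement}$ (or simply $f=\IND_A$ after rescaling), which is a Borel measurable function into $[-1,1]$, so that $|\mu_n(A)-\mu(A)|\leq \|\mu_n-\mu\|_{TV}\to 0$. Hence $\mu_n\to\mu$ setwise, and the hypotheses of Theorem~\ref{thm:settoi} are met provided $\ALP X$ is countable with the discrete metric and $\ALP Y$ is Polish, which is exactly what ``under the same assumptions'' refers to.

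First I would state and verify the elementary implication ``total-variation convergence $\Rightarrow$ setwise convergence'' in a sentence, using the supremum characterization of $\|\cdot\|_{TV}$ given in the definition: for any fixed measurable $A$, the indicator $\IND_A$ maps into $[0,1]\subset[-1,1]$, so $\mu_n(A)-\mu(A)=\int \IND_A\,d(\mu_n-\mu)\leq \|\mu_n-\mu\|_{TV}$, and applying the same bound to $A^\complement$ (or to $-\IND_A$) gives $|\mu_n(A)-\mu(A)|\leq\|\mu_n-\mu\|_{TV}$. Letting $n\to\infty$ yields $\mu_n(A)\to\mu(A)$ for every $A\in\FLD B(\ALP X)\otimes\FLD B(\ALP Y)$, which is precisely setwise convergence.

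Then I would simply invoke Theorem~\ref{thm:settoi} with the sequence $\{\mu_n\}_{n\in\Na}$ and its setwise limit $\mu$ to conclude that $\mu_n\to\mu$ in $\wp_I(\ALP X\times\ALP Y)$ as $n\to\infty$. There is essentially no obstacle here: the corollary is a one-line consequence once the normed-convergence-implies-setwise-convergence step is recorded, and that step is a standard fact about the total variation norm. The only thing to be careful about is to make sure the supremum in the definition of $\|\cdot\|_{TV}$ is genuinely over all Borel measurable $f:\ALP X\times\ALP Y\to[-1,1]$ (it is, per the paper's definition), so that indicator functions are admissible test functions; no additional regularity or tightness hypotheses are needed beyond those already assumed in Theorem~\ref{thm:settoi}.
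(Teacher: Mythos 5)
Your proposal is correct and follows essentially the same route as the paper: the paper's proof simply cites the standard fact that total-variation convergence implies setwise convergence and then invokes Theorem \ref{thm:settoi}. The only difference is that you verify that standard fact directly via indicator test functions rather than citing it, which is a fine (and harmless) elaboration.
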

\begin{proof}
For a sequence of measures, convergence in total variation norm implies setwise convergence (see the discussion on page 291 of \cite{bogachev2006a}). This fact implies the result of the corollary.
\end{proof}

We can use Theorem \ref{thm:settoi} to conclude the following result for measures over countable discrete spaces.

\begin{theorem}
Let $\ALP X$ and $\ALP Y$ be countable spaces, each of which is endowed with the discrete metric. Let $\{\mu_n\}_{n\in\Na}\subset\wp_w(\ALP X\times\ALP Y)$ be a weak-* convergent sequence of measures, converging to $\mu$. Then, the sequence $\{\mu_n\}_{n\in\Na}$ converges to $\mu$ in $\wp_I(\ALP X\times\ALP Y)$.  
\end{theorem}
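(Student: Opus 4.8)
The plan is to reduce this to Theorem~\ref{thm:settoi} by showing that, on a countable space with the discrete metric, weak-* convergence of probability measures to a probability measure is equivalent to (indeed a little weaker than) setwise convergence. First I would observe that $\ALP X\times\ALP Y$ is itself a countable space with the discrete metric, hence a Polish space in which every singleton $\{(x,y)\}$ is clopen; consequently the indicator function $\IND_{\{(x,y)\}}$ lies in $C_b(\ALP X\times\ALP Y)$ for every $(x,y)$. Since $\mu_n\ws\mu$, testing against these indicators gives $\mu_n(\{(x,y)\})\to\mu(\{(x,y)\})$ as $n\to\infty$ for each $(x,y)\in\ALP X\times\ALP Y$.

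Next I would upgrade this pointwise convergence of the atoms to setwise convergence. Writing $p_n(x,y):=\mu_n(\{(x,y)\})$ and $p(x,y):=\mu(\{(x,y)\})$, these are nonnegative summable arrays with $\sum_{(x,y)} p_n(x,y)=\sum_{(x,y)} p(x,y)=1$ and $p_n\to p$ pointwise; by Scheff\'e's lemma, $\sum_{(x,y)}|p_n(x,y)-p(x,y)|\to 0$. Hence for every measurable $A\subset\ALP X\times\ALP Y$ we have $|\mu_n(A)-\mu(A)|\le\sum_{(x,y)}|p_n(x,y)-p(x,y)|\to 0$, i.e., $\mu_n\to\mu$ setwise (in fact in total variation norm).

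Finally, I would apply Theorem~\ref{thm:settoi} with the countable discrete space $\ALP X$ in the role of the first factor and the countable discrete space $\ALP Y$---which is a Polish space---in the role of the second factor. Since $\{\mu_n\}_{n\in\Na}$ converges setwise to $\mu$ on $\ALP X\times\ALP Y$ by the previous step, Theorem~\ref{thm:settoi} yields that $\mu_n\to\mu$ in $\wp_I(\ALP X\times\ALP Y)$, as desired. I do not anticipate a genuine obstacle; the only point requiring a little care is the passage from weak-* convergence to setwise convergence, where one must invoke Scheff\'e's lemma (equivalently, use that pointwise convergence of the atoms to the atoms of a probability measure forces $\ell^1$-convergence) rather than merely applying dominated convergence term by term.
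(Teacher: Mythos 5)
Your proof is correct and follows essentially the same route as the paper: reduce to Theorem~\ref{thm:settoi} by showing that on a countable discrete space weak-* convergence implies setwise (indeed total variation) convergence. Your use of Scheff\'e's lemma actually makes explicit a step the paper only asserts, so the argument is, if anything, more carefully justified than the published version.
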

\begin{proof}
Note that both $\ALP X$ and $\ALP Y$ are Polish spaces. Furthermore, the space of measurable functions and the space of continuous functions over discrete countable spaces are the same. Since $\mu_n\ws\mu$, we know from that $\mu_n$ converges to $\mu$ in total variation norm, which further implies that $\mu_n$ converges to $\mu$ setwise, as $n\rightarrow\infty$. The result then follows from Theorem \ref{thm:settoi}.
\end{proof}
The assumption of countable spaces in the above theorem is part of the conditions on a convergent sequence of measures considered in \cite{kajii1998}. Besides this assumption, they require additional assumptions motivated by game theoretic considerations, which makes their convergence concept stronger than weak-* convergence (or in this case equivalently setwise or norm convergence) on a discrete countable space. Their assumptions on the convergent sequence of measures imply that the sequence converges in the topology of information.

\subsection{Discussion}

The theorems we proved in this section show that topology of information is weaker that other well-known topologies under certain conditions on the Polish spaces or underlying distributions of random variables. However, for general Polish spaces, it is not clear as of now if convergence of a sequence of measures in total variation norm implies convergence of that sequence in the topology of information or vice-versa. It could be possible that the topology of information is stronger than the topology of convergence in total variation metric under certain conditions, but we have been unable to prove this or construct a counterexample. Thus, we leave the following question as a topic for further research: 

{\bf Open Problem 1:} Let $\ALP X$ and $\ALP Y$ be Polish spaces, and let $\wp_{TV}(\ALP X\times\ALP Y)$ denote the space of probability measures over $\ALP X\times\ALP Y$ with the topology induced by total variation norm. What is the relation between the topological spaces $\wp_I(\ALP X\times\ALP Y)$ and $\wp_{TV}(\ALP X\times\ALP Y)$? Does there exists a sequence $\{\mu_n\}_{n\in\Na}\subset\wp(\ALP X\times\ALP Y)$ that converges to $\mu$ in the metric induced by the total variation norm, but does not converge to $\mu$ in the topology of information?{\hfill $\Box$}

We showed that a sequence of measures converging in the topology of information preserves conditional independence property in the limit. A natural question to ask would be if this is also necessary, that is, if a sequence of measures converging under some topology preserves conditional independence property in the limit, then does it also converge in the topology of information? In other words, the following problem is also interesting in its own right:

{\bf Open Problem 2:} Let $\ALP X$, $\ALP Y$, $\ALP Z$ be Polish spaces and let $\{\mu_n\}_{n\in\Na}\subset\wp(\ALP X\times\ALP Y\times\ALP Z)$ be a sequence of measures. Assume that (i) $\mu_n\rightarrow\mu$ for some measure $\mu$ as $n\rightarrow\infty$ in some topology, (ii) $\mu_n(dy,dz|x) = \mu_n(dy|x)\mu_n(dz|x)$ for $\mu_n$ almost every $x$, and (iii) $\mu(dy,dz|x) = \mu(dy|x)\mu(dz|x)$ for $\mu$ almost every $x$. Does either of the following holds: (i) $\{\mu_n^{\ALP X\times\ALP Y}\}_{n\in\Na}$ converges to $\mu^{\ALP X\times\ALP Y}$ in the space $\wp_I(\ALP X\times\ALP Y)$, or (ii) $\{\mu_n^{\ALP X\times\ALP Z}\}_{n\in\Na}$ converges to $\mu^{\ALP X\times\ALP Z}$ in the space $\wp_I(\ALP X\times\ALP Z)$?{\hfill$\Box$}

\section{Conclusion}\label{sec:conclusion}
In this paper, we studied the topology of information on the space of measures over Polish spaces. We showed, through examples, that the weak-* topology and the topology of setwise convergence are weaker notions of topology than the topology of information for probability measures over general Polish spaces. We also determined conditions under which weak-* convergence or setwise convergence of a sequence of measures implied convergence in the topology of information.

This topology is useful in game or optimization problems that feature informational constraints among the decision makers or causality constraints in the decision making process. For applications, we refer the reader to the papers listed in the introduction. In particular, \citet{hellwig1996} uses topology of information explicitly in showing the existence of optimal solution in an infinite horizon decision problem, while \cite{milgrom1985, jordan1977, kajii1998} assume specific conditions on the underlying spaces, sequences of measures, and/or the topology over measure spaces, to ensure that the limits satisfy information and causality constraints in the problem. 

Our results show that many notions of convergence used in this literature are a special case of convergence in the topology of information and we believe that the results of this paper will be helpful in unifying and extending conditions for existence and continuity of optimal solution or Nash equilibrium strategies in such problems. Recently, \citet{yuksel2012} considered one-person optimization of observation channels in dynamic decision problems. The technical difficulty in \citet{yuksel2012} arose partly due to the fact that the actions of a decision maker affected future states of the world, but the decision maker does not recall the past observations. It will be interesting to consider such dynamic decision making problems (for example, Markov decision problems) with or without memory, and use topology of information to identify conditions that guarantee existence of optimal decision rules.

\appendix

\section{Proof of Lemma \ref{lem:aux1}}\label{app:aux1}
Part 1 of the lemma follows directly from the definition. We prove Part 2 and 3 of the lemma here.
\begin{enumerate}
\item[2.] Let $\epsilon>0$. Since $\tilde{\ALP M}$ is a tight set of measure pairs, without loss of generality, we can assume that there exist compact sets $K_1\subset\ALP X, K_2\subset\ALP Y, K_3\subset\ALP Z$ such that
\beqq{\mu\left((K_1\times K_2)^\complement\right)<\epsilon,\quad\nu\left((K_2\times K_3)^\complement\right)<\epsilon.}
By Part 1 of the lemma, we know that $\mu(K_1\times K_2) = \chi_1(\mu,\nu)\left(K_1\times K_2\times \ALP Z\right)$. Moreover, note that since $\mu(K_1|y)\leq 1$ for all $b\in\ALP Y$, we get
\beqq{\chi_1(\mu,\nu)\left(K_1\times K_2\times K_3^\complement\right) = \int_{K_2\times K_3^\complement} \mu(K_1|y)\nu(dy,dc)\leq \nu\left((K_2\times K_3)^\complement\right)<\epsilon.}
Thus, using the two equations above, we get
\beqq{\chi_1(\mu,\nu)\left(K_1\times K_2\times K_3\right) = \mu(K_1\times K_2) - \chi_1(\mu,\nu)\left(K_1\times K_2\times K_3^\complement\right)> 1-2\epsilon,}
which proves Part 2 of the lemma. 

\item[3.]  Assume, contrary to the claim, that the sequence $\{\chi_1(\mu_{n},\nu_{n})\}_{n\in\Na}$ does not converge to $\chi_1(\mu_{0},\nu_{0})$ as $n\rightarrow\infty$. Since $\{\chi_1(\mu_{n},\nu_{n})\}_{n\in\Na}$ does not converge to $\chi_1(\mu_{0},\nu_{0})$, there exists an $\varepsilon_0>0$ and a subsequence $\{\chi_1(\mu_{n_l},\nu_{n_l})\}_{l\in\Na}$ such that
\beq{\label{eqn:rhoxyz}\rho_{\wp_w(\ALP X\times\ALP Y\times\ALP Z)}(\chi_1(\mu_{n_l},\nu_{n_l}),\chi_1(\mu_{0},\nu_{0}))>\varepsilon_0,}
where $\rho_{\wp_w(\ALP X\times\ALP Y\times\ALP Z)}(\cdot,\cdot)$ is the distance between the two measures under Prohorov's metric over the space $\wp_w(\ALP X\times\ALP Y\times\ALP Z)$. By Part 2 of the lemma, we can conclude that $\{\chi_1(\mu_{n},\nu_{n})\}_{n\in\Na}$ is a tight set of measures, which implies that for the subsequence $\{\chi_1(\mu_{n_l},\nu_{n_l})\}_{l\in\Na}$, there exists a further subsequence $\{(\mu_{n_{l_k}},\nu_{n_{l_k}})\}_{k\in\Na}$ such that $\chi_1(\mu_{n_{l_k}},\nu_{n_{l_k}})$ converges to some measure $\lambda_0\in\wp(\ALP X\times\ALP Y\times\ALP X)$. 

We now show that $\lambda_0 = \chi_1(\mu_{0},\nu_{0})$. First, note that 
\beqq{\lambda_0^{\ALP X\times\ALP Y} = \lf{k}(\chi_1(\mu_{n_{l_k}},\nu_{n_{l_k}}))^{\ALP X\times\ALP Y} = \lf{k}\mu_{n_{l_k}} = \mu_0,}
which follows from Lemma \ref{lem:marg}. Similarly, $\lambda_0^{\ALP Y\times\ALP Z} = \nu_0$. Now, since $\mu_0$ is induced by a Borel measurable function, applying Lemma \ref{lem:glue}, we conclude that $\lambda_0$ is the unique probability measure that glues the two probability measures $\mu_0$ and $\nu_0$, which implies $\lambda_0 = \chi_1(\mu_{0},\nu_{0})$. However, this contradicts \eqref{eqn:rhoxyz}. 

Thus, our assumption that the sequence $\{\chi_1(\mu_{n},\nu_{n})\}_{n\in\Na}$ does not converge to $\chi_1(\mu_{0},\nu_{0})$ as $n\rightarrow\infty$ is false. This completes the proof of the lemma.
\end{enumerate}

\section{Proof of Lemma \ref{lem:aux2}}\label{app:aux2}
\begin{enumerate}
\item Let $\Pi_{\ALP X}$ be the projection map. For all $D\in \mathcal{B}(\ALP X\times \ALP Y)$, note that 
\begin{align}\label{eqn:chi}
\chi_2(\nu)(D)=\int_{\Pi_{\ALP X}(D)}\int_{\wp_w(Y)} \zeta(D_x) \nu(d\zeta,dx),
\end{align}
is the unique measure defined by \eqref{eqn:bargxy}, where $D_x$ is the $\ALP Y$-section of the set $D$ at point $x$. First, we prove that for any $D\in\FLD B(\ALP X\times\ALP Y)$, the map $h_D:\wp_w(\ALP Y)\times\ALP X\rightarrow\Re$, defined by $h_D(\zeta,x) = \zeta(D_x)$, is $\mathcal{B}(\ALP X \times \wp_w(\ALP Y))$-measurable. To see this, define 
$\FLD D\subset\mathcal{B}(\ALP X\times\ALP Y)$ as follows:  
\begin{align}
\mathcal{D}=\lbrace D\in \mathcal{B}(\ALP X\times\ALP Y) \text{ such that }\hspace{1mm} \zeta(D_x) \hspace{1mm}\text{ is } \hspace{1mm}\mathcal{B}(\ALP X \times \wp_w(\ALP Y))-\text{measurable } \rbrace
\end{align}
Since $\ALP X$ and $\ALP Y$ are Polish spaces, the space $\ALP X$ resp. $\ALP Y$ have countable bases of open sets $\mathcal{O}$ resp. $\mathcal{U}$. We now show that $\mathcal{D}$ is a Dynkin-system that contains $\FLD C:=\{O\times U: O\in \mathcal{O}, U\in\mathcal{U}\}$. For a set $O\times U\in \FLD C$, we have to show that the following map 
\beqq{h(\zeta,x) = \left\{\begin{array}{cl} \zeta(U) & \text{if }x\in O\\ 0 & \text{otherwise} \end{array}\right.}
is $\mathcal{B}(\ALP X \times \wp_w(\ALP Y))$-measurable. This follows since $\lbrace (x,\zeta)\vert h(\zeta,x)\geqslant \alpha\rbrace$ is equal to $\ALP X\times \wp_w(\ALP Y)$ for $\alpha=0$ and equal to $O\times \lbrace \zeta\in\wp_w(\ALP Y) \text{ s.t. } \zeta(U)\geqslant \alpha\rbrace$ for $\alpha>0$. Since $U$ is an open set in $\ALP Y$, by Portmonteau theorem \citet{billing1968}, the map $\zeta\mapsto\zeta(U)$, as a function from $\wp_w(\ALP Y)$ to $\Re$, is upper-semicontinuous, and therefore measurable. So $\lbrace \zeta\in\wp_w(\ALP Y) \text{ s.t. } \zeta(U)\geqslant \alpha\rbrace \in \FLD B(\wp_w(\ALP Y))$. This shows the measurability of $h$. Since $\FLD C$ is closed under finite intersection and generates $\mathcal{B}(\ALP X\times\ALP Y)$, by Dynkin's lemma (see Theorem 2.4 in \citet{bauer2001}), $\mathcal{D}=\mathcal{B}(\ALP X\times \ALP Y)$. This implies that $h_D$ is measurable for any $D\in\FLD B(\ALP X\times\ALP Y)$, which further implies that \eqref{eqn:chi} is well defined. 

Further, \eqref{eqn:chi} is easily seen to define a probability measure, and for $D\in \FLD C$ \eqref{eqn:chi} reduces to \eqref{eqn:chi2}. By Theorem 5.4 in \citet{bauer2001}, the measure defined by \eqref{eqn:chi} is the unique extension of \eqref{eqn:chi2}. 

To see that \eqref{eqn:bargxy} holds, note that $\bar{g}(x,\zeta)$ is for $g(x,y)=\IND_D$ for $D\in \mathcal{B}(\ALP X\times \ALP Y)$ equal to $\zeta(D_x)$, so that \eqref{eqn:chi} is just \eqref{eqn:bargxy} for indicator functions. By standard arguments, \eqref{eqn:bargxy} holds for all simple functions, and therefore, holds for all bounded measurable functions from $\ALP X\times\ALP Y$ to $\Re$.

\item Let $\lbrace x_n,\zeta_n\rbrace_{n\in\Na}$ converge to $(x,\zeta)$. Since $\zeta_n\ws\zeta$, Prohorov's theorem implies that for every $\varepsilon>0$, there exists a compact set $K_{\varepsilon}\subseteq \ALP Y$ such that $\zeta_n(K_{\varepsilon})\geqslant 1-\varepsilon$ for all $n\in\Na$. Now we have
\begin{align*}
  &\left\vert \int_{\ALP Y}g(x_n,y) \zeta_n(dy)-\int_{\ALP Y}g(x,y) \zeta(dy)\right\vert \\
  &\leq \left\vert \int_{\ALP Y}g(x_n,y) \zeta_n(dy)-\int_{\ALP Y}g(x,y) \zeta_n(dy)\right\vert + \left\vert \int_{\ALP Y}g(x,y) \zeta_n(dy)-\int_{\ALP Y}g(x,y) \zeta(dy)\right\vert
\end{align*}
The second integral converges to zero as $n\rightarrow\infty$. For the first integral, we have
\begin{align*}
   \left\vert \int_{\ALP Y}g(x_n,y) \zeta_n(dy)-\int_{\ALP Y}g(x,y) \zeta_n(dy)\right\vert \leq \int_{K_{\varepsilon}}\vert g(x_n,y)-g(x,y)\vert \zeta_n(dy) +\int_{K^\complement_{\varepsilon}} \vert g(x_n,y)-g(x,y)\vert \zeta_n(dy)
\end{align*}
We now show that both integrands in the right side of the equation above converge to $0$ as $n\rightarrow\infty$. On the compact set $\Big(\{x_n\}_{n\in\Na}\bigcup \{ x\} \Big)\times K_{\varepsilon}$, the function $g$ is uniformly continuous\footnote{See Lemma 9.5 in \citet{stokey1989} for a similar argument}. Thus, for any $\kappa>0$, there exists a natural number $N_1\in\Na$ such that $\vert g(x_n,y)-g(x,y)\vert<\kappa$ for all $n\geq N_1$ and $y\in K_{\varepsilon}$. This implies that the first integrand converges to zero as $n\rightarrow\infty$. Also, $\vert g(x_n,y)-g(x,y)\vert$ is bounded as $g$ is bounded. Further, $\zeta_n(K^\complement_{\varepsilon}) < \varepsilon$ for all $n\in\Na$ implies that the second integrand can be made arbitrarily small (as $\varepsilon$ can be chosen arbitrary) for sufficiently large $n$. Thus, the second integrand converges to zero as $n\rightarrow\infty$. Boundedness is immediate given the boundedness of $g$. This proves the result.

\item Let $\{\nu_n\}_{n\in\Na}\subset\wp_w(\ALP X\times\wp_w(\ALP Y))$ be a weak-* convergent sequence with the limit $\nu_0$. Define $\mu_n:=\chi_2(\nu_n)$ for $n\in\Na\cup\{0\}$. We want to show that for any $g\in C_b(\ALP X\times\ALP Y)$,
\beqq{\lim_{n\rightarrow\infty}\int_{\ALP X\times\ALP Y} g(x,y)\mu_n(dx,dy) = \int_{\ALP X\times\ALP Y} g(x,y)\mu_0(dx,dy), }
which would imply that $\chi_2$ is continuous. In order to prove this, let us define $\bar{g}$ as $\bar{g}(x,\zeta) = \int_{\ALP Y} g(x,y)\zeta(dy)$. Part 2 of the lemma implies $\bar{g}\in C_b(\ALP X\times\wp(\ALP Y))$, which further implies
\beqq{\int_{\ALP X\times\ALP Y} g(x,y)\mu_n(dx,dy) = \int_{\ALP X\times\wp_w(\ALP Y)} \bar{g}\:d\nu_n\underset{n\rightarrow\infty}{\longrightarrow} \int_{\ALP X\times\wp_w(\ALP Y)} \bar{g}\:d\nu_0 = \int_{\ALP X\times\ALP Y} g(x,y)\mu_0(dx,dy),}
where equalities hold due to Part 1 of the lemma. This completes the proof of third part of the lemma.
\end{enumerate}

\section{Proof of Theorem \ref{thm:wstoi} }\label{app:wstoi}
To prove this, recall that convergence in the topology of information means that $\psi(\mu_n)$ converges to $\psi(\mu)$ as $n\rightarrow\infty$ in the weak-* topology. Let $g$ be a bounded and continuous function from $\ALP X\times \wp_w(\ALP Y)$ to the real numbers. Thus, we want to show that as $n\rightarrow\infty$
\beq{\label{eqn:convI} \int_{\ALP X\times \wp_w(\ALP Y)} g(x,\nu) d\psi(\mu_n)\longrightarrow  \int_{\ALP X\times \wp_w(\ALP Y)} g(x,\nu) d\psi(\mu).}
We now prove the \eqref{eqn:convI} in three steps. 

{\it Step 1:} We claim that  
\beq{\label{eqn:condq} \mu(B\vert x)=\int_B f(x,y) d\mu^{\ALP Y}\quad  \text{ for all } B \in \FLD B(\ALP Y)\quad \mu^{\ALP X}\text{-almost surely}.}
Let  $B\in\FLD B(\ALP Y)$. For any $A\in\FLD B(\ALP X)$, we have
\beqq{\int_{A}\mu(B|x)\mu^{\ALP X}(dx) = \mu(A\times B) = \int_{A}\left( \int_{B} f(x,y)\mu^{\ALP Y}(dy) \right)\mu^{\ALP X}(dx),}
which implies \eqref{eqn:condq}. A similar statement is true for $\mu_n$ and $f_n$ for any $n\in\Na$. For every $x\in\ALP X$ and $n\in\Na$, define $\nu^x_n\in\wp(\ALP Y)$ and $\nu^x\in\wp(\ALP Y)$ as
\beqq{\nu^x_n(\cdot):=\int_{(\cdot)} f_n(x,y)d\mu^{\ALP Y}_n,\qquad \nu^x(\cdot) = \int_{(\cdot)} f(x,y)d\mu^{\ALP Y}.}
Then, $\nu^x_n$ and $\nu^x$, respectively, are precisely the conditional measures $\mu_n(\cdot|x)$ and $\mu(\cdot|x)$ at $x\in\ALP X$ for $n\in\Na$.

{\it Step 2:} For any sequence $\lbrace x_n\rbrace_{n\in\Na}$ converging to $x$, we now show that the sequence of probability measures $\{\nu_n^{x_n}\}_{n\in\Na}$ converges to the probability measure $\nu^x$ as $n\rightarrow\infty$ in the weak-* topology. Note that on a locally compact space (as $\ALP Y$), weak-* convergence of a sequence of probability measures holds if and only if vague convergence of the sequence of measures holds \citet[Corollary 30.9, p. 197]{bauer2001}\footnote{See \citet[Chapter 30]{bauer2001} for more details on vague convergence and its relation to weak-* convergence of a sequence of measures over a locally compact Polish space}. Thus, we now prove that $\{\nu_n^{x_n}\}_{n\in\Na}$ converges to $\nu^x$ as $n\rightarrow\infty$ in vague topology.

Let $q:\ALP Y\rightarrow\mathbb{R}$ be bounded continuous function with a compact support denoted by $K_q\subset\ALP Y$. The sequence $\{\nu_n^{x_n}\}_{n\in\Na}$ converges vaguely to $\nu^x$ if and only if
\beqq{\int_\ALP Y q(y)f_n(x_n,y)d\mu^{\ALP Y}_n\longrightarrow \int_\ALP Y q(y)f(x,y)d\mu^{\ALP Y}\qquad \text{ as }n\rightarrow\infty.}
In order to prove that the equation above holds, note that
\beq{\left\vert \int_\ALP Y q(y)f_n(x_n,y)d\mu^{\ALP Y}_n-\int_\ALP Y q(y)f(x,y)d\mu^{\ALP Y}\right\vert & \leq & \left\vert \int_\ALP Y q(y)f_n(x_n,y)d\mu^{\ALP Y}_n-\int_\ALP Y q(y)f(x,y)d\mu^{\ALP Y}_n\right\vert\nonumber\\
 & & \hspace{3mm}+\left\vert \int_\ALP Y q(y)f(x,y)d\mu^{\ALP Y}_n-\int_\ALP Y q(y)f(x,y)d\mu^{\ALP Y}\right\vert.\label{eqn:qfnf}}
The last summand in \eqref{eqn:qfnf} converges to zero as $n\rightarrow\infty$, because for a fixed $x\in\ALP X$, $q(y)f(x,y)$ is bounded continuous function on the compact set  $K_q$ and $\mu^{\ALP Y}_n\ws\mu^{\ALP Y}$. We now show that the first summand on the right side of \eqref{eqn:qfnf} converges to zero as $n\rightarrow\infty$. By triangle inequality, we have
\begin{align*}
   \left\vert f_n(x_n,y)-f(x,y)\right\vert\leqslant \left\vert f_n(x_n,y)-f(x_n,y)\right\vert+\left\vert f(x_n,y)-f(x,y)\right\vert
\end{align*}
By assumption, $f_n$ converges uniformly to $f$ on compact sets. Also, since $f$ is continuous function, $f$ is uniformly continuous on the compact set $\left(\{x_n\}_{n\in\Na}\cup\{x\}\right)\times K_q$\footnote{See Lemma 9.5 in \citet{stokey1989} for a similar argument.}. Using these two facts, for any given $\varepsilon>0$ we can find a $N\in\Na$ such that for any $n>N$ and for all $y\in K_q$, we have
\beqq{\left\vert f_n(x_n,y)-f(x_n,y)\right\vert<\frac{\varepsilon}{2} \hspace{4mm}\text{and}\hspace{4mm} \left\vert f(x_n,y)-f(x,y)\right\vert<\frac{\varepsilon}{2}.}
Since $q$ is bounded and $\{\mu^{\ALP Y}_n\}_{n\in\Na}$ is a set of probability measures, the first summand on the right side of \eqref{eqn:qfnf} converges to zero as $n\rightarrow\infty$. This implies that for any sequence $\{x_n\}_{n\in\Na}$ converging to $x$, the sequence of measures $\{\nu^{x_n}_n\}_{n\in\Na}$ converges to $\nu^x$ in the weak-* topology. The second step of the proof is complete.

{\it Step 3:} Let $k_n(x):=g(x,\nu^x_n)$ and $k(x):=g(x,\nu^x)$. Using the result of Step 2, we know that for any sequence $\lbrace x_n\rbrace$ converging to $x$, we get $(x_n,\nu^{x_n}_n)\longrightarrow (x,\nu^x)$. This implies the desired property
\begin{align*}
   \int k_n(x) d\mu^{\ALP X}_n\longrightarrow \int k(x) d\mu^{\ALP X} \qquad\text{ as } n\rightarrow\infty
\end{align*}
by \cite[Theorem 5.5, p. 34]{billing1968} or \citet[Theorem 8.4.1 (iii), p. 195]{bogachev2006b}. The above equation is precisely \eqref{eqn:convI}, which completes the proof of the theorem.

\section{Proof of Lemma \ref{lem:cl1}}\label{app:cl1}
Let us define $\FLD D$ to be the set of all $A\in \FLD B_{\left[ 0,1\right[}$ for which \eqref{rademacher} holds. We show that $\FLD D$ form a Dynkin system. Clearly, $\left[ 0,1\right[$ satisfies (\ref{rademacher}), which implies $\left[ 0,1\right[\in\FLD D$. We show $\FLD D$ form a Dynkin system using two steps. 

{\it Step 1:} Let $\{A_m\}_{m\in\Na}\subset\FLD D$ be a mutually disjoint sequence of sets. We want to show that $\cup_{m=1}^{\infty} A_m\in\FLD D$. For every $n\in\Na$, we have
\begin{align*}
   \lambda\left( \left( \cup_{m=1}^\infty A_m\right) \bigcap F_n^{-1}(1)\right) =\sum_{m=1}^\infty \lambda\left( A_m\cap F_n^{-1}(1)\right) 
\end{align*}
Recall that since $A_m\in\FLD D$, $\frac{1}{2}\lambda(A_m) = \lf{n}\lambda(A_m\cap F_n^{-1}(1))$. Since $\lambda(A_m) = \lambda(A_m\cap F_n^{-1}(1))+\lambda(A_m\cap F_n^{-1}(0))$ for any $n\in\Na$, we get $\frac{1}{2}\lambda(A_m) = \lf{n}\lambda(A_m\cap F_n^{-1}(0))$. By taking the counting measure on natural numbers, we apply Fatou's lemma to get
\beqq{\frac{1}{2}\sum_{m=1}^\infty  \lambda(A_m)\leqslant \liminf\limits_{n\rightarrow \infty} \sum_{m=1}^\infty \lambda\left( A_m\cap F_n^{-1}(1)\right) \leqslant \limsup\limits_{n\rightarrow \infty} \sum_{m=1}^\infty \lambda\left( A_m\cap F_n^{-1}(1)\right),\\
\frac{1}{2} \sum_{m=1}^\infty\lambda(A_m)\leqslant \liminf\limits_{n\rightarrow \infty} \sum_{m=1}^\infty \lambda\left( A_m\cap F_n^{-1}(0)\right)\leqslant \limsup\limits_{n\rightarrow \infty} \sum_{m=1}^\infty \lambda\left( A_m\cap F_n^{-1}(0)\right).}
We also have $\sum_{m=1}^\infty \lambda\left( A_m\cap F_n^{-1}(1)\right)=\sum_{m=1}^\infty \lambda\left( A_m\right) -\sum_{m=1}^\infty \lambda\left( A_m\cap F_n^{-1}(0)\right)$, which further implies
\beqq{\limsup\limits_{n\rightarrow \infty}\sum_{m=1}^\infty \lambda\left( A_m\cap F_n^{-1}(1)\right)=\sum_{m=1}^\infty \lambda\left( A_m\right) -\liminf\limits_{n\rightarrow \infty}\sum_{m=1}^\infty \lambda\left( A_m\cap F_n^{-1}(0)\right)\leq\frac{1}{2}\sum_{m=1}^\infty  \lambda(A_m). }
Inequalities proved above imply $ \lambda\left( \left( \cup_{m=1}^\infty A_m\right) \bigcap F_n^{-1}(1)\right)\rightarrow \frac{1}{2}\lambda\left( \cup_{m=1}^{\infty} A_m\right)$ as $n\rightarrow\infty$, that is,
\beqq{\lambda_n\left( \cup_{m=1}^{\infty} A_m\right)\rightarrow \lambda\left( \cup_{m=1}^{\infty} A_m\right)\quad \text{ as }n\rightarrow\infty.}
Thus, $\cup_{m=1}^{\infty} A_m$ belongs to $\FLD D$.

{\it Step 2:} Note that $\frac{1}{2}=\lambda(F_n^{-1}(1))=\lambda(A\cap F_n^{-1}(1))+\lambda(A^\complement\cap F_n^{-1}(1))$. Since $\lambda(A\cap F_n^{-1}(1))\rightarrow \frac{1}{2}\lambda(A)$, we get $\lambda(A^\complement\cap F_n^{-1}(1))\rightarrow \frac{1}{2}\lambda(A^\complement)$. This completes the second step.

Steps 1 and 2, along with the fact that $\left[ 0,1\right[\in\FLD D$, proves that $\FLD D$ is a Dynkin system.

Now, we show that $\FLD D = \FLD B_{\left[ 0,1\right[} $. Since for $m\in \mathbb{N}$ and each $k\in \lbrace 1,...,2^{m}-1\rbrace$, the sets of the form $\left[ \frac{k}{2^m},\frac{k+1}{2^m}\right[$ belong to $\mathcal{D}$ and are intersection stable, the Dynkin system $\mathcal{D}$ is equal to the $\sigma$-algebra generated by $\left[ \frac{k}{2^m},\frac{k+1}{2^m}\right[,\: k\in \lbrace 1,...,2^{m}-1\rbrace,\: m\in\Na$, which is the Borel $\sigma$-algebra $\FLD B_{\left[ 0,1\right[}$. This shows that $\FLD D =  \FLD B_{\left[ 0,1\right[ }$. Thus, $\lambda_n$ converges to $\lambda$ setwise as $n\rightarrow\infty$.

\section{Proof of Theorem \ref{thm:settoi}}\label{app:settoi}
Let $g$ a bounded and uniformly continuous function on $\ALP X\times \wp_w(\ALP Y)$ (where $\ALP X\times\wp_w(\ALP Y)$ is endowed with the product metric). For any $\mu_n$ and $x\in\ALP X$, define $\nu^x_n\in\wp(\ALP Y)$ as $\nu^x_n(\cdot):=\mu_n(\cdot|x)$. Similarly, define $\nu^x\in\wp(\ALP Y)$ as $\nu^x(\cdot):=\mu(\cdot|x)$. By the definition of the convergence in the topology of information as weak-* convergence of the measure $\psi(\mu_n)$ to $\psi(\mu)$ and by Exercise 10 on p.203 in \citet{bauer2001}, it suffices to show that
\begin{align*}
  \left\vert \sum_{x\in \ALP X} g(x,\nu^{x}) \mu^{\ALP X}(x)-\sum_{x\in \ALP X} g(x,\nu^{x}_n) \mu^{\ALP X}_n(x)\right\vert\longrightarrow 0
\end{align*}
where $\mu^{\ALP X}$ denotes the marginal distribution on $\ALP X$. Towards this end, note that
\beq{& &\left\vert \sum_{x\in \ALP X} g(x,\nu^{x}) \mu^{\ALP X}(x)-\sum_{x\in \ALP X} g(x,\nu^{x}_n) \mu^{\ALP X}_n(x)\right\vert \nonumber\\
 &&  \leq \left\vert \sum_{x\in \ALP X} g(x,\nu^{x}) \mu^{\ALP X}(x)-\sum_{x\in \ALP X} g(x,\nu^{x}) \mu^{\ALP X}_n(x)\right\vert+ \sum_{x\in \ALP X} \Big\vert g(x,\nu^{x})-g(x,\nu^{x}_n) \Big\vert \mu^{\ALP X}_n(x).\label{eqn:setxy}}
We show that the two summands in the equation above converges to $0$ as $n\rightarrow\infty$ in two steps. 

{\it Step 1:} Note that $g(x,\nu^{x})$ is a bounded measurable function on $\ALP X$. Furthermore, $\mu^{\ALP X}_n$ converges setwise to $\mu^{\ALP X}$. By Exercise 11.2 in \citet{stokey1989} we then have that
\begin{align*}
   \left\vert \sum_{x\in \ALP X} g(x,\nu^{x}) \mu^{\ALP X}(x)-\sum_{x\in \ALP X} g(x,\nu^{x}) \mu^{\ALP X}_n(x)\right\vert \underset{n\rightarrow \infty}{\longrightarrow} 0.
\end{align*}
Thus, the first summand in \eqref{eqn:setxy} converges to $0$ as $n\rightarrow\infty$.

{\it Step 2:} The support of $\mu^{\ALP X}$, denote $S_{\mu}\subset\ALP X$ is given by
\beqq{ S_{\mu}:=\big\{x\in\ALP X:\mu^{\ALP X}(x)>0\big\}.}
Since $\mu^{\ALP X}$ is a probability measure on a discrete countable space, for each $\varepsilon>0$, there exists a finite set $F_{\varepsilon}\subseteq S_{\mu}$ such that $\mu^{\ALP X}(\ALP X\backslash F_{\varepsilon})<\varepsilon$. For any $x\in F_{\varepsilon}$, the regular conditional distribution $\mu(B\vert x)$ for any $B\in \FLD B(\ALP Y)$ is given by $\frac{\mu(B\cap \lbrace x\rbrace)}{\mu(\lbrace x\rbrace)}$. Since $\{\mu_n\}_{n\in\Na}$ converges setwise to $\mu$, for each $B\in \FLD B(\ALP Y)$, we get 
\begin{align*}
\nu^x_n(B) = \mu_n(B\vert x)=\frac{\mu_n(B\cap \lbrace x\rbrace)}{\mu_n(\lbrace x\rbrace)}\longrightarrow \frac{\mu(B\cap \lbrace x\rbrace)}{\mu(\lbrace x\rbrace)}=\mu(B\vert x) = \nu^x(B) \quad \text{ as } n\rightarrow\infty.
\end{align*}
So, $\{\nu^x_n\}_{n\in\Na}$ converges setwise to $\nu^x$ for every $x\in F_{\varepsilon}$, which further implies that $\{\nu^x_n\}_{n\in\Na}$ converges to $\nu^x$ in weak-* topology for every $x\in F_{\varepsilon}$. Next, we have
\beqq{\sum_{x\in \ALP X} \Big\vert g(x,\nu^{x})-g(x,\nu^{x}_n) \Big\vert \mu^{\ALP X}_n(x)=\sum_{x\in F_{\varepsilon}} \Big\vert g(x,\nu^{x})-g(x,\nu^{x}_n) \Big\vert \mu^{\ALP X}_n(x)+\sum_{x\in \ALP X\backslash F_{\varepsilon}} \Big\vert g(x,\nu^{x})-g(x,\nu^{x}_n) \Big\vert \mu^{\ALP X}_n(x).}
We now show that the first summand in the equation above converges to $0$ as $n\rightarrow\infty$. Fix $x\in F_{\varepsilon}$. Since $\nu^x_n\ws\nu^x$ and $g$ is uniformly continuous, for $\varepsilon>0$, there exists $N_x\in\Na$ such that for $n>N_x$, we have that $\left\vert g(x,\nu^{x})-g(x,\nu^{x}_n) \right\vert<\varepsilon$. Define $N_1:=\max_{x\in F_{\varepsilon}} N_x$, which is finite since  $F_{\varepsilon}$ is a finite set. Then, for any $n\geq N_1$, we have $\left\vert g(x,\nu^{x})-g(x,\nu^{x}_n) \right\vert<\varepsilon$ for all $x\in F_{\varepsilon}$.


Next, we prove the second summand in the equation above converges to $0$ as $n\rightarrow\infty$. Since $\mu^{\ALP X}_n$ converges setwise to $\mu^{\ALP X}$, There exists $N_2$ such that for any $n>N_2$, $\mu^{\ALP X}_n(\ALP X\backslash F_{\varepsilon})<2\varepsilon$. This implies
\beqq{\sum_{x\in \ALP X\backslash F_{\varepsilon}} \Big\vert g(x,\nu^{x})-g(x,\nu^{x}_n) \Big\vert \mu^{\ALP X}_n(x)<4\|g\|_{\infty} \varepsilon \quad \text{ for } n\geq N_2.}

Thus, we proved that for any $\varepsilon>0$, there exists $N:=\max \{N_1,N_2\}$ such that for any $n\geq N$, we have
\beqq{\sum_{x\in \ALP X} \Big\vert g(x,\nu^{x})-g(x,\nu^{x}_n) \Big\vert \mu^{\ALP X}_n(x)< (1+4\|g\|_{\infty})\varepsilon.}
This completes Step 2 of the proof.

Now, using the inequality in \eqref{eqn:setxy} and the results from Steps 1 and 2 above, we know that $\mu_n\rightarrow\mu$ in the topology of information, which completes the proof of the theorem.

\section{Jordan's Result as a corollary of Theorem \ref{thm:wstoi3}}\label{app:jordan}

\begin{remark}
\citet{hellwig1996} also considers the relation between the topology of information and the assumptions in \citet{jordan1977}. However, we have been unable to verify the remark on \citet[p. 449]{hellwig1996}, in which he claims the following: If $\{\mu_n\}_{n\in\Na}\subset\wp_w(\ALP X\times\ALP Y)$ is a weak-* convergent sequence of measures converging to $\mu$ such that the conditional measure $\mu_n(\cdot|x)$ is a continuous function from $\ALP X$ to $\wp_w(\ALP Y)$ for every $n\in\Na$, then $\mu_n\rightarrow\mu$ as $n\rightarrow\infty$ in the topology of  information. We provide a counterexample to this claim in Example \ref{exm:hellwigremark} below. {\hfill $\Box$}
\end{remark}


We now provide a counterexample to Hellwig's claim stated in the remark above, which adapts the example given on \citet[p. 1371]{jordan1977}.

\begin{example}\label{exm:hellwigremark}
Consider $\ALP X=\left\{ 2,2+\frac{1}{n},n\in \mathbb{N}\right\} $ and $\ALP Y=\left\{ 1,3\right\}$, where $\ALP X$ is endowed with the subspace topology of  the real line, and $\ALP Y$ is endowed with discrete topology. Define $\nu\in\wp(\ALP X\times\ALP Y)$ such that $\nu \left( \left\{ \left( 2,1\right) \right\} \right) =\nu \left( \left\{\left( 2,3\right) \right\} \right) =\frac{1}{2}$. Consider a sequence $\{\nu _{n}\}_{n\in\Na}\subset\wp(\ALP X\times\ALP Y)$ with $\nu_{n}\left( \left\{ \left( 2,1\right) \right\}\right) =\nu _{n}\left( \left\{ \left( 2+\frac{1}{n},3\right) \right\}\right) =\frac{1}{2}$. Therefore, $\nu_{n}$ converges to $\nu$ as $n\rightarrow\infty$ in the weak-* topology. For each $n$, the conditional distribution on $\ALP Y$ given $x$ is given by 
\beqq{\nu_{n}\left(1|2\right) =1, \quad \nu_{n}\left( 3\left| 2+\frac{1}{k}\right. \right) = 1 \text{ for } k=n.}
For $k\in\Na\setminus\{n\}$, let us assume that the conditional measures are given by
\beqq{\nu _{n}\left( 1\left| 2+\frac{1}{k}\right. \right) =1 \text{ for }k \neq n,\: k\in\Na.}
For fixed $n\in\Na$, the conditional distribution $\nu_{n}(\cdot|x)$ is a continuous function on $\ALP X$. Nevertheless, we have
\begin{equation*}
\psi\left( \nu _{n}\right) =\left\{ 
\begin{array}{cl}
\left( 2,\ind{1}\right) & \text{with probability }\frac{1}{2} \\ 
\left( 2+\frac{1}{n},\ind{3}\right) & \text{with probability }\frac{1}{2}
\end{array}
\right. ,\quad
\psi\left( \nu \right) =\ind{\left(2,\frac{1}{2}\ind{1}+\frac{1}{2}\ind{3}\right)}.
\end{equation*}
Now, consider a bounded continuous function $h:\wp_w(\ALP Y)\rightarrow \mathbb{R}$ such that $h(\ind{1})=h(\ind{3})=0$ and $h(\frac{1}{2}\ind{1}+\frac{1}{2}\ind{3})=1$\footnote{Since $\wp_w(\ALP Y)$ is a metric space, Urysohn's Lemma (\citet{willard2004}) implies that such a function exists.}. We have $\int_{\ALP X\times\wp_w(\ALP Y)} h(\zeta)d\psi(\nu_n)=0$ for all $n\in\Na$ and $\int_{\ALP X\times\wp_w(\ALP Y)} h(\zeta)d\psi(\nu)=1$. Thus, $\psi\left(\nu _{n}\right) $ does not converge to $\psi\left( \nu \right)$ in the weak-* topology. Hence, $\nu _{n}$ does not converge to $\nu $ in
the topology of information.{\hfill$\Box$}
\end{example}

To relate the topology of information to the assumptions of \citet{jordan1977}, one can instead argue as follows:
\citet{jordan1977} assumes that there is a first countable space $H$ and that there are continuous functions $\lambda : H\rightarrow \wp_w(\ALP X\times \ALP Y)$ and $\nu :H\times \ALP X\rightarrow \wp_w(\ALP Y)$\footnote{Note that this is more demanding than just to require that for each fixed $\eta \in H$, $\nu\left( .,\eta \right) :\ALP X\rightarrow \wp_{w}\left( \ALP Y\right) $ has a continuous version and that $\nu \left( \eta _{n}\right) $ converges to $\nu \left( \eta \right) $ in weak-* topology whenever $\eta _{n}\rightarrow \eta $ in $H$.} such that for each $\eta \in H$, $\nu(\eta,x)$ is the regular conditional distribution of $\lambda(\eta)$ given $x\in \ALP X$. Let $\{\eta_n\}_{n\in\Na}\subset H$ be a convergent sequence with limit $\eta$. In the proof of the Theorem \ref{thm:wstoi3}, define the functions $h_n:\ALP X\rightarrow\ALP X\times\wp_w(\ALP Y)$ and $h:\ALP X\rightarrow\ALP X\times\wp_w(\ALP Y)$ as $h_n(x)=(x,\nu(\eta_n,x))$ and $h(x)=(x,\nu(\eta,x))$ for $n\in\Na$. The continuity 
properties of the map $\nu$ imply that $h_n(x_n)\rightarrow h(x)$ in $\ALP X\times \wp_w(\ALP Y)$ whenever $x_n\rightarrow x$, which holds for all $x\in\ALP X$. Convergence of the sequence in the topology of information follows using the same steps as in the proof of Theorem \ref{thm:wstoi3}.

\section*{Acknowledgements}
This paper is in parts based on the working paper ``A note on the Hellwig maximum theorem for sequential decisions under uncertainty'' of the first author. The first author wants to thank Martin Hellwig for his extensive and extremly helpful comments, in particular for proposing him in a private communication the statement of the continuity result in Section \ref{sec:main}. The second author is thankful to Prof. Serdar Y\"{u}ksel and Prof. Tamer Ba\c{s}ar for several useful discussions about the topology of information on the space of measures. The authors are also thankful to the editor and the reviewer for their insightful comments. The work of second author was supported in part by the AFOSR MURI Grant FA9550-10-1-0573. 
 

\end{document}